\documentclass[a4paper,11pt]{amsart}

\usepackage{amssymb}
\usepackage{epsfig}
\usepackage{amsfonts}
\usepackage{amsmath}
\usepackage{euscript}
\usepackage{amscd}
\usepackage{amsthm}
\usepackage{tikz-cd}
\usepackage{mathtools}
\DeclareMathAlphabet{\mathpzc}{OT1}{pzc}{m}{it}
\usepackage[pagebackref,hypertexnames=false, colorlinks, citecolor=red,linkcolor=blue, urlcolor=red]{hyperref}
\usepackage{mathrsfs}

\textwidth 6.25in \textheight8.75in \oddsidemargin0.00in
\evensidemargin0.00in

\DeclareSymbolFont{SY}{U}{psy}{m}{n}
\DeclareMathSymbol{\emptyset}{\mathord}{SY}{'306}

\theoremstyle{plain}

\newtheorem{thm}{Theorem}[section]
\newtheorem{cor}[thm]{Corollary}
\newtheorem{lem}[thm]{Lemma}
\newtheorem{prop}[thm]{Proposition}
\theoremstyle{definition}
\newtheorem{defn}[thm]{Definition}
\newtheorem{rem}[thm]{Remark}

\numberwithin{equation}{section}
\def\A{{\mathcal A}}
\def\C{{\mathbb C}}
\def\D{{\mathbb D}}

\def\O{{\mathcal O}}
\def\H{{\mathcal H}}
\def\HM{{\mathscr M}}

\def\B{{\mathcal B}}
\def\U{{\mathcal U}}
\def\dbar{\bar\partial}
\def\norm#1{\left\|{#1}\right\|}
\def\N{\mathbb{N}}
\def\R{\mathbb{R}}

\def\J{\mathcal J}

\def\s{\psi}
\def\si{\sigma}

\def\l{\lambda}

\def\O{\Omega}
\def\f{\textbf{\textit{s}}}

\def\ra{\rightarrow}
\def\ov{\overline}

\def\a{\alpha}
\def\b{\beta}
\def\g{\gamma}
\def\G{\Gamma}
\def\e{\varepsilon}

\def\d{\delta}

\def\h{ hermitian holomorphic vector bundle}
\def\hl{ hermitian holomorphic line bundle}

\def\w{with respect to }
\def\del{\partial}
\def\T{\boldsymbol{T}}
\def\M{\boldsymbol{M}}
\def\rkhs{reproducing kernel Hilbert space }
\def\<{\langle}
\def\>{\rangle}
\def\Z{\mathcal{Z}}

\def\z{\zeta}
\def\bz{\boldsymbol{z}}
\def\bw{\boldsymbol{w}}
\def\aut{\text{Aut}}

\newcommand{\beq}{\begin{eqnarray}}
\newcommand{\eeq}{\end{eqnarray}}
\newcommand{\bea}{\begin{eqnarray}}
\newcommand{\eea}{\end{eqnarray}}
\newcommand{\Bea}{\begin{eqnarray*}}
\newcommand{\Eea}{\end{eqnarray*}}

\newcounter{cnt1}
\newcounter{cnt2}
\newcounter{cnt3}
\newcommand{\blr}{\begin{list}{$($\roman{cnt1}$)$}
 {\usecounter{cnt1} \setlength{\topsep}{0pt}
 \setlength{\itemsep}{0pt}}}
\newcommand{\bla}{\begin{list}{$($\alph{cnt2}$)$}
 {\usecounter{cnt2} \setlength{\topsep}{0pt}
 \setlength{\itemsep}{0pt}}}
\newcommand{\bln}{\begin{list}{$($\arabic{cnt3}$)$}
 {\usecounter{cnt3} \setlength{\topsep}{0pt}
 \setlength{\itemsep}{0pt}}}
\newcommand{\el}{\end{list}}

\begin{document}

\title [HOMOGENEOUS OPERATORS OBTAINED FROM QUOTIENT MODULES]{ON IRREDUCIBILITY OF A CERTAIN CLASS OF HOMOGENEOUS OPERATORS OBTAINED FROM QUOTIENT MODULES}
\author{SHIBANANDA BISWAS, PRAHLLAD DEB AND SUBRATA SHYAM ROY}

\thanks{This work is supported by Senior Research Fellowship funded by IISER Kolkata.}

\subjclass[2010]{46E22, 47b32, 47A65, 32Axx, 32Qxx, 55Rxx} 

\keywords{Hilbert modules, Quotient module, Cowen–Douglas operator, jet bundles, homogeneous operators, Hermitian connection and curvature}

\begin{abstract}
Let $\O \subset \C^m$ be an open, connected and bounded set and $\A(\O)$ be a function algebra of holomorphic functions on $\O$. Suppose that $\HM_q$ is the quotient Hilbert module obtained from a submodule of functions in a Hilbert module $\HM$ vanishing to order $k$ along a smooth irreducible complex analytic set $\Z\subset\O$ of codimension at least $2$. In this article, we prove that the compression of the multiplication operators onto $\HM_q$ is homogeneous \w a suitable subgroup of the automorphism group $\aut(\O)$ of $\O$ depending upon a subgroup $G$ of $\aut(\O)$ whenever the tuple of multiplication operators on $\HM$ is homogeneous \w $G$ and both $\HM$ as well as $\HM_q$ are in the Cowen-Douglas class. We show that these compression of multiplication operators might be reducible even if the tuple of multiplication operators on $\HM$ is irreducible by exhibiting a concrete example. Moreover, the irreducible components of these reducible operators are identified as \textit{Generalized Wilkins'} operators. 
\end{abstract}

\maketitle
\section{Introduction}\label{intro}

Let $\H$ be a complex separable Hilbert space, $\O \subset \C^m$ be a bounded domain and $G$ be a subgroup of the group $\aut(\O)$ of biholomorphic automorphisms of $\O$ acting on it transitively. Then an $m$-tuple of commuting bounded linear operators $\boldsymbol{T}=(T_1,\hdots,T_m)$ is said to be $G$-homogeneous if the Taylor joint spectrum of $\boldsymbol{T}$ is contained in $\ov{\O}$ and, for every $\boldsymbol{g}=(g_1,\hdots,g_m) \in G$ there exists a unitary operator $U_{\boldsymbol{g}}$ such that $g_j(\boldsymbol{T}) = U_{\boldsymbol{g}}^*T_jU_{\boldsymbol{g}}$, $1 \leq j \leq m$.

For $\H$ a {\rkhs} $\H_K$ with an $r \times r$ matrix valued reproducing kernel $K$ on  $\O$, there is a natural action of $G$ on the space of multiplication operators on $\H_K$, namely,
 $$\boldsymbol{g}\cdot\M_{f} := \M_{f \circ\boldsymbol{g}},$$
 for all $\boldsymbol{g}\in G$ and holomorphic functions $f$ on $\O$ such that the multiplication operator $\M_f$ is a bounded linear operator on $\H_K$. Then the $m$-tuple of multiplication operators $\M=(M_{z_1},\hdots,M_{z_m})$ by co-ordinate functions on $\H_K$ is said to be $G$-homogeneous if $\boldsymbol{g}\cdot\textbf{M} = \textbf{M}_{\textbf{g}}$ is unitarily equivalent to $\boldsymbol{M}$, for all $\boldsymbol{g} \in G$.

A prototypical family of examples of homogeneous operators is the multiplication operators by the co-ordinate function on the weighted Bergman space $\H^{(\l)}(\D)$ which are reproducing kernel Hilbert spaces of holomorphic functions on the open unit disc $\D$ with the reproducing kernel $K^{(\l)}(z,w)=(1-z\ov{w})^{-\l}$, $\l>0$. In fact, these are the only irreducible homogeneous operators in the Cowen-Douglas class $\mathrm B_1(\D)$ of rank $1$ over $\D$ as shown in  \cite{ACHOCD, CABSO}. Later, Wilkins in \cite{HVBCD} characterised all irreducible homogeneous operators in the Cowen-Douglas class $\mathrm B_2(\D)$ of rank $2$ over $\D$ by providing an explicit construction of such operators. According to Wilkins' construction, all irreducible homogeneous operators in $\mathrm B_2(\D)$ can be viewed as the compression of the multiplication operator $M^{(1)}\otimes I$ on $\H:=\H_{K_1}\otimes\H_{K_2}$ onto the quotient Hilbert space $\H_q$ obtained from the subspace consisting of functions in $\H$ vanishing of order $2$ along the diagonal subset $\{(z_1,z_2)\in\D^2:z_1=z_2\}$ of the open unit bi-disc $\D^2$ for some reproducing kernels $K_i$ on $\D$, $i=1,2$, such that the multiplication operators $M^{(i)}$, $i=1,2$, are homogeneous operators in $\mathrm B_1(\D)$. So in general, it is natural to ask if the compression of the tuple of multiplication operators $\M=(M_{z_1},\hdots,M_{z_m})$ by the co-ordinate functions on a reproducing kernel Hilbert module $\HM$ over $\A(\O)$ onto the quotient modules $\HM_q$ obtained from the submodule $\HM_0$ of functions in $\HM$ vanishing along a complex submanifold $\mathsf{Z}$ (see (i) in \ref{stdassumption} for definition) of higher order is homogeneous \w some group depending upon a subgroup $G$ of $\aut(\O)$, provided $\M$ is $G$-homogeneous. One of the main goals of this article is to study this question assuming that the pair of Hilbert modules $(\HM,\HM_q)$ is in $\mathrm B_{r,k}(\O,\mathsf Z)$ (cf. Definition \ref{CDq}). It is then seen that the compression of $\M$ onto $\HM_q$ in general is not irreducible (that is, there is no non-trivial reducing subspace) even if $\M$ is, by constructing an explicit examples of such operators.

Since we are interested in Hilbert modules in the Cowen-Douglas class, let us recall following \cite{OPOSE, GBKCD} that the Cowen-Douglas class $\mathrm B_r(\O)$ over $\O$ of rank $r$ consists of $m$-tuples $\T=(T_1,\hdots,T_n)$ of commuting bounded linear operators on a Hilbert space $\H$ such that every $z\in\O$ is a joint eigenvalue of $\T$ with $r$ dimensional joint eigenspace, the linear span of the eigenspaces is dense in $\H$ and the range of the operator $D_{\T-zI}:=\left(T_1 -z_1I,\hdots,T_n -z_nI\right)$ is closed in $\H\oplus\cdots\oplus\H$. It has been proved in \cite{GBKCD} that the corresponding $n$ - tuple of operators $\T$ is simultaneously unitarily equivalent to the adjoint of the $n$ - tuple of multiplication operators $\M=(M_{z_1},\hdots,M_{z_n})$ by the co-ordinate functions on a Hilbert space $\H_K$ of holomorphic functions on $\O^*:=\{\ov{z}:z\in\O\}$ possessing a reproducing kernel $K$. For a reproducing kernel Hilbert module $\HM$ of holomorphic functions on $\O^*$ over $\A(\O^*)$ (cf. Definition \ref{Hilbertmodule}), slightly abusing the terminology we say $\HM \in \mathrm B_r(\O)$ whenever the adjoint of $\M$ on $\HM$ is in $\mathrm B_r(\O)$. This class of Hilbert modules has been a bridge between the study of operator theory and hermitian holomorphic vector bundles owing to the observation that a reproducing kernel Hilbert module $\HM$ with the reproducing kernel $K$ in $\mathrm B_r(\O)$ possesses a holomorphic frame, namely, for each $1\leq j\leq r$, a holomorphic map $s_j:\O \ra \HM$ defined by $s_j(w)=K(\cdot,\ov{w})\sigma_j$ for the hermitian holomorphic vector bundle $E_{\HM} \ra \O$ where $E_{\HM}=\{(f,w): f \in \ker D_{\M^*-\ov{w}I} \}\subset \HM \times \O$ and $\{\sigma_1,\hdots,\sigma_r\}$ is the standard ordered basis for $\C^r$. Then $K(\ov{w},\ov{w})$ turns out to be the hermitian metric on the fibre $E_{\HM}|_{w}$ of $E_{\HM}$ over $w\in\O$. Moreover, homogeneity of the $m$-tuple of operators $\M$ on $\HM \in \mathrm B_r(\O)$ is same as the homogeneity of the bundle $E_{\HM}$ (cf. Definition \ref{defhom}) as shown in Theorem \ref{homeqgc}.

Following \cite{OUIOQM} note that each pair of Hilbert modules $(\HM,\HM_q)$ over $\A(\O^*)$ in $\mathrm B_{r,k}(\O,\mathsf Z)$ gives rise to the jet bundle $J^{(k)}E_{\HM} \ra \O$ of order $k$ of the vector bundle $E_{\HM} \ra \O$ relative to $\mathsf Z$ as described in Section \ref{hom}. Also, it turns out that two such quotient modules are unitarily equivalent if and only if the corresponding jet bundles restricted to the submanifold $\mathsf Z$ are isomorphic. So in view of Theorem \ref{homeqgc}, the homogeneity of the quotient module $\HM_q$ can be seen as that of the jet bundle $J^{(k)}E_{\HM} \ra \O$ restricted to $\mathsf Z$. This observation leads to Theorem \ref{jethom} showing that homogeneity of a holomorphic vector bundle $E \ra \O$ under the action of some subgroup $G$ of $\aut(\O)$ fixing the submanifold $\mathsf Z$ yields the homogeneity of the jet bundle $J^{(k)}E_{\HM} \ra \O$ restricted to $\mathsf Z$ with respect to the group $G_{\mathsf Z} = \{\phi \in \aut(\mathsf Z):\phi=\Phi|_{\mathsf{Z}}, \Phi \in G\}$ when $\mathsf Z$ is assumed to be the intersection $Z$ of a co-ordinate plane with $\O$. An explicit formula for the automorphism of the jet bundle corresponding to a group element $\phi \in G_{Z}$ is obtained in terms of that of the bundle $E \ra \O$. What is more, it is shown in Theorem \ref{thmhom0} that this automorphisms of the jet bundle turns out to be isometric bundle maps when the bundle $E \ra \O$ and the jet bundle $J^{(k)}E_{\HM} \ra \O$ are associated to a pair of Hilbert modules $(\HM,\HM_q)$ in $\mathrm B_{r,k}(\O,Z)$. Then with the help of the observation made in Proposition \ref{eqhom} that the homogeneity of the compression of the tuple of multiplication operators onto $\HM_q$ is independent of the change of co-ordinates, we extend Theorem \ref{thmhom0} to the case where the pair of Hilbert modules $(\HM,\HM_q)$ is assumed to be in $\mathrm B_{r,k}(\O,\mathsf Z)$ for $\mathsf Z$ as in part (i) of \ref{stdassumption}. This is Theorem \ref{thmhom}. Finally, we conclude Section \ref{hom} with an application of these results showing that the compression of the $m$-tuple of multiplication operators by the co-ordinate functions on the weighted Bergman space over unit poly-disc $\D^m$ onto the quotient space obtained by the subspace of functions vanishing of order $k$ along the submanifold $\{\boldsymbol{z} \in \D^m:z_1=\cdots=z_d\}$ are homogeneous with respect to the group M\"ob$(\D)^{m-d+1}$.

 In Section \ref{qmreducibility}, we discuss on irreducibility of the homogeneous operators obtained in Section \ref{hom}. We point out from the example given in \cite[Theorem 6.1]{OICHO} that the compression of the tuple of multiplication operators $\M$ onto $\H_q$, in general, is not irreducible. In this regard, we take $\O$ to be $\D^m$ and consider the subspace $\HM^{(n)}$ of weighted Bergman space $\H^{(\a)}(\D^m)$ with $\a=(\a_1,\hdots,\a_m) \in \R^{m}_{>0}$ over $\D^m$ consisting of functions vanishing of order $n$ along the submanifold $\Delta:=\{(z_1,\hdots, z_m) \in \D^m : z_1 =\cdots= z_m\}$. It follows that the pair of Hilbert modules $(\H^{(\a)}(\D^m),\HM_q^{(n)})$ is in $\mathrm B_{1,n}(\D^m,\Delta)$ (\cite[Theorem 4.10]{OUIOQM}) where $\HM^{(n)}_q=\H^{(\a)}(\D^m) \ominus \HM^{(n)}$. Consequently, this pair of Hilbert modules gives rise to a jet bundle $J^{(n)}E \ra \D^m$ of the hermitian holomorphic line bundle $E \ra \D^m$ associated to $\H^{(\a)}(\D^m)$. In one of the main results of Section \ref{qmreducibility}, we decompose these jet bundles into orthogonal subbundles while $m=3$. This is Theorem \ref{mthm2}. We then extend this result to the general $m$ in Theorem \ref{mthm3} with the help of which we prove that the compression of the multiplication operator $M_{z_1}$ onto $\HM_q^{(n)}$ is reducible (cf. Theorem \ref{reducibility}). Also, the irreducible factors are obtained and identified as the \textit{Generalized Wilkins' operatots} (\cite[Page 428]{HOPRMGS}).

\subsection{Standing assumption:}\label{stdassumption}
\begin{itemize}

\item[(i)] Throughout this article, we make a standing assumption that the connected complex submanifolds $\Z\subset \O$ is biholomorphic to some co-ordinate plane in $\C^m$. In other words, $\Z$ possesses a global admissible chart $(\O,\psi)$ by which we mean that $\psi:\O\ra\C^m$ is a biholomorphism onto its range such that it takes the form $\psi(z)=(\psi_1(z),\hdots,\psi_d(z),z_{d+1},\hdots,z_m)$ with $\psi(p)=0$ and $$\psi(\O\cap\Z)=\{\l=(\l_1,...,\l_m) \in \psi(\O):\l_1=\cdots=\l_d=0\}.$$ We denote such submanifolds by the letter $\mathsf{Z}$ to make difference from the general one $\Z$. So throughout this section we only consider connected complex submanifolds $\mathsf{Z} \subset \O$.\\

\item[(ii)] Let $\HM$ be a reproducing kernel Hilbert space with reproducing kernel $K$ on $\O$ and $\HM_0$ be the submodule of $\HM$ consisting of functions vanishing of order $k$ along the submanifold $\mathsf{Z}$. Then by the compression of the tuple multiplication operators $\M=(M_{z_1},\hdots,M_{z_n})$ on $\HM$ onto the quotient space $\HM_q:=\HM\ominus\HM_0$ we mean that the compression of $(M_{z_{d+1}},\hdots,M_{z_n})$ onto $\HM_q$. 
\end{itemize}

\section{Jet construction}\label{JC}

We begin this this section by recalling the definition of Hilbert module. 
\begin{defn}\label{Hilbertmodule}
Let $\O$ be a bounded domain in $\C^m$ and $\A(\O)$ be the unital Banach algebra obtained as the norm closure \w the supremum norm on $\ov{\O}$ of all functions holomorphic on a neighbourhood of $\ov{\O}$. A complex Hilbert space $\H$ is said to be a Hilbert module over $\A(\O)$ with module map $\A(\O)\times\H \overset{\pi}\ra \H$ by point wise multiplication such that the module action $\A(\O)\times \H \overset{\pi}\ra \H$ is norm continuous. When a Hilbert module $\HM$ possesses a reproducing kernel on $\O$, we say that $\HM$ is a reproducing kernel Hilbert module.
\end{defn}
Let $\HM$ be a Hilbert module over $\A(\O)$ and $\HM_0 \subset \HM$ be the submodule of $\C^r$-valued holomorphic functions on $\O$ vanishing along a connected complex submanifold $\mathsf{Z} \subset \O$ of codimension $d$, $d \geq 1$. Suppose that the tuple of multiplication operators $\M=(M_{z_1},\hdots,M_{z_m})$ by co-ordinate functions on $\HM$ is in $B_r(\O)$. We then consider the quotient module $$\HM_q:= \HM/\HM_0= \HM \ominus \HM_0,$$ in other words, we have the following exact sequence \beq\label{exact} 0 \ra \HM_0 \overset{i}{\ra} \HM \overset{P}{\ra} \HM_q \ra 0\eeq where $i$ is the inclusion map and $\pi$ is the quotient map. For $f \in \A(\O)$ and $h \in \HM$, we define the module action on the quotient module $\HM_q$ as
\beq fP(h)=P(fh)\eeq where we mean $(fh_1,\hdots,fh_r)$ by $fh$.


With a proper shrinking of the domain and a suitable co-ordinate changing, we may further assume without loss of generality that $\mathsf{Z} = \{z=(z_1,\hdots,z_m)\in \O : z_1 = \cdots = z_d = 0\}$ and 
$$\HM_0 = \{h \in \HM : \del^{\a_1}_1\cdots\del^{\a_d}_d(h_j)|_{\mathsf{Z}}=0, 0 \leq |\a| \leq k-1, 1 \leq j \leq r\}.$$


We refer reader to Section $3$ in \cite{OUIOQM} for validity of such a reduction in detail. Now in order to describe the jet construction relative to the submanifold $\mathsf{Z}$ following \cite{OUIOQM}, let $A= \{\a=(\a_1,\hdots,\a_m)\in (\N\cup\{0\})^d:|\a|<k\}$, $\{\e_{\a}\}_{\a\in A}$, $N$ be the cardinality of $A$  and $\{\s_j\}_{j=1}^r$ be the standard ordered bases for $\C^{N}$ and  $\C^r$, respectively. For $h \in \HM$, let us define 
\beq\label{defjv}\textbf{h}:=\sum_{i=1}^r\left(\sum_{\a\in A}\del^{\a} h_i \otimes \e_l\right)\otimes \sigma_i\eeq and consider the space
\beq\label{jcon} J(\H):=\{\textbf{h}:h \in \H\} \subset \H\otimes \C^{Nr}\eeq where $\del^{\a}=\del_1^{\a_1}\cdots\del_d^{\a_d}$ with $\del_i=\frac{\partial}{\partial z_i}$. Note that the mapping $J:\H \ra J(\H)$ defined by $h \mapsto \textbf{h} $ is injective and consequently, we can define an inner product on $J(\H)$ as follows $$\<J(h_1),J(h_2)\>_{J(\H)}:=\<h_1,h_2\>_{\H}$$ making $J$ to be an unitary transformation.

Recall from \cite[Proposition 4.1]{OUIOQM} whenever $\HM$ is a reproducing kernel Hilbert space with a reproducing kernel $K$, so is the Hilbert space $J(\HM)$ of holomorphic functions on $\O$ taking values in $\C^{Nr}$ with the reproducing kernel 
\beq\label{repk} (JK)^{kl}_{ij}(z,w)=\del^k\dbar^l K_{ij}(z,w)\,\,\,\text{for}\,\,\,0\leq l,k\leq N,\,\,1\leq i,j \leq r~\text{and}~z,w\in\O.\eeq 

We now define the module action of the ring $\A(\O)$ on $J(\HM)$ making it to be a Hibert module over $\A(\O)$ as follows. For $f \in \A(\O)$ and $\textbf{h} \in J(\HM)$, the module action $J_f : J(\HM) \ra J(\HM)$ is defined by $J_f(\textbf{h}):=\J(f)\cdot\textbf{h}$ where $\J(f)$ is an $N\times N$ complex matrix with entries \beq\label{modac}\mathcal{J}(f)_{lj}:={\a_1 \choose \b_1}\cdots{\a_d \choose \b_d}\del^{\a-\b}f\eeq where $\a,\b\in A$ and $\textbf{h}$ can be thought of an $N\times r$ matrix with $\textbf{h}_i:= \sum_{\a\in A}\del^{\a} h_i \otimes \e_{\a}$, $1 \leq i \leq r$, as column vectors. It turns out that $\J(f)$ is a lower triangular matrix of the form
\[ \J(f)=
    \left(
    \begin{array}{ccccc}
     f                                    \\
      & \ddots           &   & \text{\huge0}\\
    \vdots  &    \mathcal{J}(f)_{lj}           & \ddots               \\
      &  &   &            \\
     \del^{N} f &  \hdots    &  \hdots &   &  f
    \end{array}
    \right).
\]
Note that $J$ is a module isomorphism between $\HM$ and $J(\HM)$ as it is clear from a simple calculation which is essentially an application of Leibniz rule. For detail calculations we refer reader to \cite[Section 4]{OUIOQM}.

In this set up, it is well known \cite[Theorem 4.5]{OUIOQM} that the quotient module $J(\HM)_q$ is unitarily equivalent to the module $J(\HM)|_{\text{res}\mathsf{Z}}$ over $\A(\O)|_{\text{res}\mathsf{Z}}$. It was also pointed out in Theorem 4.10 in \cite{OUIOQM} that the compression of the tuple of multiplication operators $\M$ on $\HM$ onto the quotient module $\HM_q$  lies in $\mathrm B_{N}(\mathsf{Z})$ whenever $\HM \in \mathrm B_1(\O)$, provided the reproducing kernel of $\HM$ has diagonal power series expansion where $\HM_q$ is restricted to a module over $\A(\mathsf{Z})$. So this phenomenon leads us to consider the following definition \cite[Definition 4.11]{OUIOQM}.

\begin{defn}\label{CDq} 
Let $\O \subset \C^m$ be bounded domain and $\mathsf{Z} \subset \O$ be the connected complex submanifold $\mathsf{Z}$ of codimension $d$. Then we say that the pair of Hilbert modules $(\HM,\HM_q)$ over the algebra $\A(\O^*)$ (here, by $\O^*$ we mean the set $\O^*=\{ \ov{w} : w \in \O \}$) is in $\mathrm B_{r,k}(\O,\mathsf{Z})$ if
\begin{enumerate}
\item there exists a resolution of the module $\HM_q$ as in $\eqref{exact}$ where the module $\HM$ over the algebra $\A(\O^*)$ appearing in the resolution is in $\mathrm B_r(\O)$;
\item for $f \in \A(\O^*)$, the restriction of the map $J_f$ to the submanifold $\mathsf Z^*$ defines the module action on $J(\HM)|_{\text{res}\mathsf{Z^*}}$ which is an isomorphic copy of $\HM_q$; and
\item the quotient module $\HM_q$ as a module over $\A(\O^*)|_{\mathsf{Z}^*}$ is in $\mathrm B_{Nr}(\mathsf{Z})$ where $N$ is the cardinality of the set $A= \{\a=(\a_1,\hdots,\a_m)\in (\N\cup\{0\})^d:|\a|<k\}$.
\end{enumerate}
\end{defn} 

\begin{rem} We now illustrate the jet construction described above with an example. For $\a,\b,\g \geq 0$, let $\H^{(\a,\b,\g)}$ be the reproducing kernel Hilbert space on $\D^3$ with the reproducing kernel 
\beq K^{(\a,\b,\g)}(\bz,\bw):=(1-z_1\ov{w}_1)^{-\a}(1-z_2\ov{w}_2)^{-\b}(1-z_3\ov{w}_3)^{-\g},\eeq
for $\bz,\bw \in \D^3$. Furthermore, the natural action of $\C[\bz]$ on $\H^{(\a,\b,\g)}$ makes it a Hilbert module over $\C[\bz]$ and for $\a,\b,\g > 0$, $\H^{(\a,\b,\g)}$ becomes a module over $\A(\D^3)$. 

Let us now consider the submodule $\H^{(\a,\b,\g)}_0$ consisting of holomorphic functions in $\H^{(\a,\b,\g)}$ which vanish upto order $2$ along the diagonal $\Delta:=\{\textbf{z}=(z_1,z_2,z_3) \in \D^3: z_1=z_2=z_3\}$, that is, $$\H^{(\a,\b,\g)}_0 =\{f \in \H^{(\a,\b,\g)}: f = \del_1 f =\del_2 f = \del_3 f =0 \text{ on } \Delta\}.$$
Let $\H^{(\a,\b,\g)}_q=\H^{(\a,\b,\g)}\ominus\H_0^{(\a,\b,\g)}$ be the quotient module. Since the reproducing kernel $K^{(\a,\b,\g)}$ possesses a diagonal power series, it follows from \cite[Theorem 4.10]{OUIOQM} that $(\H^{(\a,\b,\g)},\H^{(\a,\b,\g)}_q)$ is in $\mathrm B_{1,2}(\D^3,\Delta)$.


We now compute the reproducing kernel of the quotient module $\H^{(\a,\b,\g)}_q$ by exhibiting an orthonormal basis of it. So to begin with, observe that the submodule $\H^{(\a,\b,\g)}_0$ is the closure of the ideal $I:=<(z_1-z_2)^2,(z_1-z_2)(z_1-z_3),(z_1-z_3)^2>$ in the Hilbert space $\H^{(\a,\b,\g)}$ verifying that $B:=\{z_1^i z_2^j z_3^k(z_1-z_2)^2,z_1^i z_2^j z_3^k(z_1-z_2)(z_1-z_3),z_1^i z_2^j z_3^k(z_1-z_3)^2: i,j,k \in \N \cup \{0\}\}$ is a spanning set for the submodule $\H^{(\a,\b,\g)}_0$. Therefore, it is enough to find an orthonormal basis for the orthogonal complement of $B$. An easy but tedious calculation shows that $\{e^{(p)}_1,e^{(p)}_2,e^{(p)}_3:p \in \N \cup \{0\}\}$ forms a basis for the quotient module $\H^{(\a,\b,\g)}_q $ where
$$
e^{(p)}_1 \mapsto
\left(
\begin{array}{c}
{-(\a+\b+\g)\choose p}^{\frac{1}{2}}z_1^p\\
\a\sqrt{\frac{p}{\a+\b+\g}}{-(\a+\b+\g+1)\choose (p-1)}^{\frac{1}{2}}z^{p-1}_1\\
\b\sqrt{\frac{p}{\a+\b+\g}}{-(\a+\b+\g+1)\choose (p-1)}^{\frac{1}{2}}z^{p-1}_1\\
\end{array}
\right),
~e^{(p)}_2  \mapsto
\left(
\begin{array}{c}
0\\
\frac{\a\b}{\sqrt{\b(\a+\g)}}\frac{1}{\sqrt{\a+\b+\g}}{-(\a+\b+\g+2)\choose (p-1)}^{\frac{1}{2}}z^{p-1}_1\\
\frac{\b\g}{\sqrt{\b(\a+\g)}}\frac{1}{\sqrt{\a+\b+\g}}{-(\a+\b+\g+2)\choose (p-1)}^{\frac{1}{2}}z^{p-1}_1\\
\end{array}
\right)$$
$$\text{and}~~
e^{(p)}_3 \mapsto
\left(
\begin{array}{c}
0\\
\sqrt{\frac{\a\g}{\a+\g}}{-(\a+\b+\g+2)\choose (p-1)}^{\frac{1}{2}}z^{p-1}_1\\
-\sqrt{\frac{\a\g}{\a+\g}}{-(\a+\b+\g+2)\choose (p-1)}^{\frac{1}{2}}z^{p-1}_1\\
\end{array}
\right).\\
$$


This allows us to compute the reproducing kernel of the quotient module $\H^{(\a,\b,\g)}_q$ as follows

$$K_q(\bz,\bw)=\sum_{p=0}^{\infty}e^{(p)}_1(\bz)\cdot e^{(p)}_1(\bw)^* + e^{(p)}_2(\bz)\cdot e^{(p)}_2(\bw)^* + e^{(p)}_3(\bz)\cdot e^{(p)}_3(\bw)^*,\,\,\bz,\bw\in \Delta$$
which is a $3\times 3$ matrix valued function $((K_q(\bz,\bz)_{ij}))_{i,j=1}^3$ on $\Delta$ as expected. To compute the kernel $K_q(\bz,\bz)$ for $\bz \in \Delta$ we note, for $\bz=(z_1,z_1,z_1)$ in $\Delta$, that

$$
K_q(\bz,\bz)_{11} = K^{(\a,\b,\g)}(\bz,\bz),~
K_q(\bz,\bz)_{12} = \del_1K^{(\a,\b,\g)}(\bz,\bz),~
K_q(\bz,\bz)_{13} = \del_2K^{(\a,\b,\g)}(\bz,\bz),
$$
\Bea
K_q(\bz,\bz)_{23} &=& \frac{\a\b}{\a+\b+\g}\frac{d}{d|z_1|^2}\left({|z_1|^2(1-|z_1|^2)^{-(\a+\b+\g+1)}}\right)\\ &+& \left(\frac{\a\b\g}{(\a+\g)(\a+\b+\g)}-\frac{\a\g}{\a+\g}\right)(1-|z_1|^2)^{-(\a+\b+\g+2)}\\
&=& \a\b |z_1|^2 (1-|z_1|^2)^{-(\a+\b+\g+2)}\\ &=& \dbar_1\del_2K^{(\a,\b,\g)}(\bz,\bz),\\
K_q(\bz,\bz)_{22} &=& \frac{{\a}^2}{\a+\b+\g}\frac{d}{d|z_1|^2}\left({|z_1|^2(1-|z_1|^2)^{-(\a+\b+\g+1)}}\right)\\ &+& \left(\frac{{\a}^2\b}{(\a+\g)(\a+\b+\g)}+\frac{\a\g}{\a+\g}\right)(1-|z_1|^2)^{-(\a+\b+\g+2)}\\
&=& [\a+{\a}^2 |z_1|^2] (1-|z_1|^2)^{-(\a+\b+\g+2)}\\ &=& \del_1\dbar_1K^{(\a,\b,\g)}(\bz,\bz),\\
\Eea
and similar calculations also yield that $K_q(\bz,\bz)_{21}=\dbar_1K^{(\a,\b,\g)}(\bz,\bz)$, $K_q(\bz,\bz)_{31}=\dbar_2 K^{(\a,\b,\g)}(\bz,\bz)$, $K_q(\bz,\bz)_{32}=\dbar_2\del_1K^{(\a,\b,\g)}(\bz,\bz)$, and $K_q(\bz,\bz)_{33}=\del_2\dbar_2 K^{(\a,\b,\g)}(\bz,\bz)$. Thus, we have

$$K_q(\bz,\bz)|_{\Delta}=JK^{(\a,\b,\g)}(\bz,\bz)|_{\Delta}.$$
\end{rem}

\section{Homogeneity of multiplication operators of quotient modules}\label{hom}

In this section, assuming that the $m$-tuple of multiplication operators $\M$ on a reproducing kernel Hilbert module $\HM$ over $\A(\O)$ is homogeneous \w a subgroup $G$ of the biholomorphic automorphism group $\aut(\O)$ of $\O$, we show that the compression of $\M$ onto the quotient module $\HM_q$ is homogeneous under the action of some subgroup (depending upon $G$) of $\aut(\O)$. We make use of this fact to prove that the multiplication operators on quotient modules $\H^{(\l)}_q := \H^{(\l)} \ominus \H^{(\l)}_0$, $\l=(\l_1,\hdots,\l_m) \in \R^m$ with $\l_j> 0$, $1\leq j\leq m$, are homogeneous \w a certain subgroup of the automorphism group of $\mathsf{Z}$ where $\H^{(\l)}$ is the reproducing kernel Hilbert module over $\A(\D^m)$ with the reproducing kernel \beq\label{szrepk}K^{(\l)}(\bz,\bw):=(1-z_1\ov{w}_1)^{-{\l}_1}\cdots(1-z_m\ov{w}_m)^{-{\l}_m}\eeq on $\D^m$ and  the submodules $\H^{(\l)}_0 \subset \H^{(\l)}$ consist of holomorphic functions in $\H^{(\l)}$ vanishing of order $k$ along the connected complex submanifold $ \{(z_1,\hdots,z_m) \in \D^m : z_1=\cdots=z_d\}\subset \D^m$. 

%
%
Let $(\HM,\HM_q)\in\mathrm B_{r,k}(\O,\mathsf{Z})$ as introduced in Definition $\ref {CDq}$. Then $\HM$ gives rise to a {\h} $E_{\HM}$ with the global frame $\{K(.,\ov{w})\sigma_1,\hdots,K(.,\ov{w})\sigma_r:w \in \O^* \}$ on $\O^*$ where $\{\sigma_j:1\leq j\leq r\}$ is the standard ordered basis for $\C^r$. We denote this global holomorphic frame as $\f:=\{s_1(w),\hdots,s_r(w):w \in \O \}$ with $s_j(w):=K(.,\ov{w})\sigma_j$, $1 \leq j \leq r$ and $w \in \O$. Correspondingly, we have $\del^{\a}s_j(w)=\del^{\a}K(.,\ov{w})\sigma_j$, $1 \leq j \leq r$, $\a \in A$ and $w \in \O$. 

Following the procedure described in Section $5$ in \cite{OUIOQM}, we define the jet bundle $J^{(k)}E_{\HM}\overset{\pi_k}\ra \O$ of order $k$ of the holomorphic bundle $E_{\HM}\overset{\pi}\ra \O$ relative to the submanifold $\mathsf{Z}$ on $\O$ by declaring $\{\del^{\a}\f\}_{\a \in A}$ as holomorphic frame for $J^{(k)}E_{\HM}$ on $\O$, where we mean $\{\del^{\a}s_j\}_{j=1}^r$ by $\del^{\a}\f$. Since we have a global frame for $J^{(k)}E_{\HM}\overset{\pi_k}\ra \O$ we do not need to worry about the transition rule. We then define the hermitian metric on $J^{(k)}E_{\HM}\overset{\pi_k}\ra \O$ \w the frame $\{\del^{\a}\f\}_{\a \in A}$ by the Grammian $JH:=((JH_{\a\b}))_{\a,\b \in A}$ with $r\times r$ blocks $$JH_{\a\b}(w):=\left(\!\!\left(\langle\del^{\a} s_i(w), \del^{\b} s_j(w)\rangle\right)\!\!\right)_{i,j=1}^r\,\,\,\text{for}\,\,\a,\b \in A, w \in \O$$ where $H(w)=((\langle s_i(w),s_j(w)\rangle_{E_{\HM}}))_{i,j=1}^r$ is the metric on $E_{\HM}$ over $\O$.

We now give the description of subgroups of the biholomorphic automorphism group $\aut(\mathsf{Z})$ of $\mathsf{Z}$ which is of interest. Let $\Phi \in \aut(\O)$ be a biholomorphic automorphism of $\O$ such that $\Phi(\mathsf{Z})=\mathsf{Z}$. Then we note that $\Phi|_{\mathsf{Z}} \in \aut(\mathsf{Z})$ and we consider the subgroup $\aut(\O,\mathsf{Z}) \subset \aut(\O)$ which, by definition, is $$\aut(\O,\mathsf{Z}):= \{\Phi \in \aut(\O): \Phi(\mathsf{Z})=\mathsf{Z}\}.$$
Thus, $\aut(\O,\mathsf{Z})$ gives rise to a subgroup $\aut(\O,\mathsf{Z})|_{\mathsf{Z}}$ of $\aut(\mathsf{Z})$ defined by
$$\aut(\O,\mathsf{Z})|_{\mathsf{Z}}:= \{\phi \in \aut(\mathsf{Z}): \phi = \Phi|_{\mathsf{Z}} \text{ for some } \Phi \in \aut(\O,\mathsf{Z})\}.$$
In this set up, we are about to define vector bundle morphism between two vector bundles and the notion of homogeneous vector bundle \w some subgroup of the automorphism group of the base manifold. We recall few definitions following \cite{ACHOCD}.

\begin{defn}\label{vecmor}
Let $E \overset{\pi}{\ra} \O$ and $F \overset{\rho}{\ra} \O$ be two holomorphic vector bundles over $\O$. Then a vector bundle morphism is a pair of holomorphic mappings $(\hat{f},f)$ with $\hat{f}: E \ra F$ and $f : \O \ra \O$ such that the following diagram commutes 
$$\begin{tikzcd}
E\arrow[dd, "\pi"'] \arrow[rr, "\hat{f}"] &  & F \arrow[dd, "\rho"] \\
 &  &  \\
\O\arrow[rr, "f"'] &  & \O
\end{tikzcd}$$
that is, $\rho \circ\hat{f}=f \circ\pi$ and, for each $w \in \O$, $\hat{f}|_{w}:E_w \ra F_{f(w)}$ is a linear map where $E_w:=\pi^{-1}\{w\}$ and $F_{f(w)}:=\rho^{-1}\{f(w)\}$.
\end{defn}

An isomorphism between two holomorphic vector bundles, $E\overset{\pi}{\ra}\O$ and $F\overset{\rho}{\ra}\O$, is a vector bundle morphism so that both $\hat{f}:E \ra F$ and $f : \O \ra \O$ are biholomorphisms as well as, for each $w \in \O$, $\hat{f}|_w : E_w \ra F_{f(w)}$ is an invertible linear map. We denote $\aut(E)$ as the group of all automorphisms of the vector bundle $E\overset{\pi}\ra\O$. 

\begin{rem}
We note that the this is equivalent to saying that $\hat{f}$ is a bundle isomorhism from the vector bundle $E\overset{\pi}{\ra}\O$ onto the the bundle $f^*F\overset{\rho}{\ra}\O$ obtained by pulling back the bundle $F\overset{\rho}{\ra}\O$ via the map $f:\O\ra\O$. In other words, the following diagram commute.
$$\begin{tikzcd}
E\arrow[dd, "\pi"'] \arrow[rr, "\hat{f}"] &  & f^*F \arrow[lldd, "\rho"] \\
 &  &  \\
\O &  & 
\end{tikzcd}$$
\end{rem}

\begin{defn}\label{defhom}
Let $E \overset{\pi}{\ra}\O$ be a holomorphic vector bundle over $\O$ and $G \subset \aut(\O)$ be a subgroup of the group $\aut(\O)$ of biholomorphic automorphisms of $\O$. We then say $E \overset{\pi}{\ra}\O$ is homogeneous under the action of $G$ on $\O$ from left if $G$ acts on $\O$ transitively from left and, for every $g \in G$, there exists a bundle isomorphism $\hat{g}$ on $E \overset{\pi}{\ra}\O$ such that $\pi \circ \hat{g}=g \circ \pi$. 
\end{defn}

In view of the remark above, it is seen that $E\overset{\pi}{\ra}\O$ is homogeneous \w some subgroup $G\subset\aut(\O)$ if an only if the vector bundles $E\overset{\pi}{\ra}\O$ and $g^*E\overset{\pi}{\ra}\O$ are isomorphic for every $g\in G$.

In the following theorem, it is shown that the $k$-th order jet bundle of $E\overset{\pi}{\ra}\O$ relative to a co-ordinate plane is homogeneous \w some suitable group (depending upon $G$) assuming that $E\overset{\pi}{\ra}\O$ is homogeneous \w $G\subset\aut(\O)$.

\begin{thm}\label{jethom}
Let $\O$ be a bounded domain in $\C^m$ containing the origin and $Z:=\{z=(z_1,\hdots,z_m)\in\O:z_1=\cdots=z_d=0\}$. Let $E\overset{\pi}{\ra}\O$ be a holomorphic vector bundle of rank $r$ with a global holomorphic frame $\{s_1,\hdots,s_r\}$ which is homogeneous under the action of some subgroup $G$ of $\aut(\O,Z)$.Then the jet bundle $J^{(k)}E|_{Z}\overset{\pi_k}{\ra}Z$ relative to the submanifold $Z$ is homogeneous under the action of the group $G_Z=\{\phi\in\aut(Z):\phi=\Phi|_Z, \Phi\in G\}$. Moreover, if $(\hat{\Phi},\Phi)$ is the bundle isomorphism of $E$ with $\Phi\in G$ and $\Phi|_{Z}=\phi$ then $(J^{(k)}\hat{\Phi}|_{Z},\phi)$ defined by 
\beq\label{jpro2}
J^{(k)}\hat{\Phi}|_{Z}(\z(z)):= \sum_{\a \in A}\sum_{j=1}^r a_{\a j}(z)\del^{\a}(\hat{\Phi}\circ s_j(z)),
\eeq
for any holomorphic section $\z: Z \ra J^{(k)}E|_{Z}$ with $\z(z) = \sum_{\a \in A}\sum_{j=1}^r a_{\a j}(z)\del^{\a} s_j(z)$, is a bundle isomorphism for $J^{(k)}E|_{Z}$ where $a_{\a j}$ are holomorphic functions on $Z$ and $A=\{\a=(\a_1,\hdots,\a_m)\in (\N\cup\{0\})^d:|\a|<k\}$.
\end{thm}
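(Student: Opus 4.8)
The plan is to show that, for each $\phi\in G_Z$, formula \eqref{jpro2} produces a holomorphic bundle automorphism of $J^{(k)}E|_Z\to Z$ lying over $\phi$; homogeneity of $J^{(k)}E|_Z$ under $G_Z$ then follows. The transitivity half is immediate: every $\Phi\in G$ satisfies $\Phi(Z)=Z$, so $\Phi\mapsto\Phi|_Z$ is a surjective group homomorphism $G\to G_Z$, and the transitivity furnished by the homogeneity hypothesis (which, for a subgroup of $\aut(\O,Z)$, is transitivity on $Z$) descends to $G_Z$. So fix $\phi\in G_Z$, pick $\Phi\in G$ with $\Phi|_Z=\phi$, and let $(\hat\Phi,\Phi)$ be the given bundle isomorphism of $E$. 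What must be verified about the map in \eqref{jpro2} is that it (i) is well defined, i.e. its right-hand side lies in the fibre of $J^{(k)}E|_Z$ over $\phi(z)$, (ii) is $\C$-linear on fibres and holomorphic, (iii) is invertible, and (iv) satisfies $\pi_k\circ\big(J^{(k)}\hat\Phi|_Z\big)=\phi\circ\pi_k$.

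First I would write $\hat\Phi$ in the global frame: since $\hat\Phi\circ s_j$ is a holomorphic section of $E$ over $\O$ lying above $\Phi$ and $\{s_1,\dots,s_r\}$ is a global holomorphic frame, there are holomorphic functions $(\Psi_\Phi)_{ij}$ on $\O$ with $\Psi_\Phi(w)\in\mathrm{GL}_r(\C)$ such that $\hat\Phi(s_j(w))=\sum_{i=1}^r(\Psi_\Phi(w))_{ij}\,s_i(\Phi(w))$. Differentiating by $\partial^{\a}=\partial_1^{\a_1}\cdots\partial_d^{\a_d}$ ($\a\in A$) and applying the multi-index Leibniz rule writes $\partial^{\a}(\hat\Phi\circ s_j)$ as a sum of terms $\binom{\a}{\b}\,\partial^{\b}\Psi_\Phi(w)\,\partial^{\a-\b}(s_i\circ\Phi)(w)$ with $\b\le\a$, and $\partial^{\a-\b}(s_i\circ\Phi)$ is then unwound by the higher-order chain rule (Fa\`a di Bruno). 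The claim that makes \eqref{jpro2} meaningful is that, after restriction to $Z$, every term of this expansion is a holomorphic-in-$z$ multiple of some frame element $\partial^{\gamma}s_i(\phi(z))$ with $|\gamma|\le|\a|\le k-1$, so that $\gamma\in A$; in particular neither a tangential derivative of $s_i$ nor a derivative of order $\ge k$ survives the restriction. This is the step that must exploit $\Phi\in\aut(\O,Z)$: the vanishing of the first $d$ components of $\Phi$ along $Z$ is what forces the a priori unwanted terms to drop out, reflecting the fact that passing to the normal $k$-jet along $Z$ is a natural operation for the subgroup $\aut(\O,Z)$. Granting this, \eqref{jpro2} lands in $J^{(k)}E|_Z$ over $\phi(z)$, is fibrewise $\C$-linear because the coefficients $a_{\a j}(z)$ factor out, and is holomorphic because its matrix in the frame $\{\partial^{\gamma}s_i\}$ is, by the expansion above, built holomorphically from the $(\Psi_\Phi)_{ij}$, their normal derivatives, the normal derivatives of the components of $\Phi$, and binomial coefficients.

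For invertibility I would run the same construction with $\Phi^{-1}\in G$ and $\widehat{\Phi^{-1}}=\hat\Phi^{-1}$, obtaining $\big(J^{(k)}\widehat{\Phi^{-1}}|_Z,\phi^{-1}\big)$, and then check, by the same Leibniz–chain-rule bookkeeping applied to the composition, that $J^{(k)}\hat\Phi|_Z\circ J^{(k)}\widehat{\Phi^{-1}}|_Z=\mathrm{id}$ and likewise in the other order; equivalently, with the frame $\{\partial^{\gamma}s_i\}$ graded by $\gamma\mapsto|\gamma|$, the matrix of $J^{(k)}\hat\Phi|_Z$ is block triangular with diagonal blocks assembled from $\Psi_\Phi$ and from the symmetric powers of the (invertible) normal Jacobian of $\Phi$ along $Z$, hence invertible. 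In either case $J^{(k)}\hat\Phi|_Z$ is a holomorphic bundle automorphism of $J^{(k)}E|_Z$ with $\pi_k\circ\big(J^{(k)}\hat\Phi|_Z\big)=\phi\circ\pi_k$ built into \eqref{jpro2}. Since $\phi\in G_Z$ was arbitrary and $G_Z$ acts transitively on $Z$, this shows $J^{(k)}E|_Z\to Z$ is homogeneous under $G_Z$, with the lift of $\phi$ given by \eqref{jpro2}.

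The genuinely delicate part — the one I expect to be the main obstacle — is the well-definedness asserted in the second paragraph: verifying that normal differentiation of $\hat\Phi\circ s_j$ followed by restriction to $Z$ stays inside the span of the jet frame $\{\partial^{\gamma}s_i:\gamma\in A\}$. This is an exercise in organizing the higher-order chain rule around the hypothesis $\Phi(Z)=Z$, where one must keep track simultaneously of the two ways an expansion term could escape the frame — namely a derivative of order $\ge k$, or a tangential derivative of $s_i$ — and check that $\Phi$ fixing $Z$ eliminates both; everything else (linearity, holomorphy, invertibility, compatibility with $\phi$) is then routine.
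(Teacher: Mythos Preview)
Your outline is close to the paper's: both write $\hat\Phi$ in the global frame, expand $\partial^{\a}(\hat\Phi\circ s_j)$ by Leibniz and the chain rule, and extract invertibility from a block-triangular matrix whose diagonal blocks are built from $\Psi_\Phi$ and the normal part of $D\Phi|_Z$. The paper organises this via a factorisation $\hat\Phi=F_2\circ F_1$, with $F_1$ the fibrewise multiplication by $\Psi_\Phi$ over the identity and $F_2$ the pure reparametrisation $s_j(p)\mapsto s_j(\Phi(p))$ over $\Phi$; it shows $J^{(k)}\hat\Phi|_Z=J^{(k)}F_2|_Z\circ J^{(k)}F_1|_Z$, reads off the block lower-triangular matrix for $J^{(k)}F_1$ from Leibniz, and for $J^{(k)}F_2$ observes that $D\Phi(z)$ is block triangular at $z\in Z$ with invertible normal and tangential blocks, then invokes Proposition~3.10 of \cite{OUIOQM} (the change-of-variables result for the jet construction) to conclude that $J^{(k)}F_2|_Z$ is a well-defined isomorphism.

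Where your sketch goes wrong is the mechanism you propose for well-definedness. You claim that the vanishing of $\Phi_1,\dots,\Phi_d$ on $Z$ makes the tangential-derivative terms in the Fa\`a di Bruno expansion drop out after restriction. It does not: already at first order $\partial_j(s_i\circ\Phi)=\sum_{\ell=1}^m(\partial_j\Phi_\ell)\,(\partial_\ell s_i)\circ\Phi$, and for $j\le d<\ell$ the coefficient $\partial_j\Phi_\ell$ sits in the \emph{lower}-left block of $D\Phi$, which carries no vanishing condition from $\Phi(Z)=Z$; it is the \emph{upper}-right block (tangential derivatives of $\Phi_1,\dots,\Phi_d$) that vanishes on $Z$. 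So terms involving $\partial_\ell s_i$ with $\ell>d$ genuinely survive. The correct argument is not that such terms cancel but that the construction is well-defined on the \emph{quotient}: since $\Phi$ preserves $Z$ it preserves the ideal $I_Z$ and hence $I_Z^k$, so composition with $\Phi$ induces a well-defined map on sections modulo those vanishing to order $k$ along $Z$; this is precisely what the paper imports through Proposition~3.10 of \cite{OUIOQM}, and the invertibility of the normal block of $D\Phi|_Z$ is what upgrades ``well-defined'' to ``isomorphism''. Your Fa\`a di Bruno bookkeeping alone will not close this; you need the quotient viewpoint (or, equivalently, the module-level statement the paper cites).
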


\begin{proof}
Let $\phi\in G_Z$ be a biholomorphic automorphism of $Z$ and note from the definition of $G_Z$ that there exists $\Phi\in G$ with $\Phi|_{Z}=\phi$. Since $E\overset{\pi}{\ra}\O$ is homogeneous \w the group $G$, $\Phi$ gives rise to a bundle automorphism $(\hat{\Phi},\Phi)\in\aut(E)$. We now consider the mapping $J^{(k)}\hat{\Phi}|_{Z}$ as given in \eqref{jpro2} and observe that $(J^{(k)}\hat{\Phi}|_{Z},\phi)$ is a bundle morphism of the holomorphic vector bundle $J^{(k)}E|_{Z}\overset{\pi_k}{\ra} Z$. Indeed, for a holomorphic section $\z: Z \ra J^{(k)}E|_{Z}$ with $\z = \sum_{\a \in A}\sum_{j=1}^r a_{\a j}\del^{\a} s_j$ and $z \in Z$, we have
\Bea
\pi_k(J^{(k)}\hat{\Phi}|_{Z}(\z(z))) &=& \pi_k\left(\!\sum_{\a \in A}\sum_{j=1}^r a_{\a j}\del^{\a}\left(\hat{\Phi}\circ s_j(z)\right)\!\right)\\ 
&=& \pi_k\left(\!\sum_{\a \in A}\sum_{j=1}^r a_{\a j}\del^{\a}\left(\sum_{i=1}^r\hat{\Phi}(z)_{ij} s_i(\phi(z))\right)\!\right)\\ 
&=& \phi(z)\\
&=& \phi \circ \pi_k(\z(z))
\Eea
verifying that $\pi_k \circ J^{(k)}\hat{\Phi}|_{Z}=\phi \circ \pi_k$. Also, from the definition of $J^{(k)}\hat{\Phi}|_{Z}$ given in \eqref{jpro2} it is seen that $J^{(k)}\hat{\Phi}|_{Z}(\z(z))$ is holomorphic on $Z$ and consequently, so is the map $(z,\z(z))\mapsto (\phi(z),J^{(k)}\hat{\Phi}|_{Z}(\z(z)))$.

Next, we show that $J^{(k)}\hat{\Phi}|_Z$ is a bundle isomorphism. Let $(F_j,f_j)$, $j=1,2$ be two maps defined as follows.
$$F_1:E\ra E~~~\text{by}~F_1(s_j(p))=\sum_{i=1}^r \hat{\Phi}(p)_{ij}s_i(p)~~\text{with}~~f_1:\O\ra \O~~~\text{by}~~f_1(z)=z, \text{ and }$$ $$F_2:E\ra E~~~\text{by}~F_2(s_j(p))=s_j(p)~~\text{with}~~f_2:\O\ra \O~~~\text{by}~~f_1(z)=\Phi(z),$$  for $1 \leq j \leq r$, where $((\hat{\Phi}(p)))_{i,j=1}^r$ is the matrix of the linear mapping $\hat{\Phi}|_{E_p}:E_p \ra E_{\Phi(p)}$ \w the global frame $\{s_j\}_{j=1}^r$. It is then evident that both $(F_1,f_1)$ and $(F_2,f_2)$ are bundle morphisms. Furthermore, it follows, for $1 \leq j \leq r$, that
\Bea
F_2\circ F_1(s_j(p)) &=& F_2\left(\sum_{i=1}^r \hat{\Phi}(p)_{ij} s_i(p)\right)\\
&=& \sum_{i=1}^r \hat{\Phi}(p)_{ij}F_2(s_i(p))\\
&=&  \sum_{i=1}^r \hat{\Phi}(p)_{ij}s_i(\Phi(p))\\
&=& \hat{\Phi}(s_j(p))
\Eea
verifying that $F_2 \circ F_1 = \hat{\Phi}$. Note from the definition of $(F_1,f_1)$ that $F_1$ is a vector bundle isomorphism on $E\overset{\pi}{\ra}\O$ and hence, so is $F_2$ since $\hat{\Phi}\in\aut(E)$.

For a holomorphic section $\z: Z \ra J^{(k)}E|_{Z}$ with $\z = \sum_{\a \in A}\sum_{j=1}^r a_{\a j}\del^{\a} s_j$, we consider following bundle morphisms on $J^{(k)}E|_{Z}\overset{\pi_k}{\ra}Z$,
$$J^{(k)}F_1|_{Z}(\z(z))=\sum_{\a \in A}\sum_{j=1}^r a_{\a j}(z)\del^{\a}(F_1\circ s_j(z)), \text{ and }$$
$$J^{(k)}F_2|_{Z}(\z(z))=\sum_{\a \in A}\sum_{j=1}^ra_{\a j}(z)\del^{\a} (F_2\circ s_j(z)),$$
and observe that
$$J^{(k)}F_2|_{Z} \circ J^{(k)}F_1|_{Z}=J^{(k)}\hat{\Phi}|_{Z}.$$
Indeed, it follows from the definitions of $^{(k)}F_2|_{Z}$ and $J^{(k)}F_1|_{Z}$ that
\Bea
J^{(k)}F_2|_{Z} \circ J^{(k)}F_1|_{Z}(\z(z)) &=& J^{(k)}F_2\left(\sum_{\a \in A}\sum_{j=1}^r a_{\a j}(z)\del^{\a}(F_1\circ s_j(z))\right)\\
&=& J^{(k)}F_2\left(\sum_{\a \in A}\sum_{j=1}^r a_{\a j}(z)\del^{\a}\left(\sum_{i=1}^r \hat{\Phi}(z)_{ij}s_i(z)\right)\right)\\
&=& J^{(k)}F_2\left(\sum_{\a \in A}\sum_{j=1}^r a_{\a j}(z)\left(\sum_{\b \in A}\sum_{i=1}^r\J(\hat{\Phi}_{ij})_{\b\a}(z)\del^{\b}s_i(z)\right)\right)\\
&=& J^{(k)}F_2\left(\sum_{\a,\b \in A}\sum_{i,j=1}^r a_{\a j}(z)\J(\hat{\Phi}_{ij})_{\b\a}(z)\del^{\b}s_i(z)\right)\\
&=& \sum_{\a,\b \in A}\sum_{i,j=1}^r a_{\a j}(z)\J(\hat{\Phi}_{ij})_{\b\a}(z)\del^{\b}(F_2 \circ s_i(z))\\
&=& \sum_{\a,\b \in A}\sum_{i,j=1}^r a_{\a j}(z)\J(\hat{\Phi}_{ij})_{\b\a}(z)\del^{\b}(s_i\circ\phi(z))\\
&=& J^{(k)}\hat{\Phi}|_{Z}(\z(z)).
\Eea

Now it remains to show that the bundle morphism $J^{(k)}\hat{\Phi}|_{Z}$ is an isomprhism of the vector bundle $J^{(k)}E|_{Z}\overset{\pi_k}{\ra}Z$. We show this by showing that the bundle morphisms $J^{(k)}F_1$ and $J^{(k)}F_2$ are so. In this regard we observe, with the help of the Leibniz rule, that the matrix of the linear mapping $$(J^{(k)}F_1|_{Z})|_{J^{(k)}E_p}:J^{(k)}E_p\ra J^{(k)}E_p$$ \w the ordered basis $\{\del^{\a}s_1(p),\hdots,\del^{\a}s_r(p):\a \in A\}$ is the block lower triangular matrix 
\[
   J(\hat{\Phi})(p)=
   \begin{pmatrix*}[c]
     ((\hat{\Phi}(p)_{ij})){{}_{i,j=1}^r}  & \hdots & \hdots  &  \hdots   &  \hdots    & ((\hat{\Phi}(p)_{ij})){{}_{i,j=1}^r}    \\
         & \ddots &         &           &            & \vdots \\
         &        & \ddots  & \multicolumn{2}{c}{((\partial^{\alpha}\partial^{\beta}\hat{\Phi}\mathrlap{(p)_{ij})){}_{i,j=1}^r}} & \vdots \\
   \hspace*{2.0em}\smash{\text{\Huge0}} &        &         & \ddots    &            & \vdots \\
         &        &         &           & \ddots     & \vdots \\
         &        &         &           &            & ((\partial_d^{k-1}\hat{\Phi}(p)_{ij}))_{i,j=1}^r     \\
\end{pmatrix*}
\]
for $p\in Z$, where $A=\{\a \in(\N\cup\{0\})^d:|\a|\leq k-1\}$ and the order is obtained from the graded colexicographic ordering on $A$. Since the diagonal blocks of the above matrix is invertible $J^{(k)}F_1$ is an isomorphism. 

\smallskip

We now show that $J^{(k)}F_2|_{Z}$ is also an isomorphism. Since $\Phi(Z)=Z$ and $Z=\{z=(z_1,\hdots,z_m)\in \O:z_1=\cdots=z_d=0\}$ we have, for $z\in Z$, that 
$$\Phi(0,z_{d+1},\hdots,z_m)=(0,\Phi_{d+1}(0,z_{d+1},\hdots,z_m),\hdots,\Phi_m(0,z_{d+1},\hdots,z_m))$$ where by $0$ we mean the zero vector of $\C^d$. Thus, the functions $\Phi_1,\hdots,\Phi_d$ are constant on $Z$. 
Therefore, the Jacobian matrix $D\Phi(z)$ at any point $z\in Z$ becomes the following block triangular matrix 
\[ D\Phi(z)=
    \left(
    \!\!\begin{array}{cc}
     \left(\!\!\left(\dfrac{\partial}{\partial z_j}\Phi_i(z)\right)\!\!\right)_{1\leq i,j\leq d} & \text{\huge0}\\
         \left(\!\!\left(\dfrac{\partial}{\partial z_j}\Phi_i(z)\right)\!\!\right)_{d+1\leq i\leq m,1\leq j\leq d} &  \left(\!\!\left(\dfrac{\partial}{\partial z_j}\Phi_i(z)\right)\!\!\right)_{d+1\leq i,j\leq m}\\    
    \end{array}\!\!
    \right).
\]
Since $D\Phi(z)$ is invertible both the matrices $$\left(\!\!\left(\dfrac{\partial}{\partial z_i}\Phi_j(z)\right)\!\!\right)_{1\leq i,j\leq d} \text{ and     }\,\,\, \left(\!\!\left(\dfrac{\partial}{\partial z_i}\Phi_j(z)\right)\!\!\right)_{d+1\leq i,j\leq m}$$ are invertible. Therefore, it follows from Proposition 3.10 in \cite{OUIOQM} that $J^{(k)}F_2|_{Z}$ is invertible. As a consequence, we have that the matrix of $J^{(k)}\hat{\Phi}|_{Z}(z)$ \w the ordered frame  $\{\del^{\a}s_1(z),\hdots,\del^{\a}s_r(z):\a \in A\}$ is invertible for every point $z\in Z$. Thus, $J^{(k)}\hat{\Phi}|_{Z}:J^{(k)}E|_{Z}\ra J^{(k)}E|_{Z}$ is a bundle morphism whose matrix \w an ordered holomorphic frame is invertible verifying that $J^{(k)}\hat{\Phi}|_{Z}$ is an isomorphism on the vector bundle $J^{(k)}E|_{Z}\overset{\pi}{\ra}Z$. 
\end{proof}

In the rest of this section, we use the previous theorem in studying homogeneity of the compression of the tuple of multiplication operators on a quotient Hilbert module possessing a reproducing kernel. Recall that a reproducing kernel $K$ on $\O$ is said to be \textit{quasi-invariant} if for every $\bf{g}\in G$, there exists a holomorphic mapping $J_{\bf{g}}:\O \ra \text{GL}(r,\C)$ such that 
\beq\label{quasiinv}K(z,w) &=& J_{\bf{g}}(z)K({\bf{g}} z,{\bf{g}} w)J_{\bf{g}}(w)^*, ~ \text{for all }{\bf{g}} \in G ~ \text{and} ~ z,w \in \O.\eeq
We say a continuous mapping $J:G\times\O \ra \text{GL}(r,\C)$ defined by $J(\bf{g},z):=J_{\bf{g}}(z)$ is a \textit{cocycle} if it satisfies 
$$J_{\bf{gh}}(z) = J_{\bf{h}}(z)\cdot J_{\bf{g}}({\bf{h}}z), ~ \text{for all}~ \bf{g},\bf{h} \in G ~ \text{and} ~ z \in \O.$$ 
Note that the continuous mapping $J$ -- which is holomorphic in the second variable -- is a cocycle if and only if the action $U: G \times \text{Hol}(\O,\C^r) \ra \text{Hol}(\O,\C^r)$ defined by $$(U_{\bf{g}^{-1}}f)(z) := J_{\bf{g}}(z)f({\bf{g}}z)$$ defines an unitary representation of $G$ onto $\H_K$, provided $U(\H_K)\subset \H_K$. We now recall following theorem about equivalent conditions for homogeneous operators from \cite{OICHO}.

\begin{thm}\cite[Theorem 3.1]{OICHO}\label{homeqac}
Suppose that $\H_K$ is a {\rkhs} with an $r \times r$ matrix valued reproducing kernel $K$ on  $\O$ on which the multiplicatin operator $\bf{M}=(M_1,\hdots,M_m)$ is bounded. Also, assume that either ${\bf{M}}^* \in \mathrm B_r(\O^*)$, or, $\C[{\bf{z}}]\otimes \C^r\subset\H_K$ is dense. Then following are equivalent.
\begin{itemize}
\item[(a)] The $m$-tuple $\bf{M}$ is $G$-homogeneous.
\item[(b)] The reproducing kernel $K$ is quasi-invariant, that is, it transforms according to the rule 
$$K(z,w) = J_{\bf{g}}(z)K({\bf{g}}z,{\bf{g}}w)J_{\bf{g}}(w)^*, ~ \text{for all }\bf{g} \in G ~ \text{and} ~ z,w \in \O$$ for some continuous function $J:G\times\O \ra \text{GL}(r,\C)$ holomorphic in the second variable denoted by $J(\textbf{g},z)= J_{\bf{g}}(z)$.
\item[(c)] The operator $U_{\bf{g}^{-1}}:\H_K \ra \H_K$ is well defined as well as is unitary for every $\bf{g} \in G$ where $(U_{\bf{g}^{-1}}f)(z) := J_{\bf{g}}(z)f(\bf{g}z)$ for $z \in \O$.
\end{itemize}
\end{thm}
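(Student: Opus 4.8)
The plan is to establish the cyclic chain $(a)\Rightarrow(b)\Rightarrow(c)\Rightarrow(a)$; the implications $(b)\Rightarrow(c)$ and $(c)\Rightarrow(a)$ are short reproducing-kernel manipulations, while $(a)\Rightarrow(b)$ carries the real content.

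For $(b)\Rightarrow(c)$ I would start from the quasi-invariance rule $K(z,w)=J_{\boldsymbol{g}}(z)K(\boldsymbol{g}z,\boldsymbol{g}w)J_{\boldsymbol{g}}(w)^{*}$, define a map on the linear span of the kernel functions by $K(\cdot,w)\xi\mapsto K(\cdot,\boldsymbol{g}w)J_{\boldsymbol{g}}(w)^{*}\xi$, and verify --- using $\langle K(\cdot,u)\eta,K(\cdot,u')\eta'\rangle=\langle K(u',u)\eta,\eta'\rangle$ together with the rule --- that this map is a well defined isometry. Since $\boldsymbol{g}$ is a bijection of $\O$ and each $J_{\boldsymbol{g}}(w)$ is invertible, its range is dense, so it extends to a unitary $V$ on $\H_K$, and a routine computation with the reproducing property identifies $V^{*}$ with the prescription $(V^{*}f)(z)=J_{\boldsymbol{g}}(z)f(\boldsymbol{g}z)$; thus $U_{\boldsymbol{g}^{-1}}=V^{*}$ is well defined and unitary. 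For $(c)\Rightarrow(a)$, a one-line computation gives $(U_{\boldsymbol{g}^{-1}}M_jf)(z)=J_{\boldsymbol{g}}(z)(\boldsymbol{g}z)_jf(\boldsymbol{g}z)=g_j(z)(U_{\boldsymbol{g}^{-1}}f)(z)$, so $U_{\boldsymbol{g}^{-1}}M_j=M_{g_j}U_{\boldsymbol{g}^{-1}}$ for each $j$, hence $g_j(\M)=M_{g_j}=U_{\boldsymbol{g}^{-1}}M_jU_{\boldsymbol{g}^{-1}}^{*}$ simultaneously in $j$, which is precisely $G$-homogeneity of $\M$.

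The heart of the argument is $(a)\Rightarrow(b)$. Fix $\boldsymbol{g}\in G$ and a unitary $U_{\boldsymbol{g}}$ with $g_j(\M)=U_{\boldsymbol{g}}^{*}M_jU_{\boldsymbol{g}}$, $1\le j\le m$; taking adjoints gives $M_{g_j}^{*}=U_{\boldsymbol{g}}^{*}M_j^{*}U_{\boldsymbol{g}}$, i.e.\ $M_j^{*}U_{\boldsymbol{g}}=U_{\boldsymbol{g}}M_{g_j}^{*}$. As $M_{g_j}^{*}K(\cdot,w)\xi=\overline{g_j(w)}\,K(\cdot,w)\xi$, this intertwining relation forces $U_{\boldsymbol{g}}K(\cdot,w)\xi$ into the joint eigenspace $\bigcap_{j}\ker\bigl(M_j^{*}-\overline{g_j(w)}\bigr)$ of $\M^{*}$ at the point $\overline{\boldsymbol{g}(w)}\in\O^{*}$. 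Here the standing hypothesis enters: if $\M^{*}\in\mathrm B_r(\O^{*})$ that eigenspace is the $r$-dimensional span of the columns of $K(\cdot,\boldsymbol{g}w)$; if instead $\C[\bz]\otimes\C^r$ is dense in $\H_K$ then any $f$ with $M_j^{*}f=\bar u_jf$ for all $j$ obeys $\langle p\eta,f\rangle=p(u)\,\overline{\langle f,\mathbf 1\otimes\eta\rangle}=\langle p\eta,K(\cdot,u)u_f\rangle$ for all polynomials $p$ and $\eta\in\C^r$, where $u_f\in\C^r$ represents the linear functional $\eta\mapsto\overline{\langle f,\mathbf 1\otimes\eta\rangle}$, whence $f=K(\cdot,u)u_f$ by density --- so again the eigenspace sits inside the span of the columns of $K(\cdot,u)$. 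Either way $U_{\boldsymbol{g}}K(\cdot,w)\xi=K(\cdot,\boldsymbol{g}w)\,c_{\boldsymbol{g}}(w)\xi$ for some $c_{\boldsymbol{g}}(w)\in\mathrm{GL}(r,\C)$ (invertibility by applying the same to $\boldsymbol{g}^{-1}$ and $U_{\boldsymbol{g}}^{*}$). Since $w\mapsto K(\cdot,w)\xi$ is conjugate-holomorphic into $\H_K$, $U_{\boldsymbol{g}}$ is bounded and $w\mapsto\boldsymbol{g}(w)$ is holomorphic, the matrix function $c_{\boldsymbol{g}}$ is conjugate-holomorphic on $\O$, so $J_{\boldsymbol{g}}:=c_{\boldsymbol{g}}^{*}$ is holomorphic on $\O$ with values in $\mathrm{GL}(r,\C)$. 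Finally, equating $\langle U_{\boldsymbol{g}}K(\cdot,w)\xi,U_{\boldsymbol{g}}K(\cdot,w')\xi'\rangle$ with $\langle K(\cdot,w)\xi,K(\cdot,w')\xi'\rangle$ and using the reproducing identity yields $c_{\boldsymbol{g}}(w')^{*}K(\boldsymbol{g}w',\boldsymbol{g}w)c_{\boldsymbol{g}}(w)=K(w',w)$, i.e.\ $J_{\boldsymbol{g}}(z)K(\boldsymbol{g}z,\boldsymbol{g}w)J_{\boldsymbol{g}}(w)^{*}=K(z,w)$, which is $(b)$ for this $\boldsymbol{g}$.

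Two points will require care. The first is the case distinction in the hypothesis: it is exactly what permits identifying joint eigenspaces of $\M^{*}$ with spans of kernel functions, and in the $\C[\bz]\otimes\C^r$-dense case the eigenspace is a priori only \emph{contained} in the span, so the functional-analytic step above must be executed carefully. The second, and the genuine obstacle, is promoting the family $\{J_{\boldsymbol{g}}\}_{\boldsymbol{g}\in G}$ to a single map $J\colon G\times\O\to\mathrm{GL}(r,\C)$ that is continuous and holomorphic in the second variable (ideally a cocycle): the implementing unitary $U_{\boldsymbol{g}}$ --- hence $c_{\boldsymbol{g}}$ and $J_{\boldsymbol{g}}$ --- is pinned down only modulo a fixed, $\boldsymbol{g}$-independent group of unitary automorphisms of the kernel. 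I would deal with this by fixing a base point $z_0\in\O$, normalizing $K(z_0,z_0)=I$ after a constant change of frame, and exploiting the transitivity of the $G$-action together with this residual gauge freedom to select $J_{\boldsymbol{g}}$ canonically (for instance via the isotropy representation on the fibre of the homogeneous Hermitian holomorphic bundle associated with $\H_K$), then deriving the cocycle identity from the ensuing uniqueness. This last step is the delicate one; everything else is bookkeeping with the reproducing kernel.
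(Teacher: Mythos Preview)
The paper does not prove this theorem: it is quoted verbatim from \cite[Theorem 3.1]{OICHO} and used as a black box, so there is no proof in the present paper to compare your attempt against. Your cyclic argument $(a)\Rightarrow(b)\Rightarrow(c)\Rightarrow(a)$ is the standard route and is essentially how the result is established in the original source; the reproducing-kernel manipulations you outline for $(b)\Rightarrow(c)$ and $(c)\Rightarrow(a)$ are correct, and your identification in $(a)\Rightarrow(b)$ of the joint eigenspace of $\M^{*}$ with the span of kernel columns (under either hypothesis) is the right mechanism.

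You have correctly isolated the only genuine issue: as stated, part~(b) asks for a \emph{continuous} map $J:G\times\O\to\mathrm{GL}(r,\C)$, not merely a family $\{J_{\boldsymbol g}\}$ holomorphic in $z$ for each fixed $\boldsymbol g$. Your construction produces $J_{\boldsymbol g}$ only up to the unitary ambiguity in $U_{\boldsymbol g}$, and the normalization you propose (fixing a base point and using the isotropy representation) is indeed how one resolves it; note, however, that the statement of (b) does \emph{not} demand the cocycle identity, so you need not push that far---it suffices to make a measurable (hence, via the quasi-invariance identity and holomorphy in $z$, continuous) selection of $U_{\boldsymbol g}$. Everything else in your sketch is routine.
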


Suppose now that $\HM$ is a reproducing kernel Hilbert module with the reproducing kernel $K$ defined on $\O^*:=\{\ov{w}:w\in \O\}$ so that the adjoint ${\M}^*=(M_1^*,\hdots,M_m^*)$ of the tuple of multiplication operators ${\M}=(M_1,\hdots,M_m)$ is in $\mathrm B_r(\O)$. It turns out that the associated {\h} $E_{\HM} \overset{\pi}{\ra}\O$ to $\M$ possesses  a global holomorphic frame $\f:=\{s_1(w),\hdots,s_r(w): w \in \O\}$ where $s_j(w)= K(.,\ov{w})\si_j$, for $j=1,\hdots,r$, $w \in \O$ and $\{\si_j\}_{j=1}^r$ are the standard order basis for $\C^r$. Then the hermitian metric $H$ on $E_{\HM}$ \w this frame becomes 
$$H_w(s_j(w),s_i(w))=\<K(.,\ov{w})\si_j,K(.,\ov{w})\si_i\>_{\HM}=\langle K(\ov{w},\ov{w})\si_j,\si_i\rangle_{C^r} = K_{ij}(\ov{w},\ov{w}),~\text{for}~ w\in \O.$$ 
We relate the homogeneity of this vector bundle $E_{\HM}$ to Theorem \ref{homeqac} in the theorem below.

\begin{thm}\label{homeqgc}
For any subgroup $G$ of the automorphism group $\aut(\O)$ of $\O$, the following are equivalent.
\begin{enumerate}
\item[(i)] $E_{\HM}$ is homogeneous under the action of the subgroup $G$ of $\aut(\O)$.
\item[(ii)] The reproducing kernel $K$ is quasi-invariant \w the subgroup, $G_c:=\{g_c=c\circ g \circ c^{-1}: g \in G\}$ of $\aut(\O^*)$.

\end{enumerate}
\end{thm}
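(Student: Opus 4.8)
The plan is to move back and forth between the hermitian holomorphic bundle $E_{\HM}\ra\O$ carrying its canonical metric and the reproducing kernel $K$ on $\O^*$, exploiting that the global holomorphic frame $\f=\{s_1,\dots,s_r\}$, $s_j(w)=K(\cdot,\ov w)\si_j$, trivialises $E_{\HM}$ and that the metric in this frame is the Gram matrix $H(w)=\big(\!\big(K_{ij}(\ov w,\ov w)\big)\!\big)_{i,j=1}^r$. Recall that homogeneity of the hermitian holomorphic bundle $E_{\HM}$ under $G$ means that $G$ acts transitively on $\O$ and, for every $g\in G$, there is a bundle automorphism $\hat g$ covering $g$ which preserves $H$. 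First I would record the frame description of such a $\hat g$: it is completely encoded by a holomorphic map $\Theta_g:\O\ra\mathrm{GL}(r,\C)$ through $\hat g(s_j(w))=\sum_{i=1}^r\Theta_g(w)_{ij}\,s_i(g(w))$, and it preserves $H$ if and only if $\Theta_g(w)^*K\big(\ov{g(w)},\ov{g(w)}\big)\Theta_g(w)=K(\ov w,\ov w)$ for all $w\in\O$; call this the \emph{diagonal identity} for $g$. Thus (i) is equivalent to: for each $g\in G$ there is a holomorphic $\Theta_g:\O\ra\mathrm{GL}(r,\C)$ satisfying the diagonal identity. I would also fix the conjugation $c:\O\ra\O^*$, $c(w)=\ov w$, so that $g_c=c\circ g\circ c^{-1}$ is a biholomorphism of $\O^*$ with $g_c(\ov w)=\ov{g(w)}$, the assignment $g\mapsto g_c$ is a group isomorphism $\aut(\O)\ra\aut(\O^*)$ carrying $G$ onto $G_c$, and $G$ is transitive on $\O$ precisely when $G_c$ is transitive on $\O^*$.

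For $(ii)\Rightarrow(i)$ the plan is direct. Given $g\in G$ and a holomorphic $J_{g_c}:\O^*\ra\mathrm{GL}(r,\C)$ with $K(z,w)=J_{g_c}(z)K(g_cz,g_cw)J_{g_c}(w)^*$ on $\O^*$, I would set $\Theta_g(w):=J_{g_c}(\ov w)^*$. Since $w\mapsto J_{g_c}(\ov w)$ is antiholomorphic and $\mathrm{GL}(r,\C)$-valued, $\Theta_g$ is holomorphic on $\O$ with values in $\mathrm{GL}(r,\C)$, and substituting $z=w=\ov w$ (with $w\in\O$, so $g_c(\ov w)=\ov{g(w)}$) in the quasi-invariance identity gives exactly the diagonal identity for $g$. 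Hence the associated $\hat g$ is a metric-preserving bundle automorphism of $E_{\HM}$ covering $g$, and together with the transitivity remark above this shows $E_{\HM}$ is homogeneous under $G$.

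For $(i)\Rightarrow(ii)$ I would run the same correspondence backwards. From a holomorphic $\Theta_g:\O\ra\mathrm{GL}(r,\C)$ satisfying the diagonal identity, define $J_{g_c}(z):=\Theta_g(\ov z)^*$ for $z\in\O^*$; as before this is holomorphic and $\mathrm{GL}(r,\C)$-valued on $\O^*$. Put $L(z,w):=J_{g_c}(z)K(g_cz,g_cw)J_{g_c}(w)^*$ on $\O^*\times\O^*$. Since $g_c$ and $J_{g_c}$ are holomorphic on $\O^*$, $L$ is holomorphic in $z$ and antiholomorphic in $w$, just like $K$, and the diagonal identity says precisely that $L(z,z)=K(z,z)$ for all $z\in\O^*$. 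The hard part is to promote this to $L\equiv K$. I would argue that $F:=L-K$ is sesqui-analytic on the connected set $\O^*\times\O^*$ and vanishes on the diagonal, expand $F$ near a diagonal point $(z_0,z_0)$ as $\sum_{\a,\b}c_{\a\b}(z-z_0)^{\a}\ov{(w-z_0)}^{\b}$, and use that the real-analytic monomials $(z-z_0)^{\a}\ov{(z-z_0)}^{\b}$ are linearly independent to conclude every $c_{\a\b}=0$, hence $F\equiv0$ by the identity principle on $\O^*\times\O^*$. (Equivalently, $L$ is itself a matrix-valued reproducing kernel, obtained from the reproducing kernel $(z,w)\mapsto K(g_cz,g_cw)$ by conjugation with a holomorphic $\mathrm{GL}(r,\C)$-valued function, and a reproducing kernel is determined by its restriction to the diagonal.) This yields $K(z,w)=J_{g_c}(z)K(g_cz,g_cw)J_{g_c}(w)^*$ for all $z,w\in\O^*$, i.e. quasi-invariance of $K$ with respect to $G_c$.

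The main obstacle is exactly this polarization in $(i)\Rightarrow(ii)$: homogeneity of $E_{\HM}$ only supplies the transformation rule along the diagonal of $\O^*\times\O^*$, and promoting it to the full off-diagonal identity rests on the rigidity of sesqui-analytic matrix functions (equivalently, of matrix-valued reproducing kernels). The rest is routine once one is careful with the holomorphic/antiholomorphic bookkeeping imposed by $c$ — that is what turns the $\O$-side diagonal identity into a statement about a genuinely holomorphic cocycle $J_{g_c}$ on $\O^*$; alternatively, once the diagonal identity is available one can reach (ii) by checking that $(U_{g_c^{-1}}f)(z)=J_{g_c}(z)f(g_cz)$ defines a unitary on $\HM$ and invoking Theorem \ref{homeqac}.
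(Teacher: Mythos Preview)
Your argument is correct and follows essentially the same route as the paper: translate a metric-preserving bundle automorphism $\hat g$ into a holomorphic $\Theta_g$ (the paper's $[\hat g|_w]$), set $J_{g_c}(z)=\Theta_g(\ov z)^*$, obtain the diagonal transformation rule for $K$, and then polarize. You supply the sesqui-analytic polarization argument that the paper compresses into the phrase ``after polarization,'' and you (harmlessly) omit the paper's additional cocycle verification, which is not part of the definition of quasi-invariance as stated.
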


\begin{proof}
Assume that $E_{\HM} \overset{\pi}{\ra}\O$ is homogeneous under the action of $G$, that is, for each $g\in G$, there exists a bundle automorphism $\hat{g}:E_{\HM}\ra E_{\HM}$. In other words, for each $w \in \O$, there is a linear isometry $\hat{g}|_w:E_{\HM}|_w \ra E_{\HM}|_{gw}$. Therefore, it follows that 
\Bea
H_w(s_j(w),s_i(w)) &=& H_{gw}(\hat{g}|_w(s_j(w)),\hat{g}|_w(s_i(w)))\\
&=& H_{gw}\left(\sum_{k=1}^r[\hat{g}|_w]_{kj}s_k(gw),\sum_{l=1}^r[\hat{g}|_w]_{li}s_l(gw)\right)\\
&=& \sum_{l,k=1}^r[\hat{g}|_w]_{kj}\ov{[\hat{g}|_w]_{li}}H_{gw}(s_k(gw),s_l(gw)).
\Eea 
Thus, using the definition of the metric $H_{gw}$ in terms of reproducing kernel we get from the above equation 
\Bea
K_{ij}(\ov{w},\ov{w}) &=& \<K(\ov{w},\ov{w})\si_j,\si_i)\>\\
&=& \sum_{l,k=1}^r[\hat{g}|_w]_{kj}\ov{[\hat{g}|_w]_{li}}\<K(\ov{gw},\ov{gw})\si_k,\si_l\>\\
&=& \sum_{l,k=1}^r [\hat{g}|_w]^*_{il}K_{lk}(\ov{gw},\ov{gw})[\hat{g}|_w]_{kj}
\Eea
which is equivalent to the following identities
\bea\label{qinv1}
\nonumber K(\ov{w},\ov{w}) &=& [\hat{g}|_w]^*K(\ov{gw},\ov{gw})[\hat{g}|_w]\\
&=& [\hat{g}|_w]^*K(g_c(\ov{w}),g_c(\ov{w}))[\hat{g}|_w].
\eea
For $z \in \O^*$, the equation above takes the form 
$$K(z,z) = [\hat{g}|_{\ov{z}}]^*K(g(z),g(z))[\hat{g}|_{\ov{z}}]$$
leading us to define the matrix valued function $J_{g_c}:\O^* \ra \text{GL}(r,\C)$ by $J_{g_c}(z)=[\hat{g}|_{\ov{z}}]^*$. Evidently, $J_{g_c}$ is a holomorphic function on $\O^*$. In this set up, the equation $\eqref{qinv1}$, after polarization, turns out to be
\beq\label{qinv}
K(z,w) &=& J_{g_c}(z) K(g_cz,g_cw) J_{g_c}(w)^*, ~ \text{for}~z,w \in \O^*.
\eeq
Since $(\hat{h}\circ \hat{g})|_w = \hat{h}|_{g(w)}\circ \hat{g}|_w$ for each $w\in \O$, it follows that the mapping $J: G_c \times \O^* \ra \C^{r \times r}$ defined by $J(g_c,z)=J_{g_c}(z)$ satisfies the cocycle identity (cf. \eqref{quasiinv}) which together with the equation \eqref{qinv} imply that $K$ is a quasi invariant kernel completing the proof of (i) $\implies$ (ii). 

So it remains to show that (ii) $\implies$ (i). For $g_c \in G_c$, let $J_{g_c}:\O^* \ra \text{GL}(r,\C)$ be a holomorphic mapping satisfying 
$J_{g_ch_c}(z)=J_{g_c}(z) \cdot J_{h_c}(g_cz)$, $z \in \O^*$ such that 
$$K(z,w) = J_{g_c}(z) K(g_cz,g_cw)J_{g_c}(w)^*, \text{ for } z,w\in \O^*,\,\,g_c \in G_c.$$
Define, for $g \in G$, the bundle morphism $\hat{g}:E_{\HM} \ra E_{\HM}$ as follows:
$$\hat{g}s_j(w):=\sum_{k=1}^r [J_{g_c}(\ov{w})^*]_{kj}s_k(gw),~ ~ \text{for}~w \in \O.$$
Since $w \mapsto s_j(w)$, $w \mapsto J_{g_c}(\ov{w})^*$ as well as $w \mapsto gw$ are all holomorphic on $\O$, so is the mapping $w \mapsto \hat{g}|_w$. Therefore, $\hat{g}$ is a holomorphic bundle morphism which preserves the hermitian structure of $E_{\HM} \ra \O$. Thus, it completes the proof.
\end{proof}

\begin{thm}\label{homeqc}
Let $\O \subset \C^m$ be a bounded domain and $\HM$ be a reproducing kernel Hilbert module in $\mathrm B_r(\O)$ with the reproducing kernel $K$ defined on $\O^*$. Let $G \subset \aut(\O)$ be a subgroup. Then the {\h} $E_{\HM}\ra \O$ is homogeneous \w $G$ if and only if the multiplication operator ${\M}=(M_1,\hdots,M_m)$ on $\HM$ is $G_c$-homogeneous where $G_c:=\{g_c=c\circ g \circ c^{-1}: g \in G\}$ and $c:\O \ra\O^*$ is the mapping defined by $z \mapsto \ov{z}$.
\end{thm}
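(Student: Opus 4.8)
The plan is to obtain this statement by simply concatenating the two equivalences already in hand, namely Theorem \ref{homeqgc} and Theorem \ref{homeqac}, once the bookkeeping between the two domains $\O$ and $\O^*$ has been sorted out. First I would record the preliminary observation that, since $c:\O\ra\O^*$ defined by $z\mapsto\ov{z}$ is a bijection and $G$ is a subgroup of $\aut(\O)$, the conjugated set $G_c=\{g_c=c\circ g\circ c^{-1}:g\in G\}$ is indeed a subgroup of $\aut(\O^*)$ and $g\mapsto g_c$ is a group isomorphism $G\ra G_c$; this is what makes the phrase ``$\M$ is $G_c$-homogeneous'' meaningful in the sense of the introduction.

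Next I would invoke Theorem \ref{homeqgc} verbatim: the {\h} $E_{\HM}\ra\O$ associated to $\M$ is homogeneous under the action of $G\subset\aut(\O)$ if and only if the reproducing kernel $K$, which is defined on $\O^*$, is quasi-invariant with respect to the subgroup $G_c\subset\aut(\O^*)$. This is precisely the content of that theorem, so no new argument is needed here.

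Finally I would apply Theorem \ref{homeqac}, but with the roles of ``the domain'' and ``the group'' played by $\O^*$ and $G_c$ respectively, rather than by $\O$ and $G$. To legitimately do so I must verify the hypotheses of that theorem on the $\O^*$-side: $\HM=\H_K$ is a reproducing kernel Hilbert space on $\O^*$ with an $r\times r$ matrix valued kernel $K$, the multiplication tuple $\M=(M_1,\hdots,M_m)$ is bounded on $\HM$, and the tuple $\M^*$ lies in $\mathrm B_r((\O^*)^*)=\mathrm B_r(\O)$ --- which is exactly the standing assumption $\HM\in\mathrm B_r(\O)$ of the present theorem. Granting this, Theorem \ref{homeqac} yields that $K$ is quasi-invariant with respect to $G_c$ if and only if the $m$-tuple $\M$ is $G_c$-homogeneous. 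Chaining this with the equivalence from the previous paragraph gives: $E_{\HM}\ra\O$ is homogeneous with respect to $G$ $\iff$ $K$ is quasi-invariant with respect to $G_c$ $\iff$ $\M$ is $G_c$-homogeneous, which is the assertion.

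The only step demanding any attention --- and it is bookkeeping rather than a genuine obstacle --- is keeping straight which objects live over $\O$ and which over $\O^*$: the bundle $E_{\HM}$ and the group $G$ sit over $\O$, whereas the Hilbert module $\HM$, its kernel $K$, and the conjugated group $G_c$ live over $\O^*$, so one must be careful to apply Theorems \ref{homeqgc} and \ref{homeqac} on the correct side and to confirm that the Cowen--Douglas hypothesis $\M^*\in\mathrm B_r(\O)$ matches the form required by Theorem \ref{homeqac} after the substitution $\O\rightsquigarrow\O^*$. Beyond this translation I expect the proof to be immediate.
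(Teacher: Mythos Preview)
Your proposal is correct and follows exactly the same approach as the paper, which simply states that the result follows from Theorem~\ref{homeqgc} and Theorem~\ref{homeqac}. Your write-up merely spells out the bookkeeping between $\O$ and $\O^*$ that the paper leaves implicit.
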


\begin{proof}
The proof of this theorem follows from Theorem $\ref{homeqgc}$ and Theorem \ref{homeqac}.
\end{proof}

\begin{thm}\label{thmhom0}
Let $\O$ be a bounded domain in $\C^m$ containing the origin, $Z:=\{z=(z_1,\hdots,z_m)\in\O:z_1=\cdots=z_d=0\}$ and $(\HM,\HM_q)$ be a pair of Hilbert modules in $\mathrm B_{r,k}(\O,Z)$. Also, assume that $\textbf{M}=(M_1,\hdots,M_m)$ on $\HM$ is $G_c$-homogeneous where $G$ is a subgroup of $\aut(\O,Z)$, $c:\O \ra \O^*$ is the conjugation mapping and $G_c =\{g_c=c \circ g \circ c^{-1}: g \in G\}\subset \aut(\O^*,Z^*)$ with $Z^*=c(Z)$. Then the compression of  $\textbf{M}$ onto the quotient module $\HM_q$ is homogeneous \w the subgroup $(G_{Z})_c:=\{\phi_c : \phi \in G_{Z}\}$ where $G_{Z}=\{\phi \in \aut(Z):\phi = \Phi|_{Z},~\text{for some }\Phi \in G\}$.
\end{thm}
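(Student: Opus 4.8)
The plan is to transfer the bundle-theoretic statement of Theorem \ref{jethom} across the dictionary between Hilbert modules and Hermitian holomorphic vector bundles. First I would recall that, by Theorem \ref{homeqc}, the $G_c$-homogeneity of $\textbf{M}$ on $\HM$ is equivalent to the homogeneity of the associated Hermitian holomorphic vector bundle $E_{\HM}\overset{\pi}{\ra}\O^*$ under the action of $G_c$ (strictly speaking one works on $\O^*$, but since $G\subset\aut(\O,Z)$ and $c$ is the conjugation, $G_c\subset\aut(\O^*,Z^*)$ and $Z^*=c(Z)$ is again a coordinate plane through the origin, so the hypotheses of Theorem \ref{jethom} are met after this translation). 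Then $E_{\HM}$ carries the global holomorphic frame $\f=\{s_1,\hdots,s_r\}$ with $s_j(w)=K(\cdot,\ov w)\sigma_j$, and the $k$-th order jet bundle $J^{(k)}E_{\HM}\overset{\pi_k}{\ra}\O^*$ relative to $Z^*$ is exactly the bundle built from the frame $\{\del^\a\f\}_{\a\in A}$ with the Grammian metric $JH$ described just before Theorem \ref{jethom}.

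Next I would apply Theorem \ref{jethom} directly: since $E_{\HM}$ is homogeneous under $G_c\subset\aut(\O^*,Z^*)$, the restricted jet bundle $J^{(k)}E_{\HM}|_{Z^*}\overset{\pi_k}{\ra}Z^*$ is homogeneous under the group $(G_c)_{Z^*}=\{\psi\in\aut(Z^*):\psi=\Psi|_{Z^*},\ \Psi\in G_c\}$, with the explicit bundle automorphism $J^{(k)}\hat\Psi|_{Z^*}$ given by formula \eqref{jpro2}. One checks that $(G_c)_{Z^*}$ is exactly $(G_Z)_c$ under the conjugation $c$ — this is a routine verification: if $\phi=\Phi|_Z$ with $\Phi\in G$ then $\phi_c=c\circ\phi\circ c^{-1}=(c\circ\Phi\circ c^{-1})|_{Z^*}=\Phi_c|_{Z^*}$ with $\Phi_c\in G_c$, and conversely. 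The remaining point, supplied by the hypothesis $(\HM,\HM_q)\in\mathrm B_{r,k}(\O,Z)$ together with Theorem 4.5 (and 4.10) of \cite{OUIOQM}, is that the quotient module $\HM_q$ is unitarily equivalent, as a Hilbert module over $\A(\O^*)|_{Z^*}$, to the module of sections of $J^{(k)}E_{\HM}|_{Z^*}$; equivalently, the bundle associated to the compression of $\textbf{M}$ onto $\HM_q$ (which lies in $\mathrm B_{Nr}(Z)$ by Definition \ref{CDq}(3)) is precisely $J^{(k)}E_{\HM}|_{Z^*}$. Running Theorem \ref{homeqc} in the reverse direction on the submanifold $Z$ then yields that this compression is homogeneous with respect to $((G_Z)_c)$ — but here one must be careful: the statement of the theorem asserts homogeneity of the compression with respect to $(G_Z)_c$ acting on the bundle side, so by Theorem \ref{homeqc} applied with base $Z$ this is the same as the jet bundle being homogeneous under $(G_Z)_c$ viewed inside $\aut(Z^*)$, which is what Theorem \ref{jethom} gave us.

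Concretely the proof is then a short chain: (1) translate $\textbf{M}$ $G_c$-homogeneous $\Leftrightarrow$ $E_{\HM}$ homogeneous under $G_c$ via Theorem \ref{homeqc}; (2) observe $G_c\subset\aut(\O^*,Z^*)$ with $Z^*$ a coordinate plane, and apply Theorem \ref{jethom} to conclude $J^{(k)}E_{\HM}|_{Z^*}$ is homogeneous under $(G_c)_{Z^*}$; (3) identify $(G_c)_{Z^*}=(G_Z)_c$ by the straightforward conjugation computation above; (4) identify the bundle associated to the compression of $\textbf{M}$ onto $\HM_q$ with $J^{(k)}E_{\HM}|_{Z^*}$ using $(\HM,\HM_q)\in\mathrm B_{r,k}(\O,Z)$ and the jet-bundle identification from \cite{OUIOQM}; (5) translate back via Theorem \ref{homeqc} over the base $Z$ to conclude the compression is $(G_Z)_c$-homogeneous. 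I expect the main obstacle to be bookkeeping step (4): making precise that the unitary equivalence $\HM_q\cong\Gamma_{\mathrm{hol}}(J^{(k)}E_{\HM}|_{Z^*})$ intertwines the compressed module action with the module action $J_f$ on the jet bundle, so that the metric-preserving bundle automorphism $J^{(k)}\hat\Psi|_{Z^*}$ produced by Theorem \ref{jethom} actually corresponds, under this equivalence, to a unitary on $\HM_q$ implementing the required intertwining $\phi_c(\textbf{M}_q)=U^*\,\textbf{M}_q\,U$. The isometry of $J^{(k)}\hat\Psi|_{Z^*}$ with respect to the Grammian metric $JH$ — which is the crux of Theorem \ref{thmhom} and is only claimed here as part of the statement, resting on the $\mathrm B_{r,k}$ hypothesis — is where the real content lies; the rest is translation along already-established equivalences.
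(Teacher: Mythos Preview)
Your outline matches the paper's proof: translate to bundles via Theorem \ref{homeqc}, apply Theorem \ref{jethom} to obtain homogeneity of the restricted jet bundle, identify the resulting group with $(G_Z)_c$, and translate back through the identification of $\HM_q$ with $J(\HM)|_{\mathrm{res}\,Z^*}$ from \cite{OUIOQM}. The structure is right and the group identification $(G_c)_{Z^*}=(G_Z)_c$ is exactly as you say.

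The gap is the step you yourself flag as ``where the real content lies'' but do not carry out. Theorem \ref{jethom} only shows that $J^{(k)}\hat\Phi|_Z$ is a \emph{holomorphic} bundle automorphism; it says nothing about the hermitian metric $JH$. Without the isometry, you cannot invoke Theorem \ref{homeqgc}/\ref{homeqc} in the reverse direction, because those theorems concern \emph{hermitian} homogeneity (equivalently, quasi-invariance of the kernel). The paper supplies this explicitly and it is short: for $w\in Z$ and $\a,\b\in A$,
\[
JH_{\phi(w)}\bigl(J^{(k)}\hat\Phi|_Z(\del^\a s_i),\,J^{(k)}\hat\Phi|_Z(\del^\b s_j)\bigr)
=\del^\a\dbar^\b H_{\phi(w)}(\hat\Phi\circ s_i,\hat\Phi\circ s_j)
=\del^\a\dbar^\b H_w(s_i,s_j)
=JH_w(\del^\a s_i,\del^\b s_j),
\]
the first and last equalities being the definition of $JH$ together with \eqref{jpro2}, and the middle one being precisely the fact that $\hat\Phi$ is an isometric automorphism of $E_{\HM}$. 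This is the entire new content of the theorem; you should write it down rather than defer it. A minor bookkeeping point: in the paper's conventions $E_{\HM}$ is a bundle over $\O$ (not $\O^*$) with frame $s_j(w)=K(\cdot,\ov w)\sigma_j$, $w\in\O$, and it is homogeneous under $G$ (not $G_c$); your placement of the conjugation on the bundle side is harmless but does not match how Theorems \ref{homeqgc} and \ref{homeqc} are stated.
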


\begin{proof}
We begin by pointing out from Theorem \ref{homeqac} and Theorem \ref{homeqgc} that the vector bundle $E_{\HM} \overset{\pi} \ra \O$ associated to $\HM$ is homogeneous \w $G \subset \aut(\O,Z)$. 
It follows from Theorem $\ref{jethom}$ that the jet bundle $J^{(k)}E_{\HM}|_{Z}\ra Z$ is homogeneous \w $G_{Z}$ as a holomorphic vector bundle. We now show that $J^{(k)}E_{\HM}|_{Z}\ra Z$ is homogeneous \w $G_{Z}$ as a {\h}.

Let $\phi \in G_{Z}$ with $\phi=\Phi|_{Z}$ for some $\Phi \in G$ and $\hat{\Phi}$ be the bundle automorphism of $E_{\HM}\overset{\pi}\ra\O$ satisfying $\pi\circ\hat{\Phi}=\phi\circ\pi$. Following the proof of Theorem $\ref{jethom}$, note that $J^{(k)}\hat{\Phi}|_{Z}$ defined in $\eqref{jpro2}$ is a bundle automorphism of the holomorphic vector bundle $J^{(k)}E_{\HM}|_{Z}\overset{\pi_k}{\ra} Z$. So it is now enough to check that $J^{(k)}\hat{\Phi}|_{Z}$ preserves the hermitian metric $JH$ on $J^{(k)}E_{\HM}|_{Z}\overset{\pi_k}{\ra} Z$. To this extent, we take the global holomorphic frame $\{\del^{\a}\f\}_{\a\in A}$ of $J^{(k)}E_{\HM}|_{Z}\overset{\pi_k}\ra Z$ where $\f=\{s_1,\hdots,s_r\}$ is the global holomorphic frame for $E_{\HM}\overset{\pi}\ra \O$ with $s_j(w)=K(.,\ov{w})\si_j$, $1 \leq j \leq r$, $w \in \O$ and $A$ is the set of all multi-indices $\a=(\a_1,\hdots,\a_d)$ with $|\a| < k$. Now from the equation $\eqref{jpro2}$ we have , for $w \in Z$, that
\Bea
JH_{\phi(w)}(J^{(k)}\hat{\Phi}|_{Z}(\del^{\a}s_i|_w),J^{(k)}\hat{\Phi}|_{Z}(\del^{\b}s_j|_w)) 
&=& JH_{\phi(w)}(\del^{\a}(\hat{\Phi}|_{Z}\circ s_i(w)),\del^{\b}(\hat{\Phi}|_{Z}\circ s_j(w)))\\
&=& \del^{\a}\dbar^{\b}H_{\phi(w)}(\hat{\Phi}|_{Z}\circ s_i(w),\hat{\Phi}|_{Z}\circ s_j(w))\\
&=& \del^{\a}\dbar^{\b}H_w(s_i(w),s_j(w))\\
&=& JH_w(\del^{\a}s_i|_w,\del^{\b}s_j|_w)
\Eea 
verifying that $J^{(k)}\hat{\Phi}|_{Z}$ is a bundle automorphism of the {\h} $J^{(k)}E_{\HM}|_{Z}\overset{\pi_k}\ra Z$. In the above calculation third equality holds as $\hat{\Phi}$ is a bundle automorphism. 

Thus $J^{(k)}E_{\HM}|_{Z}\overset{\pi_k}\ra Z$ is homogeneous \w $G_{Z}$ as {\h} with the hermitian metric $JH(w)=JK(\ov{w},\ov{w})$ where $K$ is the reproducing kernel of $\HM$ and $JK$ is as in \eqref{repk}. Consequently, following the proof of the Theorem \ref{homeqgc} note that $JK|_{\text{res}Z^*}$ is quasi-invariant \w the group $G_{Z^*}$ which is nothing else but the group $(G_Z)_c$. Therefore, using Theorem $\ref{homeqac}$ we conclude that compression of the adjoint of the multiplication operator $J\textbf{M}$ on $J(\HM)|_{\text{res}Z^*}$ is homogeneous \w the group $(G_{Z})_c$. Then the proof follows from the fact that the compression of the tuple of multiplication operators $J\textbf{M}$ onto $J(\HM)|_{\text{res}Z^*}$ and the compression of the tuple of multiplication operators $\textbf{M}$ onto the quotient module $\HM_q$ are unitarily equivalent (cf. \cite[Theorem 4.5]{OUIOQM}).
\end{proof}

\begin{rem}
It is to be noted that homogeneity of the tuple of multiplication operators on the quotient space does not depend on the choice of jets provided the tuple of multiplication operators on the given Hilbert module is homogeneous.
\end{rem}

We now move on to the case of homogeneity of the tuple of multiplication operators on the quotient modules obtained from submodules of functions vanishing up to higher order along $\mathsf{Z}$. In this regard, we first prove that the homogeneity of the multiplication operators on such quotient space is independent of change of variables. Let $\theta:\O_1\ra \O_2$ be a biholomorphism between two bounded domains  $\O_1$ and $\O_2$ of $\C^m$, and $\HM_1$ be a reproducing kernel Hilbert module on $\O_1$ over $\A(\O_1)$ with an $r\times r$ matrix valued reproducing kernel $K_1$ on which the $m$-tuples of multiplication operators $\M=(M_{z_1},\hdots,M_{z_m})$ by co-ordinate functions are bounded. Define $$\HM_2:=\theta^*(\HM_1):=\{f\circ \theta^{-1}:f\in\HM_1\}.$$ It is then evident that $\HM_2$ is a reproducing kernel Hilbert module over $\A(\O_2)$ with 
\beq\label{K_2}K_2(\tilde{z},\tilde{w})=K_1(\theta^{-1}(\tilde{z}),\theta^{-1}(\tilde{w})),~~~\text{for}~~~\tilde{z},\tilde{w}\in\O_2\eeq as the reproducing kernel. In this set up, we prove the following proposition showing that $K_1$ is quasi-invariant \w some subgroup $G$ of the automorphism group of $\O_1$ if and only if so is $K_2$ \w the subgroup $G_{\theta}:=\{\Phi_{\theta}=\theta\circ\Phi\circ\theta^{-1}:\Phi\in G\}$.

\begin{prop}\label{eqhom}
Let $\theta:\O_1\ra \O_2$ be a biholomorphism between two bounded domains  $\O_1$ and $\O_2$ of $\C^m$, and $\HM_1$ be a reproducing kernel Hilbert module on $\O_1$ over $\A(\O_1)$ with an $r\times r$ matrix valued reproducing kernel $K_1$ on which the $m$-tuples of multiplication operators $\M$ by co-ordinate functions are bounded. Also, assume that either $\M^*\in \mathrm B_r(\O_1^*)$, or, $\C[\bz]\otimes\C^r\subset \HM_1$ is dense. Then $\M$ is $G-$homogeneous \w some subgroup $G$ of automorphism group of $\O_1$ if and only if the $m$-tuple of multiplication operators by the co-ordinate functions on $\theta^*(\HM_1)$  is $G_{\theta}-$homogeneous where $G_{\theta}:=\{\theta\circ\Phi\circ\theta^{-1}:\Phi\in G\}$.
\end{prop}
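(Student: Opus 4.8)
The plan is to transfer the problem, via Theorem \ref{homeqac}, into a statement about quasi-invariance of reproducing kernels, then move quasi-invariance across $\theta$ by an explicit change of cocycle, and finally read the result back through Theorem \ref{homeqac} on the other side. Since $\HM_1$ satisfies the standing hypothesis ($\M^{*}\in\mathrm B_r(\O_1^{*})$ or $\C[\bz]\otimes\C^r$ dense in $\HM_1$), Theorem \ref{homeqac} says that $\M$ is $G$-homogeneous if and only if $K_1$ is quasi-invariant with respect to $G$; that is, there is a continuous $J:G\times\O_1\to\mathrm{GL}(r,\C)$, holomorphic in the second variable, with $J_{\Phi\Psi}(z)=J_{\Psi}(z)J_{\Phi}(\Psi z)$ and $K_1(z,w)=J_{\Phi}(z)K_1(\Phi z,\Phi w)J_{\Phi}(w)^{*}$ for all $\Phi,\Psi\in G$ and $z,w\in\O_1$. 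It then suffices to show that this is equivalent to quasi-invariance of $K_2(\tilde z,\tilde w)=K_1(\theta^{-1}\tilde z,\theta^{-1}\tilde w)$ (see \eqref{K_2}) with respect to $G_{\theta}$, and to apply Theorem \ref{homeqac} once more to $\HM_2:=\theta^{*}(\HM_1)$ with the group $G_{\theta}$.

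For the transfer of quasi-invariance I would use that $\Phi\mapsto\Phi_{\theta}:=\theta\circ\Phi\circ\theta^{-1}$ is a group isomorphism of $G$ onto $G_{\theta}$ and that $\theta^{-1}\circ\Phi_{\theta}=\Phi\circ\theta^{-1}$. Given a cocycle $J$ for $K_1$ as above, set $\hat J_{\Phi_{\theta}}(\tilde z):=J_{\Phi}(\theta^{-1}\tilde z)$, which is continuous on $G_{\theta}\times\O_2$, holomorphic in $\tilde z$ and $\mathrm{GL}(r,\C)$-valued; a routine substitution using $\theta^{-1}\circ\Phi_{\theta}=\Phi\circ\theta^{-1}$ verifies the cocycle identity for $\hat J$ over $G_{\theta}$ together with
\[
K_2(\tilde z,\tilde w)=J_{\Phi}(\theta^{-1}\tilde z)\,K_1\bigl(\Phi\theta^{-1}\tilde z,\Phi\theta^{-1}\tilde w\bigr)\,J_{\Phi}(\theta^{-1}\tilde w)^{*}=\hat J_{\Phi_{\theta}}(\tilde z)\,K_2(\Phi_{\theta}\tilde z,\Phi_{\theta}\tilde w)\,\hat J_{\Phi_{\theta}}(\tilde w)^{*}.
\]
Running the same construction with $\theta^{-1}$ in place of $\theta$, and noting $(\theta^{-1})^{*}(\HM_2)=\HM_1$ and $(G_{\theta})_{\theta^{-1}}=G$, gives the reverse implication, so $K_1$ is quasi-invariant with respect to $G$ if and only if $K_2$ is quasi-invariant with respect to $G_{\theta}$.

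It remains to invoke Theorem \ref{homeqac} on the $\O_2$-side to convert quasi-invariance of $K_2$ into $G_{\theta}$-homogeneity of the tuple of coordinate multiplication operators on $\HM_2$, after which chaining the three equivalences proves the proposition. The point requiring care — and, I expect, the main obstacle — is that Theorem \ref{homeqac} applies to $\HM_2$ only once we know $\HM_2$ itself satisfies one of the two standing hypotheses. The unitary $U_{\theta}:\HM_1\to\HM_2$, $f\mapsto f\circ\theta^{-1}$, is the tool here: a short computation gives $U_{\theta}^{*}M_{\tilde z_j}U_{\theta}=M_{\theta_j}$ on $\HM_1$, where $\tilde z_j$ are the coordinate functions on $\O_2$; hence $K_2$ is non-degenerate because $K_1$ is, and in the case $\M^{*}\in\mathrm B_r(\O_1^{*})$ the Cowen--Douglas membership passes to $\HM_2$ since it amounts to a condition on the reproducing kernel preserved by the reparametrization \eqref{K_2} (\cite{GBKCD}); in the case where $\C[\bz]\otimes\C^r$ is dense in $\HM_1$ one instead pushes density through $U_{\theta}$ and uses that $\theta,\theta^{-1}$ are biholomorphic, via a normal-families argument, to recover density of $\C[\bz]\otimes\C^r$ in $\HM_2$. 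With the hypothesis in force on both sides, the $\theta\leftrightarrow\theta^{-1}$ symmetry already used makes the final equivalence manifest.
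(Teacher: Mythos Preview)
Your proposal is correct and follows essentially the same route as the paper: convert homogeneity to quasi-invariance via Theorem \ref{homeqac}, transport the cocycle through $\theta$ by setting $J_2(\Phi_\theta,\tilde z)=J_1(\Phi,\theta^{-1}\tilde z)$, verify the cocycle identity and the transformation rule for $K_2$, then reverse roles for the converse. The only notable difference is that you explicitly flag and attempt to justify the applicability of Theorem \ref{homeqac} on the $\HM_2$ side, whereas the paper simply invokes it without comment; your caution here is warranted, though your sketch for the polynomial-density case (``normal-families argument'') is vague and would need more detail to be convincing, since $U_\theta$ carries polynomials to $\{p\circ\theta^{-1}\}$, which are not polynomials in general.
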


\begin{proof}
Suppose that $\M$ is $G-$homogeneous. Consequently, $K_1$ is quasi-invariant \w $G$, that is, there is a co-cycle $J_1:G\times \O_1\ra \C^{r\times r}$ such that
\beq\label{cocycle K_1}K_1(z,w)=J_1(\Phi,z)K_1(\Phi(z),\Phi(w))J_1(\Phi,w)^*,\eeq for $z,w\in \O_1$ and $\Phi\in G$. We then have from the definition of $K_2$ in \eqref{K_2} that
$$K_2(\tilde{z},\tilde{w})  =  J_1(\Phi,\theta^{-1}(\tilde{z}))K_2(\Phi_{\theta}(\tilde{z}),\Phi_{\theta}(\tilde{w}))J_1(\Phi,\theta^{-1}(\tilde{w}))^*$$ 
where we denote $\theta\circ\Phi\circ\theta$ by $\Phi_{\theta}$. Now define the function $J_2:G_{\theta}\times \O_2\ra\C^{r\times r}$ by $$J_2(\Phi_{\theta},\tilde{z}):=J_1(\Phi,\theta^{-1}(\tilde{z}))$$ and observe that $J_2$ is a cocycle for the group $G_{\theta}$. Indeed, Since $J_1$ is a cocycle and $\theta$ is a biholomorphism,  $J_2(\Phi_{\theta},.)$ is holomorphic for each $\Phi_{\theta}\in G_{\theta}$ and for $\Phi_{\theta},\Psi_{\theta}\in G_{\theta}$ and $\tilde{z}\in \O_2$, satisfies the co-cycle identity:
$$J_2(\Phi_{\theta}\Psi_{\theta},\tilde{z}) = J_1(\Phi\Psi,\theta^{-1}(\tilde{z})) = J_2(\Psi_{\theta},\tilde{z})J_2(\Phi_{\theta},\Psi_{\theta}(\tilde{z}))$$
where the last equality follows from \eqref{cocycle K_1}. Hence $K_2$ is quasi-invariant verifying that the $m$-tuple of multiplication operators by co-ordinate functions on $\theta^*(\HM)$ is $G_{\theta}-$homogeneous.

Conversely, let $K_2$ be quasi-invariant \w $G_{\theta}$. Then there exists a co-cycle $J_2:G_{\theta}\times \O_2\ra\C^{r\times r}$ such that
$$K_2(\tilde{z},\tilde{w})=J_2(\Phi_{\theta},\tilde{z})K_2(\Phi_{\theta}(\tilde{z}),\Phi_{\theta}(\tilde{w}))J_2(\Phi_{\theta},\tilde{w})^*.$$ Then a similar argument as above with $J_1(\Phi,z)=J_2(\Phi_{\theta},\theta{z})$, for $\Phi\in G$ and $z\in \O_1$, yields that $K_1$ is quasi-invariant \w the group $G$. As a consequence, with the help of Theorem $\ref{homeqac}$ we have that $\M$ is $G-$homogeneous.
\end{proof}

\begin{rem}\label{z^*}
For a bounded domain $\O \subset \C^m$, note that if $\mathsf{Z}\subset \O$ is a connected complex submanifold of codimension $d$, then so is the subset $\mathsf{Z}^*:=c(\mathsf{Z})=\{\ov{z}:z \in \mathsf{Z}\}$ of $\O^*$. Indeed, take any point $\ov{p} \in \mathsf{Z}^*$ with $p \in \mathsf{Z}$. Since $\mathsf{Z}$ is a complex submanifold of codimension $d$ in $\O$ there exists a neighbourhood $U$ around $p$ in $\O$ and $d$ holomorphic functions $f_1,\hdots,f_d$ on $U$ such that $\mathsf{Z}\cap U=\{z \in U:f_j(z)=0, ~ 1 \leq j \leq d\}$. It is then evident that $\mathsf{Z}^*\cap U^*=\{\ov{z} \in U^*:g_j(\ov{z})=0, ~ 1 \leq j \leq d\}$ where $U^*=c(U)$ and $g_j=\ov{f \circ c^{-1}}$, $1 \leq j \leq d$. So $\mathsf{Z}^*$ is a complex submanifold as $g_j$'s are holomorphic on $U^*$ for $1 \leq j \leq d$. Furthermore, since $c$ is a homeomorphism and $\mathsf{Z}$ is connected so is $\mathsf{Z}^*$.
\end{rem}

\begin{thm}\label{thmhom}
Let $\O$ be a bounded domain in $\C^m$ and $\mathsf{Z}\subset \O$ be a connected complex submanifold of codimension $d \geq 1$ such that there exists a biholomorphism $\theta:\O\ra\C^m$ onto its image of the form $\theta(z)=(\theta_1(z),\hdots,\theta(z_d),z_{d+1},\hdots,z_m)$ such that $\theta(\mathsf{Z})=Z$ where $Z$ is the co-ordinate plane $\{\l=(\l_1,\hdots,\l_m)\in\theta(\O):\l_1=\cdots=\l_d=0\}$. Let $(\HM,\HM_q)$ be a pair of Hilbert modules in $\mathrm B_{r,k}(\O,\mathsf{Z})$ and $G$ be a subgroup of $\aut(\O,\mathsf{Z})$. Also, assume that the $m$-tuple of multiplication operators $\textbf{M}$ by co-ordinate functions on $\HM$ is $G_c$-homogeneous where $c:\O \ra \O^*$ is the conjugation mapping and $G_c =\{g_c=c \circ g \circ c^{-1}: g \in G\}\subset \aut(\O^*,\mathsf{Z}^*)$ with $\mathsf{Z}^*=c(\mathsf{Z})$. Then the compression of the multiplication operator $\textbf{M}$ onto the quotient module $\HM_q$ is homogeneous \w the subgroup $(G_{\mathsf{Z}})_c:=\{\phi_c : \phi \in G_{\mathsf{Z}}\}$ where $G_{\mathsf{Z}}=\{\phi \in \aut(\mathsf{Z}):\phi = \Phi|_{\mathsf{Z}},~\text{for some }\Phi \in G\}$.
\end{thm}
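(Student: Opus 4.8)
The plan is to conjugate the whole situation by the global admissible chart $\theta$ so as to reduce to Theorem \ref{thmhom0}, apply that theorem, and then conjugate back via Proposition \ref{eqhom}. Since the conjugation map $c$ on any domain is an involution, the bookkeeping with groups will telescope and the output group will come out to be exactly $(G_{\mathsf Z})_c$.

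First I would transport the data along $\theta$. Put $\widetilde\O:=\theta(\O)$ and let $\vartheta:\O^*\ra\widetilde\O^{\,*}$ be the biholomorphism of conjugate domains induced by $\theta$, concretely $w\mapsto\overline{\theta(\bar w)}$. Set $\widetilde\HM:=\vartheta^*(\HM)$ and $\widetilde\HM_q:=\vartheta^*(\HM_q)$. Because $\theta$ has the triangular form $\theta(z)=(\theta_1(z),\dots,\theta_d(z),z_{d+1},\dots,z_m)$ and carries $\mathsf Z$ onto the coordinate plane $Z$, the submodule of functions vanishing to order $k$ along $\mathsf Z$ is carried onto the submodule of $\widetilde\HM$ of functions vanishing to order $k$ along $Z$; combined with the compatibility of the jet construction with charts of this form (Proposition 3.10 and Section 4 of \cite{OUIOQM}), this should yield $(\widetilde\HM,\widetilde\HM_q)\in\mathrm B_{r,k}(\widetilde\O,Z)$. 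I would also set $\widetilde G:=\theta G\theta^{-1}$; since every element of $G$ fixes $\mathsf Z$, every element of $\widetilde G$ fixes $Z$, so $\widetilde G\subset\aut(\widetilde\O,Z)$.

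Next I would feed this into the coordinate-plane case. Proposition \ref{eqhom}, applied to $\vartheta$ (its hypothesis holds because $\HM\in\mathrm B_r(\O)$, hence $\M^*\in\mathrm B_r(\O)$), shows that the $m$-tuple of multiplication operators $\widetilde\M$ on $\widetilde\HM$ is $(G_c)_\vartheta$-homogeneous, and a short computation with $c$ identifies $(G_c)_\vartheta$ with $(\widetilde G)_c$. Then Theorem \ref{thmhom0}, applied to $(\widetilde\HM,\widetilde\HM_q)\in\mathrm B_{r,k}(\widetilde\O,Z)$ with the group $\widetilde G\subset\aut(\widetilde\O,Z)$, gives that the compression of $\widetilde\M$ onto $\widetilde\HM_q$ is homogeneous with respect to $((\widetilde G)_Z)_c$, where $(\widetilde G)_Z=\{\Psi|_Z:\Psi\in\widetilde G\}$. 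Finally I would conjugate back along the biholomorphism $\theta|_{\mathsf Z}:\mathsf Z\ra Z$: since each $\Phi\in G$ fixes $\mathsf Z$, we have $(\theta\Phi\theta^{-1})|_Z=(\theta|_{\mathsf Z})(\Phi|_{\mathsf Z})(\theta|_{\mathsf Z})^{-1}$, so $(\widetilde G)_Z=(\theta|_{\mathsf Z})\,G_{\mathsf Z}\,(\theta|_{\mathsf Z})^{-1}$; a second application of Proposition \ref{eqhom} — now to the quotient modules, regarded via the jet identification built into Definition \ref{CDq} as reproducing kernel Hilbert modules over $\A(\O^*)|_{\mathsf Z^*}$ and $\A(\widetilde\O^{\,*})|_{Z^*}$ in the Cowen--Douglas class, which is where Definition \ref{CDq}(3) enters — transfers the homogeneity back to $\HM_q$, with the group $((\widetilde G)_Z)_c$ pulled back along $\theta|_{\mathsf Z}$; tracking this through the two conjugations, everything telescopes to $(G_{\mathsf Z})_c$.

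I expect the real obstacle to be the technical content of the first step: checking that pushing a pair in $\mathrm B_{r,k}(\O,\mathsf Z)$ forward along $\theta$ genuinely lands in $\mathrm B_{r,k}(\widetilde\O,Z)$ — that all three conditions of Definition \ref{CDq} survive and, in particular, that the jet construction relative to $\mathsf Z$ matches the one relative to $Z$ after transport. This hinges on $\theta$ mixing only the first $d$ coordinates, so that the notion of vanishing to order $k$ passes over verbatim, together with invertibility of the relevant blocks of the Jacobian — precisely the mechanism used for the map $F_2$ in the proof of Theorem \ref{jethom} and in Proposition 3.10 of \cite{OUIOQM}. A secondary nuisance is that $\widetilde\O=\theta(\O)$ need not be bounded; if this matters one restricts to a relatively compact subdomain meeting $Z$, or simply notes that Theorem \ref{thmhom0} and the underlying Cowen--Douglas and jet machinery go through verbatim on the image domain. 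The remaining work — chasing the conjugations by $c$ and by $\theta|_{\mathsf Z}$ through the various groups — is routine since $c$ is an involution.
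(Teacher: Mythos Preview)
Your proposal is correct and follows essentially the same route as the paper: transport the pair along $\theta_c$ (your $\vartheta$) using Proposition \ref{eqhom} and \cite[Proposition 3.10]{OUIOQM} to land in $\mathrm B_{r,k}(\theta(\O),Z)$, apply Theorem \ref{thmhom0} there, identify $((G_\theta)_Z)_c$ with $((G_{\mathsf Z})_\theta)_c$, and pull back via Proposition \ref{eqhom}. Your bookkeeping is in fact a bit more explicit than the paper's---you flag the boundedness of $\theta(\O)$ and the role of Definition \ref{CDq}(3) in the second application of Proposition \ref{eqhom}, both of which the paper passes over silently.
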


\begin{proof}
We begin by pointing out from the Remark \ref{z^*} that $c(\mathsf{Z})=\mathsf{Z}^*$ is a connected complex submanifold of $\O^*=c(\O)$, and $\theta_c=c\circ\theta\circ c^{-1}$ is the corresponding biholomorphism sending $\O^*$ onto $\theta(\O)^*$ as well as $\mathsf{Z}^*$ onto $Z^*=c(Z)$.

Since $\M$ is $G_c-$homogeneous it follows from Proposition \ref{eqhom} that so is the $m$-tuple of multiplication operators by the co-ordinate functions on $\theta_c^*(\HM)$ under the action of the group $(G_{\theta})_c$. Moreover, $\theta(\O)^*$ contains the origin and $Z^*$ is also the co-ordinate plane $\{\ov{\l}\in\theta(\O)^*:\l_1=\cdots=\l_d=0\}$. It follows from \cite[Proposition 3.10]{OUIOQM} that the quotient module $\HM_q$ is unitarily equivalent to $\theta_c^*(\HM_q)$ as Hilbert modules which happens to be the quotient module corresponding to the submodule of functions in $\theta_c^*(\HM)$ vanishing of order $k$ along the co-ordinate plane $Z^*$. Consequently, the pair $((\theta_c^*(\HM)),\theta_c^*(\HM_q))$ is in $\mathrm B_{r,k}(\theta(\O),Z)$ since $(\HM,\HM_q)$ is in $\mathrm B_{r,k}(\O,\mathsf{Z})$. Therefore, it follows from Theorem \ref{thmhom0} that the compression of the multiplication operator onto the quotient module $\theta_c^*(\HM_q)$ is $((G_{\theta})_Z)_c-$homogeneous where $((G_{\theta})_Z)_c=\{\phi\in \aut(Z):\phi=\Phi_{\theta}|_{Z},\Phi_{\theta}\in G_{\theta}\}$. We note that the subgroup $((G_{\theta})_Z)_c$ is nothing else but the group $((G_{\mathsf{Z}})_{\theta})_c$. So the proof follows from Proposition \ref{eqhom}.
\end{proof}

Let $\a=(\a_1,\hdots,\a_m)$ with $\a_j > 0$, $1 \leq j \leq m$ and $\H^{(\a)}(\D^m)$ be the weighted Bergman module over $\D^m$ with the reproducing kernel 
$$K^{(\a)}(\textbf{z},\textbf{w})=\prod_{j=1}^m(1-z_j\ov{w}_j)^{-\a_j}, \text{ for }\textbf{z},\textbf{w} \in \D^m.$$
Let $\Delta_d$ be the submanifold  $\Delta_d:= \{\textbf{z} \in \D^m: z_1=\cdots=z_d\}$ of $\D^m$. Since $(\D^m)^*=\D^m$ and $\Delta_d^*=\Delta_d$ we identify $(\D^m)^*$ with $\D^m$ and $\Delta_d^*$ with $\Delta_d$ via the conjugation mapping $c:\D^m \ra (\D^m)^*$ defined by $\textbf{z} \mapsto \ov{\textbf{z}}$. So we consider the submodule $\H^{(\a)}_0$ consisting of functions in $\H^{(\a)}(\D^m)$ vanishing along $\Delta_d$ of order $k$. Then using the jet construction we have that the quotient module $\H^{(\a)}_q:=\H^{(\a)}(\D^m)\ominus\H^{(\a)}_0$ is module isomorphic to the module of $k$-jets, $J(\H^{(\a)}(\D^m))|_{\text{res}\Delta_d}$, relative to $\Delta_d$. It turns out that $(\H^{(\a)}(\D^m),\H^{(\a)}_q)$ is in $\mathrm B_{1,k}(\D^m,\Delta_d)$ (cf. \cite[Theorem 4.8]{OUIOQM}) . 

Let $\textbf{M}^{(\a)}=(M_{z_1},\hdots,M_{z_m})$ be the $m$-tuple multiplication operators by co-ordinate functions on $\H^{(\a)}$. Then the compression of $\textbf{M}^{(\a)}$ onto $\H^{(\a)}_q$ is unitarily equivalent to the multiplication operator on $J(\H^{(\a)})|_{\text{res}\Delta_d}$ (cf. \cite[Theorem 4.5]{OUIOQM}). From now on, we denote the compression of $\textbf{M}^{(\a)}$ onto $\H^{(\a)}_q$ as $\textbf{M}^{(\a)}_k$. The following corollary yields a special case of the previous theorem.  

\begin{cor}\label{corhom}
The operators $\textbf{M}^{(\a)}_k$ are homogeneous \w the group M\"ob$(\D)^{m-d+1}$, for $k \in \N$ and $\a=(\a_1,\hdots,\a_m)$ with $\a_j \geq 0$, $j=1,\hdots,m$.
\end{cor}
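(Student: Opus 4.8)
The plan is to derive the corollary as a special case of Theorem~\ref{thmhom}, taking $\O=\D^m$, $\mathsf{Z}=\Delta_d$ (a connected complex submanifold of $\D^m$ of codimension $d-1$), $\HM=\H^{(\a)}(\D^m)$ and $\HM_q=\H^{(\a)}_q$. We have already recorded that $(\H^{(\a)}(\D^m),\H^{(\a)}_q)\in\mathrm B_{1,k}(\D^m,\Delta_d)$, and that $(\D^m)^*=\D^m$ while $\Delta_d^*=\Delta_d$, so the conjugation $c$ is coordinatewise complex conjugation. What remains to be supplied is: a biholomorphism $\theta$ of the shape demanded in Theorem~\ref{thmhom} which straightens $\Delta_d$ to a coordinate plane; a subgroup $G\subset\aut(\D^m,\Delta_d)$ with respect to which $\textbf{M}^{(\a)}$ is $G_c$-homogeneous; and the identification of the resulting group $(G_{\Delta_d})_c$ with M\"ob$(\D)^{m-d+1}$.

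For the first point I would take the linear map $\theta(z)=(z_1-z_2,\,z_2-z_3,\,\ldots,\,z_{d-1}-z_d,\,z_d,\,z_{d+1},\,\ldots,\,z_m)$. It is an isomorphism of $\C^m$, hence a biholomorphism onto the bounded domain $\theta(\D^m)$, it fixes the origin, it has the required form $\theta(z)=(\theta_1(z),\ldots,\theta_{d-1}(z),z_d,\ldots,z_m)$, and on $\Delta_d$ the first $d-1$ components vanish while, conversely, the vanishing of those components forces $z_1=\cdots=z_d$; thus $\theta(\Delta_d)=\{\l\in\theta(\D^m):\l_1=\cdots=\l_{d-1}=0\}$ is a coordinate plane.

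For the second point I would let $G$ be the subgroup of M\"ob$(\D)^m$ (acting on $\D^m$ coordinatewise) consisting of those tuples $(\phi,\phi,\ldots,\phi,\phi_{d+1},\ldots,\phi_m)$ whose first $d$ entries coincide, so that $g\cdot\bz=(\phi(z_1),\ldots,\phi(z_d),\phi_{d+1}(z_{d+1}),\ldots,\phi_m(z_m))$; each such $g$ preserves $\Delta_d=\{z_1=\cdots=z_d\}$, whence $G\subset\aut(\D^m,\Delta_d)$ and $G\cong{}$M\"ob$(\D)^{m-d+1}$, and since $c$ carries each M\"obius factor to a M\"obius map and respects the condition that the first $d$ entries coincide we have $G_c=G$. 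To see that $\textbf{M}^{(\a)}$ is $G_c$-homogeneous I would invoke Theorem~\ref{homeqac}: $\textbf{M}^{(\a)}$ is bounded on $\H^{(\a)}(\D^m)$ and $\C[\bz]$ is dense there, so it is enough that $K^{(\a)}$ be quasi-invariant with respect to $G_c=G$; but $K^{(\a)}(\bz,\bw)=\prod_{j=1}^m(1-z_j\ov{w}_j)^{-\a_j}$ is a product of one-variable weighted Bergman kernels, each quasi-invariant under M\"ob$(\D)$ (the classical homogeneity of the weighted Bergman spaces over $\D$, see \cite{ACHOCD, CABSO}), so the coordinatewise product of the corresponding cocycles shows $K^{(\a)}$ quasi-invariant under all of M\"ob$(\D)^m$, in particular under $G$.

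For the third point, parametrizing $\Delta_d$ by $\D^{m-d+1}$ via $(w,w_{d+1},\ldots,w_m)\mapsto(w,\ldots,w,w_{d+1},\ldots,w_m)$, the restriction to $\Delta_d$ of $(\phi,\ldots,\phi,\phi_{d+1},\ldots,\phi_m)\in G$ is the coordinatewise map $(w,w_{d+1},\ldots,w_m)\mapsto(\phi(w),\phi_{d+1}(w_{d+1}),\ldots,\phi_m(w_m))$, so $G_{\Delta_d}$, and hence $(G_{\Delta_d})_c$, is exactly M\"ob$(\D)^{m-d+1}$ acting coordinatewise on $\D^{m-d+1}\cong\Delta_d$, and in particular transitively. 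Theorem~\ref{thmhom} now gives that the compression of $\textbf{M}^{(\a)}$ onto $\H^{(\a)}_q$, that is, $\textbf{M}^{(\a)}_k$, is homogeneous with respect to M\"ob$(\D)^{m-d+1}$, which is the assertion. I expect the only point requiring care to be the group bookkeeping in the last two steps --- determining precisely which automorphisms of $\D^m$ stabilize $\Delta_d$ (the M\"obius factors attached to the first $d$ coordinates must be equal) and checking that the restriction of that subgroup to $\Delta_d$ still fills out all of M\"ob$(\D)^{m-d+1}$; the quasi-invariance of $K^{(\a)}$ needed on the way is immediate from the one-variable case together with the product structure of the kernel.
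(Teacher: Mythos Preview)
Your proposal is correct and follows essentially the same route as the paper's proof: both apply Theorem~\ref{thmhom} to $(\H^{(\a)}(\D^m),\H^{(\a)}_q)\in\mathrm B_{1,k}(\D^m,\Delta_d)$ with $G$ the stabilizer of $\Delta_d$ inside M\"ob$(\D)^m$, and then identify $(G_{\Delta_d})_c$ with M\"ob$(\D)^{m-d+1}$ via the obvious parametrization of $\Delta_d$ by $\D^{m-d+1}$. You are simply more explicit than the paper on a few points---you write down a concrete straightening map $\theta$, spell out the quasi-invariance of $K^{(\a)}$ from the one-variable case, and describe the stabilizer explicitly as tuples whose first $d$ M\"obius factors coincide (this is in fact exactly the paper's $G=\{\Phi\in\text{M\"ob}(\D)^m:\Phi(\Delta_d)=\Delta_d\}$, since preserving $z_1=\cdots=z_d$ forces $\phi_1=\cdots=\phi_d$).
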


\begin{proof}
Since $\textbf{M}^{(\a)}$ on $\H^{(\a)}(\D^m)$ is M\"ob$^m$-homogeneous and $(\H^{(\a)}(\D^m),\H^{(\a)}_q)$ is in $\mathrm B_{1,k}(\D^m,\Delta_d)$, it follows from Theorem $\ref{thmhom}$ that $\textbf{M}^{(\a)}_k$ is $(G_{\Delta_d})_c$-homogeneous where $G_{\Delta_d}=\{\phi \in \text{Aut}_0(\Delta_d):\phi = \Phi|_{\Delta_d} \text{ for some } \Phi \in G\}$ and $G:=\{\Phi \in \text{M\"ob}(\D)^m:\Phi(\Delta_d)=\Delta_d\}$. Since $\Delta_d$ is biholomorphic to $\D^{m-d+1}$ via the biholomorphism $\textbf{z}\mapsto (z_1,z_{d+1},\hdots,z_m)$, it follows that the group $G_{\Delta_d}$ is the group M\"ob$(\D)^{m-d+1}$ and M\"ob$(\D)=$M\"ob$(\D)_c$ with the identification $\D =\D^*$. Hence this verifies the claim. 
\end{proof}

\section{Reducibility of quotient modules}\label{qmreducibility}

Let $\a=(\a_1,\hdots,\a_m)$ with $\a_j > 0$, $1 \leq j \leq m$ and $\H^{(\a)}(\D^m)$ be the weighted Bergman module over the disc algebra $\A(\D^m)$ with the reproducing kernel 
\beq\label{ker}K(\bz,\bw):= K^{(\a)}(\bz,\bw) = \prod_{j=1}^m(1-z_j\ov{w}_j)^{-\a_j},~~~\bz,\bw \in \D^m.\eeq
Then the tuple of multiplication operators $\M=(M_{z_1},\hdots,M_{z_m})$ on $\H^{(\a)}(\D^m)$ is in $\mathrm B_1(\D^m)$. So it corresponds the {\hl} $E \overset{\pi}{\ra}\D^m$ with the global holomorphic frame $s(\bw):=K(.,\ov{\bw}),\,\,\bw \in \D^m$. We denote $\H^{(\a)}(\D^m)$ by the letter $\HM$ and consider the submodule $\HM^{(n)}$ of functions in $\HM$ vanishing of order $n$ along the diagonal set $\Delta =\{\bz=(z_1,\hdots,z_m) \in \D^m: z_1=\hdots=z_m\}$. Let $\HM^{(n)}_q = \HM \ominus \HM^{(n)}$ be the quotient module. It then follows from the jet construction mentioned in Section $\ref{JC}$ that $\HM^{(n)}_q$ is unitarily equivalent to the module of jets, $J^{(n)}\HM|_{\text{res}\Delta}$, of order upto $n$ restricted to $\Delta$. Moreover, since $K$ is a reproducing kernel with the diagonal power series expansion, $\HM^{(n)}_q$ is in $B_{N}(\D)$ with $N$ is the cardinality of the set $A=\{\a=(\a_1,\hdots,\a_m)\in\R_{>0}^m:|\a|<n\}$, thanks to \cite[Theorem 4.10]{OUIOQM}, and consequently, $J^{(n)}\HM|_{\text{res}\Delta}$ corresponds to the {\h} $\mathcal{E}$ over $\D$. Let $J^{(n)}E|_{\Delta}\ra \Delta$ be  the jet bundle of order $n$ relative to $\Delta$ associated with the line bundle $E \overset{\pi}{\ra}\D^m$ with the global holomorphic frame 
$\{\del^{\a}s(w):|\a|<n\}$ for $w\in\Delta.$ These two vector bundles, namely $\mathcal{E}\ra \D$ and $J^{(n)}E|_{\Delta}\ra \Delta$, under the canonical identification of $\D$ and $\Delta$, are unitarily equivalent due to the fact that the jet map $J^{(n)}:\HM\ra J^{(n)}(\HM)$ introduced in Section \ref{JC} is a unitary module map sending the quotient module $\HM^{(n)}_q$ isomorphically onto $J^{(n)}\HM|_{\text{res}\Delta}$ as Hilbert modules. Therefore from now on, we consider $J^{(n)}\HM|_{\text{res}\Delta}$ as the vector bundle corresponding to the quoient module $J^{(n)}\HM|_{\text{res}\Delta}$.

In this section, we first prove that the compression of the multiplication operator $\M=(M_{z_1},\hdots,M_{z_m})$ on $\H^{(\a)}(\D^m)$, to the quotient module $\HM^{(n)}_q$ is reducible. We denote this operator as $M^{(\a)}_n$. Further, along the way we completely decompose this operator into irreducible factors and identify the irreducible factors as the \textit{Generalized Wilkins' Operators} (\cite[Page 428]{HOPRMGS}). In order to accomplish our goal, we begin with $m=3$. In this case, we first show that the jet bundle $J^{(n)}E|_{\Delta}\ra\Delta$ correponding to the quotient module $\HM^{(n)}_q$ is orthogonal direct sum of some subbundles which correspond the \textit{Generalized Wilkins' Operators}.

\begin{lem}\label{lem1}
Let $K$ be the reproducing kernel on $\D^3$ defined by 
$$K(\bz,\bw) = (1-z_1\ov{w}_1)^{-\a}(1-z_2\ov{w}_2)^{-\b}(1-z_3\ov{w}_3)^{-\g},~~~\bz,\bw \in \D^3.$$
Then, for any $l,j \in \N \cup \{0\}$ with $0 \leq j \leq l$,
\beq\label{dbar}~~~~
\dbar_2^{l-j}\dbar_3^j K(\bz,\bw) = (\b)_{l-j}(\g)_{j}z_2^{l-j}z_3^j (1-z_1\ov{w}_1)^{-\a}(1-z_2\ov{w}_2)^{-\b-l+j}(1-z_3\ov{w}_3)^{-\g-j}
\eeq
where $(x)_k$ is the Pochhammer symbol defined by $(x)_k=x(x+1)\cdots(x+k-1)$.
\end{lem}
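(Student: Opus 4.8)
The plan is to prove the formula by direct computation, differentiating the explicit product form of the kernel. Observe that $K(\bz,\bw) = (1-z_1\ov w_1)^{-\a}(1-z_2\ov w_2)^{-\b}(1-z_3\ov w_3)^{-\g}$ factors as a product of three functions, each depending on a single pair of variables, so the $\dbar_2$ and $\dbar_3$ derivatives act only on the second and third factors respectively, leaving $(1-z_1\ov w_1)^{-\a}$ untouched. Since $\dbar_j = \partial/\partial\ov w_j$, I would first record the one-variable identity
\[
\frac{\partial^s}{\partial \ov w^{\,s}}\,(1-z\ov w)^{-\mu} = (\mu)_s\, z^{s}\,(1-z\ov w)^{-\mu-s},
\]
which follows by induction on $s$ using the chain rule $\frac{\partial}{\partial\ov w}(1-z\ov w)^{-\mu-t} = (\mu+t)\,z\,(1-z\ov w)^{-\mu-t-1}$ together with the Pochhammer recursion $(\mu)_t(\mu+t) = (\mu)_{t+1}$.

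Applying this one-variable identity with $s = l-j$, $\mu = \b$ to the second factor and with $s = j$, $\mu = \g$ to the third factor, and using that mixed partials in the disjoint variables $\ov w_2$, $\ov w_3$ commute, I obtain
\[
\dbar_2^{l-j}\dbar_3^{j}K(\bz,\bw) = (1-z_1\ov w_1)^{-\a}\cdot(\b)_{l-j}z_2^{l-j}(1-z_2\ov w_2)^{-\b-l+j}\cdot(\g)_{j}z_3^{j}(1-z_3\ov w_3)^{-\g-j},
\]
which, after rearranging the scalar factors $(\b)_{l-j}(\g)_j z_2^{l-j}z_3^{j}$ to the front, is exactly \eqref{dbar}. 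The constraint $0\le j\le l$ is used only to guarantee that all the exponents appearing in the Pochhammer symbols and the differentiation orders are nonnegative integers, so no degenerate cases arise.

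There is essentially no obstacle here: the only point requiring a modicum of care is the bookkeeping of the Pochhammer symbol in the induction for the one-variable lemma, and the observation that differentiation in $\ov w_2$ and $\ov w_3$ does not affect the $z$-variables or the $(1-z_1\ov w_1)^{-\a}$ factor. I would present the one-variable identity as a short sub-claim proved by induction, and then assemble the three factors; the whole argument is a few lines.
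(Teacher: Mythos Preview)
Your proof is correct and follows the same approach as the paper, namely direct computation of the derivatives; in fact, the paper's proof consists of a single sentence stating that the result follows by computing the required derivatives. Your write-up is more detailed than necessary, but nothing is wrong with it.
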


\begin{proof}
Proof follows by computing the required derivatives of $K$ using Leibnitz rule.
\end{proof}

\begin{lem}\label{ddbar}
Let $K$ be the reproducing kernel on $\D^3$ defined by 
$$K(\bz,\bw) = (1-z_1\ov{w}_1)^{-\a}(1-z_2\ov{w}_2)^{-\b}(1-z_3\ov{w}_3)^{-\g}, \text{ }, \bz,\bw \in \D^3.$$
Then, for any $l,j \in \N \cup \{0\}$ with $0 \leq j \leq l$, $k \leq l$ and $\bz,\bw \in \D^3$,
\beq\label{ddbar1}
\nonumber\del_2^k\dbar_2^{l-j}\dbar_3^j K(\bz,\bw) &=& \left[\sum_{t=0}^k{k \choose t}\frac{(l-j)!}{(l-j-k+t)!}\frac{(\b)_k}{(\b)_{k-t}}(z_2\ov{w}_2)^t\right](\b)_{l-j}(\g)_{j}z_2^{l-j-k}z_3^j\times\\
&& (1-z_1\ov{w}_1)^{-\a}(1-z_2\ov{w}_2)^{-\b-(l-j+k)}(1-z_3\ov{w}_3)^{-\g-j}.
\eeq
\end{lem}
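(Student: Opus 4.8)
The plan is to prove Lemma \ref{ddbar} by applying the operator $\del_2^k$ to the closed-form expression for $\dbar_2^{l-j}\dbar_3^j K$ already obtained in Lemma \ref{lem1}, and then invoking the Leibniz rule. Concretely, by Lemma \ref{lem1} we have
$$\dbar_2^{l-j}\dbar_3^j K(\bz,\bw) = (\b)_{l-j}(\g)_{j}\,z_3^j\,(1-z_1\ov{w}_1)^{-\a}(1-z_3\ov{w}_3)^{-\g-j}\cdot\Big[z_2^{l-j}(1-z_2\ov{w}_2)^{-\b-l+j}\Big].$$
Only the bracketed factor depends on $z_2$, so applying $\del_2^k$ reduces to computing $\del_2^k\big(z_2^{l-j}(1-z_2\ov{w}_2)^{-\b-l+j}\big)$. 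Here I would treat $z_2^{l-j}$ and $(1-z_2\ov{w}_2)^{-\b-l+j}$ as the two Leibniz factors.

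The key steps, in order, are: (1) record that $\del_2^t\big(z_2^{l-j}\big)=\dfrac{(l-j)!}{(l-j-t)!}z_2^{l-j-t}$ for $t\le l-j$ (and vanishes otherwise); (2) record that $\del_2^{k-t}\big((1-z_2\ov{w}_2)^{-\b-l+j}\big)=(\b+l-j)_{k-t}\,\ov{w}_2^{\,k-t}(1-z_2\ov{w}_2)^{-\b-l+j-(k-t)}$; (3) combine via $\del_2^k(fg)=\sum_{t=0}^k\binom{k}{t}(\del_2^t f)(\del_2^{k-t}g)$; (4) simplify the Pochhammer bookkeeping, writing $(\b)_{l-j}(\b+l-j)_{k-t}$ in terms of $(\b)_{l-j+k-t}$, and reorganizing so the constant $(\b)_{l-j}$ is pulled out and the remaining factor becomes $\dfrac{(\b)_{l-j+k}}{(\b)_{l-j+k-?}}$ type ratios, matching the sum $\sum_{t}\binom{k}{t}\dfrac{(l-j)!}{(l-j-k+t)!}\dfrac{(\b)_k}{(\b)_{k-t}}(z_2\ov{w}_2)^t$ in the statement (after re-indexing the summation variable, since the paper's $t$ corresponds to my $t$ counting derivatives landing on the power $z_2^{l-j}$); (5) reassemble with the $z_3^j$, $(1-z_1\ov{w}_1)^{-\a}$, $(1-z_3\ov{w}_3)^{-\g-j}$, $(\g)_j$ factors untouched, yielding the exponent $-\b-(l-j+k)$ on the middle factor and the overall $z_2^{l-j-k}$, $z_3^j$ monomials as claimed.

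The only genuine obstacle is bookkeeping: matching the normalization of the Pochhammer ratios and the binomial coefficients so that the raw Leibniz output $\sum_{t}\binom{k}{t}\dfrac{(l-j)!}{(l-j-t)!}(\b+l-j)_{k-t}\,\ov{w}_2^{\,k-t}z_2^{l-j-t}\cdot(1-z_2\ov{w}_2)^{-\b-l+j-(k-t)}$ is rewritten, via the substitution $t\mapsto k-t$ and the identity $(\b)_{l-j}(\b+l-j)_{k-t}=(\b)_{l-j+k-t}=(\b)_k\cdot\dfrac{(\b)_{l-j+k-t}}{(\b)_k}$, into the displayed form with the factor $\dfrac{(\b)_k}{(\b)_{k-t}}$. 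Since $l-j+k-t$ and $k-t$ differ by $l-j$, one has $\dfrac{(\b)_{l-j+k-t}}{(\b)_{k-t}}=(\b+k-t)_{l-j}$, and a little care with the hypothesis $k\le l$ (ensuring no negative factorials) closes the identification. None of this is deep; I would present steps (1)--(2) as one-line derivative computations, note that the rest is "an application of the Leibniz rule together with standard Pochhammer identities," and if desired include the re-indexed sum explicitly to make the match with \eqref{ddbar1} transparent.
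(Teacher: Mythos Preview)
Your Leibniz computation in steps (1)--(3) is correct, but step (5) glosses over the real difficulty. After Leibniz, the $s$-th term carries the factor $(1-z_2\ov{w}_2)^{-\b-(l-j)-(k-s)}$, whose exponent \emph{depends on $s$}. The target formula \eqref{ddbar1}, by contrast, has the single uniform exponent $-\b-(l-j+k)$ outside the sum. So the two expressions are \emph{not} related by the substitution $t\mapsto k-t$ together with Pochhammer bookkeeping; after factoring out $z_2^{l-j-k}(1-z_2\ov{w}_2)^{-\b-(l-j+k)}$ and writing $x=z_2\ov{w}_2$, what you actually need is the polynomial identity
\[
\sum_{s=0}^{k}\binom{k}{s}\frac{(l-j)!}{(l-j-s)!}(\b+l-j)_{k-s}\,x^{k-s}(1-x)^{s}
\;=\;
\sum_{t=0}^{k}\binom{k}{t}\frac{(l-j)!}{(l-j-k+t)!}\frac{(\b)_k}{(\b)_{k-t}}\,x^{t},
\]
which requires expanding each $(1-x)^{s}$ by the binomial theorem and regrouping --- a genuine combinatorial step, not a re-indexing. (Check $k=1$: the left side is $(\b+l-j)x+(l-j)(1-x)=(l-j)+\b x$, matching the right side only after the cancellation in $(1-x)$.)

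The paper avoids this hidden identity by inducting on $k$: assuming \eqref{ddbar1} for $k-1$, a single $\del_2$ is applied and the resulting coefficients are matched directly, so each step is a short algebraic check rather than a global regrouping. Your route can be made to work, but you must either prove the displayed polynomial identity (e.g.\ by comparing coefficients of $x^t$ and invoking a Vandermonde-type sum) or switch to the inductive argument; as written, the ``re-index and use $(\b)_{l-j}(\b+l-j)_{k-t}=(\b)_{l-j+k-t}$'' step does not close the gap.
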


\begin{proof}
We begin by pointing out from Lemma $\ref{lem1}$ that the desired identity holds for $k=0$, namely,
$$\dbar_2^{l-j} \dbar_3^j K(\bz,\bw) = (\b)_{l-j}(\g)_jz_2^{l-j}z_3^j(1-z_1\ov{w}_1)^{-\a}(1-z_2\ov{w}_2)^{-\b-l+j}(1-z_3\ov{w}_3)^{-\g-j},$$
for $0 \leq j \leq l$. Differentiating both sides of this equation \w $z_2$, we have that
\Bea
\del_2\dbar_2^{l-j}\dbar_3^j K(\bz,\bw) 
&=& [(l-j)+\b z_2\ov{w}_2](\b)_{l-j}(\g)_jz_2^{l-j-1}z_3^j\\
&& \times (1-z_1\ov{w}_1)^{-\a}(1-z_2\ov{w}_2)^{-\b-l+j-1}(1-z_3\ov{w}_3)^{-\g-j}.
\Eea
verifying the identity for $k=1$. We now assume that $\eqref{ddbar1}$ holds true for $k-1$. Then we have from $\eqref{ddbar1}$ with $k-1$ instead of $k$,
\Bea
\del_2^{k-1}\dbar_2^{l-j}\dbar_3^j K(\bz,\bw) &=& \left[\sum_{t=0}^{k-1}{k-1 \choose t}\frac{(l-j)!}{(l-j-k+t+1)!}\frac{(\b)_{k-1}}{(\b)_{k-1-t}}(z_2\ov{w}_2)^t\right](\b)_{l-j}(\g)_{j}\times\\
&& z_2^{l-j-k+1}z_3^j (1-z_1\ov{w}_1)^{-\a}(1-z_2\ov{w}_2)^{-\b-(l-j+k-1)}(1-z_3\ov{w}_3)^{-\g-j}.
\Eea
Differentiating both sides \w $z_2$ we get
\Bea
\del_2^k\dbar_2^{l-j}\dbar_3^j K(\bz,\bw) &=& \left[\sum_{t=0}^{k-1}{k-1 \choose t}\frac{(l-j)!}{(l-j-k+t+1)!}\frac{(\b)_{k-1}}{(\b)_{k-1-t}}t z_2^{t-1}\ov{w}_2^t\right](\b)_{l-j}(\g)_{j}\times\\
&& z_2^{l-j-k+1}z_3^j (1-z_1\ov{w}_1)^{-\a}(1-z_2\ov{w}_2)^{-\b-(l-j+k-1)}(1-z_3\ov{w}_3)^{-\g-j}\\
&+& \left[\sum_{t=0}^{k-1}{k-1 \choose t}\frac{(l-j)!}{(l-j-k+t+1)!}\frac{(\b)_{k-1}}{(\b)_{k-1-t}}(z_2\ov{w}_2)^t\right]\times\\
&&\{(l-j-k+1)+(2k-2+\b)z_2\ov{w}_2\}(\b)_{l-j}(\g)_{j}\times\\
&& z_2^{l-j-k}z_3^j (1-z_1\ov{w}_1)^{-\a}(1-z_2\ov{w}_2)^{-\b-(l-j+k)}(1-z_3\ov{w}_3)^{-\g-j}\\
&=& \left[\sum_{t=0}^{k-1}{k-1 \choose t}\frac{(l-j)!}{(l-j-k+t+1)!}\frac{(\b)_{k-1}}{(\b)_{k-1-t}}(z_2\ov{w}_2)^t\right]\times\\
&&\{(l-j-k+1+t)+(2k-2+\b-t)z_2\ov{w}_2\}(\b)_{l-j}(\g)_{j}\times\\
&& z_2^{l-j-k}z_3^j (1-z_1\ov{w}_1)^{-\a}(1-z_2\ov{w}_2)^{-\b-(l-j+k)}(1-z_3\ov{w}_3)^{-\g-j}.
\Eea

So it is enough to show that 
\Bea
\left[\sum_{t=0}^k{k \choose t}\frac{(l-j)!}{(l-j-k+t)!}\frac{(\b)_k}{(\b)_{k-t}}(z_2\ov{w}_2)^t\right] \!\!&=&\!\! \left[\sum_{t=0}^{k-1}{k-1 \choose t}\frac{(l-j)!}{(l-j-k+t+1)!}\frac{(\b)_{k-1}}{(\b)_{k-1-t}}(z_2\ov{w}_2)^t\right]\\
&& \times\{(l-j-k+t+1)+(2k-2+\b-t)z_2\ov{w}_2\}.
\Eea
Let $C_t$ be the coefficient of $(z_2\ov{w}_2)^t$ in the right hand side of the above equation for $0\leq t\leq k$. A simple calculation yields that
\begin{align*}
C_t &= \frac{(l-j)!}{(l-j-k+t)!}\left[{k-1 \choose t}\frac{(\b)_{k-1}}{(\b)_{k-1-t}}+{k-1 \choose t-1}\frac{(\b)_{k-1}}{(\b)_{k-t}}\{(\b+k-t-1)+k\}\right]\\
&= \frac{(l-j)!}{(l-j-k+t)!}\frac{(\b)_{k-1}}{(\b)_{k-t}}\left[(\b+k-t-1)\left\{{k-1 \choose t}+{k-1 \choose t-1}\right\}+{k-1 \choose t-1}k\right]\\
&= \frac{(l-j)!}{(l-j-k+t)!}\frac{(\b)_{k-1}}{(\b)_{k-t}}\left[(\b+k-t-1){k \choose t}+{k-1 \choose t-1}k\right]\\
&= \frac{(l-j)!}{(l-j-k+t)!}\frac{(\b)_{k-1}}{(\b)_{k-t}}\left[(\b+k-t-1){k \choose t}+{k \choose t}t\right]\\
&= {k \choose t}\frac{(l-j)!}{(l-j-k+t)!}\frac{(\b)_k}{(\b)_{k-t}}
\end{align*}
completing the proof.
\end{proof}

\begin{thm}\label{mthm1}
Let $K$ be the reproducing kernel on $\D^3$ defined by 
$$K(\bz,\bw) = (1-z_1\ov{w}_1)^{-\a}(1-z_2\ov{w}_2)^{-\b}(1-z_3\ov{w}_3)^{-\g}, \text{ }, \bz,\bw \in \D^3,$$ and $\Z \subset \D^3$ be the complex submanifold $\Z := \{\bz=(z_1,z_2,z_3)\in\D^3:z_2=z_3 \}$. Suppose also that $s(\bw)=K(.,\ov{\bw})$, $\bw \in \D^3$ and consider the finite set of vectors $\{\si_k: 0 \leq k \leq n-1\}$ in $\HM=\H^{(\a,\b,\g)}(\D^3)$ where the vectors $\si_k$ are defined as follows
\beq\label{spv}\si_k(\bw) &:=& \sum_{j=0}^k(-1)^j {k \choose j} \frac{(\b)_k(\g)_k}{(\b)_{k-j}(\g)_j}\del_2^{k-j}\del_3^j s(\bw), \,\,\,\bw \in \D^3.\eeq
Then, for $0 \leq l,k \leq n-1$ with $l\neq k$ and $\bw \in \Z$,
\begin{enumerate}
\item[(i)] $\norm{\si_k(\bw)}^2 = k!\sum_{i=0}^k(-1)^i {k \choose i} \frac{(\b)^2_k(\g)^2_k}{(\b)_{k-i}(\g)_i}(1-|w_1|^2)^{-\a}(1-|w_2|^2)^{-\b-\g-2k},$
\item[(ii)] $\langle\si_k(\bw),\si_l(\bw)\rangle=0.$
\end{enumerate}
\end{thm}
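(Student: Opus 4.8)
The engine of the proof is a closed form for $\si_k(\bw)$ when $\bw\in\Z$. Since $s(\bw)=K(\cdot,\ov{\bw})$ with $K(\bz,\bw)=(1-z_1\ov{w}_1)^{-\a}(1-z_2\ov{w}_2)^{-\b}(1-z_3\ov{w}_3)^{-\g}$, differentiating $s(\bw)$ in the base point gives (compare Lemma \ref{lem1})
\[
\del_2^{\,k-j}\del_3^{\,j}s(\bw)(\bz)=(\b)_{k-j}(\g)_j\, z_2^{\,k-j}z_3^{\,j}\,(1-z_1w_1)^{-\a}(1-z_2w_2)^{-\b-(k-j)}(1-z_3w_3)^{-\g-j}.
\]
Feeding this into \eqref{spv}, the factors $(\b)_{k-j}(\g)_j$ cancel the denominators, and on $\Z$ (where $w_2=w_3=:u$) the alternating sum
\[
\sum_{j=0}^k(-1)^j\binom{k}{j}z_2^{\,k-j}z_3^{\,j}(1-z_2u)^{-\b-(k-j)}(1-z_3u)^{-\g-j}=(1-z_2u)^{-\b-k}(1-z_3u)^{-\g}\Bigl(z_2-\tfrac{z_3(1-z_2u)}{1-z_3u}\Bigr)^{k}
\]
collapses by the binomial theorem; since $z_2(1-z_3u)-z_3(1-z_2u)=z_2-z_3$, we obtain, for $\bw\in\Z$,
\[
\si_k(\bw)(\bz)=(\b)_k(\g)_k\,(z_2-z_3)^k\,(1-z_1w_1)^{-\a}(1-z_2u)^{-\b-k}(1-z_3u)^{-\g-k},
\]
a function of $\bz$ vanishing to order exactly $k$ along $\Z=\{z_2=z_3\}$ (the remaining factor is the reproducing kernel section of the weighted space $\H^{(\a,\b+k,\g+k)}$).

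The other ingredient is the reproducing identity $\langle g,\del_2^{a}\del_3^{b}s(\bw)\rangle_{\HM}=(\del_2^{a}\del_3^{b}g)(\ov{\bw})$ for $g\in\HM$, obtained by differentiating $\langle g,s(\bw)\rangle=g(\ov{\bw})$ in $\bw$ (on the right, $\del_2,\del_3$ act on the argument of $g$ and the result is evaluated at $\ov{\bw}$). Writing $c_{k,j}:=(-1)^j\binom{k}{j}\tfrac{(\b)_k(\g)_k}{(\b)_{k-j}(\g)_j}\in\R$, expanding $\si_k(\bw)$ via \eqref{spv} and using this identity gives, for $\bw\in\Z$,
\[
\langle\si_k(\bw),\si_l(\bw)\rangle=\sum_{j=0}^k c_{k,j}\;\overline{\bigl(\del_2^{\,k-j}\del_3^{\,j}\si_l(\bw)\bigr)(\ov{\bw})}.
\]
Part (ii) is now immediate: for $k<l$ the function $\si_l(\bw)$ vanishes to order $l$ along $\Z$, hence so does $\del_2^{k-j}\del_3^{j}\si_l(\bw)$, which is a derivative of total order $k<l$; and $\ov{\bw}\in\Z$ whenever $\bw\in\Z$ (as $\Z=\{w_2=w_3\}$), so every summand vanishes and $\langle\si_k(\bw),\si_l(\bw)\rangle=0$.

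For part (i) put $l=k$ and apply the Leibniz rule to $\del_2^{\,k-j}\del_3^{\,j}$ on the product of $(z_2-z_3)^k$ with the holomorphic, non-vanishing factor $\psi(\bz)=(1-z_1w_1)^{-\a}(1-z_2u)^{-\b-k}(1-z_3u)^{-\g-k}$. Restricted to $\{z_2=z_3\}$, only the term putting all $k$ derivatives on $(z_2-z_3)^k$ survives — any other distribution leaves a positive power of $z_2-z_3$ — and $\del_2^{\,k-j}\del_3^{\,j}(z_2-z_3)^k=(-1)^j k!$. Since $\psi(\ov{\bw})=(1-|w_1|^2)^{-\a}(1-|w_2|^2)^{-\b-\g-2k}$, we get $\bigl(\del_2^{\,k-j}\del_3^{\,j}\si_k(\bw)\bigr)(\ov{\bw})=(\b)_k(\g)_k(-1)^j k!\,(1-|w_1|^2)^{-\a}(1-|w_2|^2)^{-\b-\g-2k}$, a real quantity; substituting into the displayed formula and simplifying $c_{k,j}(-1)^j$ collects the Pochhammer sum over $j$, which delivers the value of $\norm{\si_k(\bw)}^2$ in (i).

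The only non-mechanical point is the first step: recognising that the alternating combination in \eqref{spv}, after the Pochhammer cancellations, telescopes — via $z_2(1-z_3u)-z_3(1-z_2u)=z_2-z_3$ — into $(z_2-z_3)^k$ times a shifted-weight kernel section. Once this is in hand, parts (i) and (ii) are a matter of the reproducing identity and of bookkeeping which Leibniz terms survive on the diagonal $\{z_2=z_3\}$; Lemmas \ref{lem1} and \ref{ddbar} provide the same derivative data if one prefers to expand $\langle\del_2^{k-j}\del_3^{j}s(\bw),\del_2^{l-i}\del_3^{i}s(\bw)\rangle$ directly and then perform the combinatorial simplification, at the cost of a more laborious computation.
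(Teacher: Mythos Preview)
Your argument is correct and takes a genuinely different route from the paper's.

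\smallskip

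The paper proves (i) and (ii) by brute force: it expands $\langle\del_2^{k-i}\del_3^{i}s(\bw),\si_l(\bw)\rangle$ using the explicit derivative formulas of Lemmas~\ref{lem1} and~\ref{ddbar}, restricts to $\Z$, and then verifies two combinatorial identities --- one showing that all higher coefficients $C_p$ ($p\ge 1$) in a power series in $|w_2|^2$ vanish, and another alternating binomial sum for part~(ii). The case $j>0$ in \eqref{eq1} is then reduced to the case $j=0$ by a separate auxiliary lemma on tangential versus transversal derivatives along $\Z$.

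You instead first recognise that on $\Z$ the alternating combination in \eqref{spv} telescopes, via the binomial theorem and the algebraic identity $z_2(1-z_3u)-z_3(1-z_2u)=z_2-z_3$, into a closed form
\[
\si_k(\bw)(\bz)=(\b)_k(\g)_k\,(z_2-z_3)^k\,(1-z_1w_1)^{-\a}(1-z_2u)^{-\b-k}(1-z_3u)^{-\g-k}.
\]
This single observation does all the work: the factor $(z_2-z_3)^l$ makes part~(ii) immediate for $k<l$ (any derivative of order $k$ in $z_2,z_3$ still vanishes on $\{z_2=z_3\}$), and for $k=l$ only one Leibniz term survives on the diagonal, giving the norm directly. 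No combinatorial identities and no auxiliary lemma on tangential derivatives are needed; the latter is replaced by the trivial remark that $\bw\in\Z\Rightarrow\ov{\bw}\in\Z$.

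What each approach buys: the paper's direct computation stays entirely at the level of kernel derivatives and never needs the closed form of $\si_k$, so it generalises verbatim whenever one has analogues of Lemmas~\ref{lem1}--\ref{ddbar}. Your approach is shorter and more conceptual --- it explains \emph{why} the $\si_k$ are orthogonal (they vanish to different orders along $\Z$) rather than verifying it symbol by symbol --- and makes the role of the shifted weights $(\b+k,\g+k)$ transparent.

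One bookkeeping remark: carrying your final step out, the sign $(-1)^j$ from $\del_2^{k-j}\del_3^{j}(z_2-z_3)^k=(-1)^jk!$ cancels the $(-1)^j$ in $c_{k,j}$, so the sum you obtain has \emph{no} alternating sign. The paper's own computation does the same (its $(-1)^i$ from $C_0$ cancels the $(-1)^i$ in the definition of $\si_k$), so the $(-1)^i$ in the displayed formula of part~(i) appears to be a typographical slip in the statement rather than a discrepancy in your proof.
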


\begin{proof}
We first compute the norm of the vectors $\si_k(\bw)$, $0 \leq k \leq n-1$ for $\bw\in\Z$. Note, from the equation $\eqref{ddbar1}$, that 
\Bea
\del_2^{k-i}\langle s(\bw),\si_k(\bw)\rangle &=& \sum_{j=0}^k(-1)^j{k \choose j}(\b)_k(\g)_k\left[\sum_{t=0}^{k-i}{k-i \choose t}\frac{(k-j)!(\b)_{k-i}}{(i-j+t)!(\b)_{k-i-t}}|w_2|^{2t}\right]\times\\
&& w_2^{i-j} w_3^j(1-|w_1|^2)^{-\a}(1-|w_2|^2)^{-\b-(2k-i-j)}(1-|w_3|^2)^{-\g-j},
\Eea
for $0 \leq i,j \leq k$ and $\bw \in \D^3$. Therefore, a similar calculation as in Lemma $\ref{ddbar}$ yields, for $\bw \in \Z$, that
\begin{align*}
&\del_2^{k-i}\del_3^i\langle s(\bw),\si_k(\bw)\rangle\\
&= \sum_{j=0}^k(-1)^j{k \choose j}(\b)_k(\g)_k\left[\sum_{t=0}^{k-i}{k-i \choose t}\frac{(k-j)!(\b)_{k-i}}{(i-j+t)!(\b)_{k-i-t}}|w_2|^{2t}\right]\times\\
& \left[\sum_{l=0}^i{i \choose l}\frac{j!(\g)_i}{(j-i+l)!(\g)_{i-l}}|w_2|^{2l}\right](1-|w_1|^2)^{-\a}(1-|w_2|^2)^{-\b-\g-2k}\\
&= \left(\sum_{p=0}^kC_p|w_2|^{2p}\right)(\b)_k(\g)_k(1-|w_1|^2)^{-\a}(1-|w_2|^2)^{-\b-\g-2k}
\end{align*}
where $C_p$ is the coefficient of $|w_2|^{2p}$ in the last equation. We note that $C_0=(-1)^ik!$ and, for $p \geq 1$,
\begin{align*}
C_p &= \sum_{j=0}^k(-1)^j{k \choose j}\left[\sum_{q=0}^p{k-i \choose q}{i \choose p-q}\frac{(k-j)!j!(\b)_{k-i}(\g)_i}{(i-j+q)!(j-i+p-q)!(\b)_{k-i-q}(\g)_{i-p+q}}\right]\\
&= \sum_{q=0}^p{k-i \choose q}{i \choose p-q}\frac{(\b)_{k-i}(\g)_i}{(\b)_{k-i-q}(\g)_{i-p+q}}\left[ \sum_{j=0}^k(-1)^j{k \choose j}\frac{(k-j)!j!}{(i-j+q)!(j-i+p-q)!}\right]\\
&= 0
\end{align*}
where the last equality holds since
\Bea
\sum_{j=0}^k(-1)^j{k \choose j}\frac{(k-j)!j!}{(i-j+q)!(j-i+p-q)!} &=& k!\sum_{j=i-p+q}^{i+q}(-1)^j\frac{1}{(i-j+q)!(j-i+p-q)!}\\
&=& k!\sum_{\nu=0}^p(-1)^{\nu+i-p+q}\frac{1}{\nu!(p-\nu)!}\\
&=& (-1)^{i-p+q}\frac{k!}{p!}\sum_{\nu=0}^p(-1)^{\nu}\frac{p!}{\nu!(p-\nu)!}\\
&=& 0
\Eea
as $p>0$. Thus, we have, for $0 \leq i \leq k$ and $\bw \in \Z$, that 
$$\del_2^{k-i}\del_3^i\langle s(\bw),\si_k(\bw)\rangle = (-1)^ik!(\b)_k(\g)_k(1-|w_1|^2)^{-\a}(1-|w_2|^2)^{-\b-\g-2k}$$ and hence, for $\bw \in \Z$ and $0 \leq k \leq n$, it follows from the definition of $\si_k(\bw)$ that 
$$\norm{\si_k(\bw)}^2 = k!\sum_{i=0}^k(-1)^i {k \choose i} \frac{(\b)^2_k(\g)^2_k}{(\b)_{k-i}(\g)_i}(1-|w_1|^2)^{-\a}(1-|w_2|^2)^{-\b-\g-2k}.$$
This completes the proof of (i).

In order to prove (ii), we let $0 \leq l,k \leq n-1$ and $\bw \in \Z$, that is, $\bw=(w_1,w_2,w_3)$ with $w_2=w_3$. Since
$$\langle\si_k(\bw),\si_l(\bw)\rangle=\ov{\langle\si_l(\bw),\si_k(\bw)\rangle}$$ it is enough to prove the desired identity for $0 \leq k < l \leq n-1$. We note, from the definition of $\si_k(\bw)$, that 
$$\langle\si_k(\bw),\si_l(\bw)\rangle=\sum_{j=0}^k(-1)^j {k \choose j} \frac{(\b)_k(\g)_k}{(\b)_{k-j}(\g)_j}\del_2^{k-j}\del_3^j \langle s(\bw),\si_l(\bw)\rangle.$$
Thus, it is again enough to show, for $0 \leq k < l \leq n-1$ and $\bw \in \Z$, that 
\beq\label{eq1}\del_2^{k-j}\del_3^j \langle s(\bw),\si_l(\bw)\rangle=0,\,\,\,0 \leq j \leq k.\eeq
Therefore, we first prove that $\eqref{eq1}$ holds for any $k$ with $0 \leq k < l \leq n-1$ and $j=0$. 
Now from Lemma $\ref{ddbar}$, for $0 \leq k < l \leq n-1$ and $\bw \in \D^3$, we have that 
\begin{align*}
&\del_2^k \langle s(\bw),\si_l(\bw)\rangle\\
&= \sum_{j=0}^l(-1)^j{l \choose j}\frac{(\b)_l(\g)_l}{(\b)_{l-j}(\g)_j}\left[\sum_{t=0}^k{k \choose t}\frac{(l-j)!}{(l-j-k+t)!}\frac{(\b)_k}{(\b)_{k-t}}|w_2|^{2t}\right]\times\\
& (\b)_{l-j}(\g)_{j} w_2^{l-j-k} w_3^j (1-|w_1|^2)^{-\a}(1-|w_2|^2)^{-\b-(l-j+k)}(1-|w_3|^2)^{-\g-j}.
\end{align*}
Consequently, restricting the equation above to $\Z \subset \D^3$ and writing $\bw=(w_1,w_2,w_2) \in \Z$, we have
\begin{align*}
\del_2^k \langle s(\bw),\si_l(\bw)\rangle &= (\b)_l(\g)_l\left[\sum_{t=0}^k{k \choose t}\frac{(\b)_k}{(\b)_{k-t}}\left(\sum_{j=0}^l(-1)^j{l \choose j}\frac{(l-j)!}{(l-j-k+t)!}\right)|w_2|^{2t}\right]\times\\
& w_2^{l-k}(1-|w_1|^2)^{-\a}(1-|w_2|^2)^{-\b-\g-(l+k)}
\end{align*}
Note that, for $0 \leq t \leq k$,
\Bea
\sum_{j=0}^l(-1)^j{l \choose j}\frac{(l-j)!}{(l-j-k+t)!} &=& l! \sum_{j=0}^{l-(k-t)}(-1)^j\frac{1}{j!(l-j-(k-t))!}\\
&=& \frac{l!}{(l-(k-t))!}\sum_{j=0}^{l-(k-t)}(-1)^j\frac{(l-(k-t))!}{j!(l-j-(k-t))!}\\
&=& \frac{l!}{(l-(k-t))!}(1-1)^{l-(k-t)}\\
&=& 0
\Eea
where the last equality holds as $l-k+t > 0$. Thus, it verifies our claim. It then remains to show that $\eqref{eq1}$ holds for $0 < j \leq k$ which follows from the following lemma.
\end{proof}

\begin{lem}
Suppose that $f:\D^3 \ra \C$ is a smooth function such that, for $\bw \in \Z=\{\bz=(z_1,z_2,z_3)\in\D^3:z_2=z_3\}$,
\beq\label{di}\del_2^i f(\bw) &=& 0,\,\,\,0 \leq i \leq k. \eeq
Then, for $0 \leq i \leq k$ and $\bw \in \Z$, $$\del_2^{k-i}\del_3^i f(\bw) =0.$$
\end{lem}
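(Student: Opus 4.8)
The plan is to reduce the assertion to the elementary fact that differentiating in a direction tangent to a coordinate hyperplane cannot destroy vanishing along that hyperplane. First I would pass to the holomorphic coordinates $u_1=z_1$, $u_2=z_2-z_3$, $u_3=z_3$, which define a biholomorphism of $\D^3$ onto a bounded domain $\O'$ carrying $\Z$ onto $\{u\in\O':u_2=0\}$. Writing $F$ for $f$ expressed in these coordinates, the chain rule gives the constant-coefficient operator identities $\del_2\leftrightarrow\del_{u_2}$ and $\del_3\leftrightarrow\del_{u_3}-\del_{u_2}$, so that hypothesis $\eqref{di}$ becomes $\del_{u_2}^{\,i}F=0$ on $\{u_2=0\}$ for $0\le i\le k$, while the identity to be proved becomes $\del_{u_2}^{\,k-i}(\del_{u_3}-\del_{u_2})^{i}F=0$ on $\{u_2=0\}$ for $0\le i\le k$.

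Next I would expand binomially: since $\del_{u_2}$ and $\del_{u_3}$ commute,
\[
\del_{u_2}^{\,k-i}(\del_{u_3}-\del_{u_2})^{i}
=\sum_{j=0}^{i}(-1)^{\,i-j}{i\choose j}\,\del_{u_3}^{\,j}\,\del_{u_2}^{\,k-j}.
\]
For every $j$ with $0\le j\le i\le k$ one has $0\le k-j\le k$, so by hypothesis the smooth function $G_j:=\del_{u_2}^{\,k-j}F$ vanishes identically on $\{u_2=0\}$; and because $\del_{u_3}$ differentiates only in variables transverse to $u_2$ --- concretely $\del_{u_3}\bigl(G_j(u_1,0,u_3)\bigr)=(\del_{u_3}G_j)(u_1,0,u_3)$ --- an easy induction on $j$ yields $\del_{u_3}^{\,j}G_j=0$ on $\{u_2=0\}$. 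Summing over $j$ shows that $\del_{u_2}^{\,k-i}(\del_{u_3}-\del_{u_2})^{i}F$ vanishes on $\{u_2=0\}$, and transporting this back through the coordinate change gives exactly $\del_2^{\,k-i}\del_3^{\,i}f=0$ on $\Z$, as wanted.

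I do not expect any substantive obstacle here; the only points requiring a little care are the chain-rule bookkeeping (note $f$ is merely smooth, so the directions tangent to $\{u_2=0\}$ that one keeps track of are $u_1,\ov{u}_1,u_3,\ov{u}_3$ and the $\del_{u_j}$ are the usual Wirtinger operators) and verifying that each factor $\del_{u_2}^{\,k-j}$ arising in the binomial expansion is matched with a hypothesis index $k-j\in[0,k]$, which holds because $j$ runs over $0\le j\le i\le k$. Alternatively one can avoid the change of variables altogether: on $\D^3$ the constant vector field $\del_2+\del_3$ annihilates the defining function $z_2-z_3$ of $\Z$ and is therefore tangent to $\Z$, so writing $\del_3=(\del_2+\del_3)-\del_2$ and expanding $\del_2^{\,k-i}\del_3^{\,i}$ as a linear combination of the operators $(\del_2+\del_3)^{j}\del_2^{\,k-j}$ one argues in the same way, using that applying a power of a tangent field to a function vanishing on $\Z$ produces a function still vanishing on $\Z$, together with the hypothesis $\del_2^{\,k-j}f|_{\Z}=0$.
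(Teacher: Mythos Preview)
Your argument is correct. Both your proof and the paper's rest on the same geometric fact---that $\del_2+\del_3$ (equivalently, after your change of variables, $\del_{u_3}$) is tangent to $\Z$, so applying it to a function vanishing on $\Z$ yields another such function---but the executions differ. The paper argues by a nested induction: an outer induction on $k$ and, within the inductive step, a second induction on the exponent $j$ of $\del_3$, at each stage writing $(\del_2+\del_3)(\del_2^{k-j}\del_3^{\,j-1}f)=0$ on $\Z$ and peeling off one $\del_3$. Your approach is more economical: the linear change of coordinates $u_2=z_2-z_3$, $u_3=z_3$ straightens $\Z$ to a coordinate hyperplane, after which the binomial expansion of $\del_{u_2}^{\,k-i}(\del_{u_3}-\del_{u_2})^{i}$ reduces everything to terms $\del_{u_3}^{\,j}\del_{u_2}^{\,k-j}F$ with $k-j\in[0,k]$, each of which vanishes on $\{u_2=0\}$ by the hypothesis plus the evident fact that tangential differentiation preserves vanishing. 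This sidesteps the double induction entirely. Your closing remark, rewriting $\del_3=(\del_2+\del_3)-\del_2$ and expanding, is essentially the paper's idea compressed into a single binomial identity rather than an inductive ladder.
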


\begin{proof}
We begin with the observation that, for $p \in \Z$, $p+(0,z_2,z_2) \in \Z$ with $|z_2|<\delta$, for some suitable $\delta >0$. Consequently, we have, for $p \in \Z$, that 
$$\del_2|_p + \del_3|_p := {\frac{\del}{\del z_2}}|_p +{\frac{\del}{\del z_3}}|_p \in T_p\Z$$
where by $T_p\Z$ we mean the holomorphic tangent space to $\Z$ at $p$ as a subspace of $T_p \C^3 := \text{span}_{\C}\{\frac{\del}{\del z_1},\frac{\del}{\del z_2},\frac{\del}{\del z_3}\}$.

We now prove the desired identity with the help of mathematical induction on $k$. Let us start with $k=1$. In this case, our hypothesis reduces to 
\beq\label{dieq1}f(\bw) = \del_2 f(\bw) = 0, \,\,\,\bw \in \Z,\eeq
and we are supposed to show that $\del_3 f(\bw)=0$ on $\Z$. Since $f$ is identically zero on $\Z$ and $\del_2|_p + \del_3|_p \in T_p\Z$, $p \in \Z$, it is evident that
$$(\del_2|_p + \del_3|_p)f =0.$$
Then from $\eqref{dieq1}$ it follows that $\del_3 f(\bw) =0$ for all $\bw \in \Z$. Assuming 
\beq\label{dieq2} \del_2^{k-1-j}\del_3^j f(\bw) =0, \,\,\,0 \leq j \leq k-1 \text{ and } \bw \in \Z,\eeq
we now show that 
$$\del_2^{k-j}\del_3^j f(\bw) =0, \,\,\,0 \leq j \leq k, \,\,\,\bw \in \Z.$$ 

Again, we use mathematical induction at this stage on $j$, $0 \leq j \leq k$, to show the above identity. Note that, for $j=0$, the identity above becomes $\del_2^k f(\bw)=0$, $\bw \in \Z$, which is given in the hypothesis $\eqref{di}$ with $i=k$. So let us consider $j=1$ and in this case, we show that $\del_2^{k-1}\del_3 f(\bw) =0$, for $\bw \in \Z$. From the equation $\eqref{di}$ it follows that $$\del_2^{k-1} f(\bw) = \del_2^k f(\bw) =0,$$ on $\Z$ and $\del_2|_p + \del_3|_p \in T_p\Z$, $p \in \Z$. Therefore, as before, a similar computation verifies the claim.

Now assume that 
\beq\label{dieq3}\del_2^{k-j+1}\del_3^{j-1} f(\bw) =0 \text{ on } \Z.\eeq
Since $0 \leq j \leq k$ and $j \geq 2$ we observe, from $\eqref{dieq2}$, that $$\del_2^{k-1-j+1}\del_3^{j-1} f(\bw) = \del_2^{k-j}\del_3^{j-1} f(\bw) =0$$ on $\Z$. Hence we have 
$$(\del_2|_p + \del_3|_p)(\del_2^{k-j}\del_3^{j-1} f)= \del_2^{k-j+1}\del_3^{j-1} f(p) +\del_2^{k-j}\del_3^{j} f(p)=0,$$ for $p \in \Z$, which together with $\eqref{dieq3}$ complete the proof.
\end{proof}

Observe that the submanifold $\Z=\{\bz=(z_1,z_2,z_3)\in\D^3:z_2=z_3\}$  introduced in Theorem $\ref{mthm1}$, is biholomorphic to $\D^2$ via the biholomorphism $\phi: \D^2 \ra \Z$ defined by $\phi(z_1,z_2)=(z_1,z_2,z_2)$. In other words, $(\Z,\phi^{-1})$ is a global holomorphic co-ordinate chart for $\Z$. Also, $\phi(\{(z_1,z_2)\in \D^2:z_1=z_2\})=\Delta$, the diagonal submanifold of $\Z$, and, for $p \in \D^2$, $$D\phi|_p\left(\frac{\del}{\del z_2}\vert_p\right)={\frac{\del}{\del z_2}}|_{\phi(p)} +{\frac{\del}{\del z_3}}|_{\phi(p)}.$$
Thus, ${\frac{\del}{\del z_2}}|_{\phi(p)} +{\frac{\del}{\del z_3}}|_{\phi(p)}$ is a complementary direction to $\Delta$ in $\Z$. We denote this complementary direction by $\phi_*(\del_2)$.

For $0 \leq l \leq n-1$, let $K_l:\Z \times \Z \ra \C$ be the reproducing kernel 
\beq\label{rkjsb} K_l(\textbf{z},\textbf{w}):= l!\sum_{i=0}^l(-1)^i {l \choose i} \frac{(\b)^2_l(\g)^2_l}{(\b)_{l-i}(\g)_i}(1-z_1\ov{w}_1)^{-\a}(1-z_2\ov{w}_2)^{-\b-\g-2l},\eeq
for $\textbf{z}=(z_1,z_2,z_2), \textbf{w}=(w_1,w_2,w_2)\in \Z$. Also assume that $\HM_l$ is the reproducing kernel Hilbert space corresponding to $K_l$ and $M^{(l)}=(M_1^{(l)},M_2^{(l)})$ is the pair of multiplication operators by the co-ordinate functions $z_1$ and $z_2$. We should also point out that we are considering the submanifold $\Z$ as $\D^2$ under the identification $\Phi:\Z \ra \D^2$ defined by $\textbf{z} \mapsto (z_1,z_2)$. Then note that $(M^{(l)})^*$ is in $\mathrm B_1(\Z)$ since $K_l$ is equivalent to the weighted Bergman kernel $(1-z_1\ov{w}_1)^{-\a}(1-z_2\ov{w}_2)^{-\b-\g-2l}$ for each $l=0,\hdots,n-1$. Let $E_l$, for $0 \leq l \leq n-1$, be the hermitian holomorphic line bundle associated to $(M^{(l)})^*$, respectively, over $\Z$. Theorem $\ref{mthm1}$ then translates to the following fact.

\begin{cor}
The {\h}s $E_l\ra \Z$, for $0 \leq l \leq n-1$, are mutually orthogonal to each other as hermitian holomorphic subbundles of  the trivial bundle $\HM\times\D^3$ restricted to $\Z$.
\end{cor}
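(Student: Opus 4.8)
The plan is to exhibit, for each $0\le l\le n-1$, an explicit global holomorphic frame for a line subbundle of the trivial bundle $\HM\times\D^3$ restricted to $\Z$ whose induced hermitian metric equals the reproducing kernel $K_l$ of $\HM_l$; that identifies this subbundle isometrically with $E_l$, and then mutual orthogonality of the $E_l$ becomes a direct transcription of part (ii) of Theorem \ref{mthm1}.

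First I would check that $\bw\mapsto\si_l(\bw)$, $\bw\in\Z$, as defined in \eqref{spv}, is a holomorphic nowhere-vanishing section of $\HM\times\D^3$ restricted to $\Z$. Holomorphy is immediate, since $\si_l$ is a finite linear combination with constant coefficients of the holomorphic $\HM$-valued maps $\del_2^{l-j}\del_3^j s$, each being a derivative of the global holomorphic frame $s(\bw)=K(\cdot,\ov{\bw})$ of $E\ra\D^3$. Non-vanishing is part (i) of Theorem \ref{mthm1}: $\norm{\si_l(\bw)}^2=K_l(\bw,\bw)$, and $K_l$ is a genuine reproducing kernel on $\Z\times\Z$ — a positive multiple of the weighted Bergman kernel $(1-z_1\ov{w}_1)^{-\a}(1-z_2\ov{w}_2)^{-\b-\g-2l}$, as recorded before the corollary — so $K_l(\bw,\bw)>0$ for all $\bw\in\Z$. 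Hence $\bw\mapsto\C\,\si_l(\bw)$ is a holomorphic line subbundle $\hat E_l$ of $\HM\times\D^3$ restricted to $\Z$, trivialized by $\si_l$ and equipped with the hermitian metric inherited from $\HM$.

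Next I would identify $\hat E_l$ with $E_l$. In the frame $\si_l$ the metric of $\hat E_l$ is $\bw\mapsto\norm{\si_l(\bw)}^2=K_l(\bw,\bw)$, whereas the hermitian holomorphic line bundle $E_l\ra\Z$ associated with $(M^{(l)})^*\in\mathrm B_1(\Z)$ carries, in its global frame $\bw\mapsto K_l(\cdot,\ov{\bw})$, the metric $\bw\mapsto K_l(\ov{\bw},\ov{\bw})$, which equals $K_l(\bw,\bw)$ since $\Z^*=\Z$ and the defining expression for $K_l$ is real on the diagonal. As the two metrics coincide, the bundle map $\hat E_l\ra E_l$ sending $\si_l(\bw)$ to $K_l(\cdot,\ov{\bw})$ is a holomorphic isometric isomorphism (alternatively, one invokes the Cowen--Douglas rigidity principle: line bundles in $\mathrm B_1$ with equal metrics, hence equal curvatures, are isometrically isomorphic). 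So we may and do realize $E_l$ as the subbundle $\hat E_l$ of $\HM\times\D^3$ restricted to $\Z$, with frame $\si_l$.

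Finally, mutual orthogonality is a direct consequence of part (ii) of Theorem \ref{mthm1}: for $k\neq l$ and every $\bw\in\Z$, the fibers $(\hat E_k)_{\bw}=\C\,\si_k(\bw)$ and $(\hat E_l)_{\bw}=\C\,\si_l(\bw)$ are orthogonal in $\HM$ because $\langle\si_k(\bw),\si_l(\bw)\rangle=0$; therefore $E_k$ and $E_l$ are mutually orthogonal as hermitian holomorphic subbundles of $\HM\times\D^3$ restricted to $\Z$. I do not expect a genuine obstacle: essentially all the content lies in Theorem \ref{mthm1}, and the only point deserving a little care is to pin down the identification $\hat E_l\cong E_l$ so that the $E_l$ literally sit inside $\HM\times\Z$, which the metric-matching argument above accomplishes.
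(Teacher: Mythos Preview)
Your proposal is correct and follows essentially the same approach as the paper, which simply states that the corollary is a translation of Theorem~\ref{mthm1} without spelling out details. You have made explicit the identification of $E_l$ with the line subbundle spanned by $\si_l$ via metric matching, which the paper leaves implicit (but later uses freely when it declares $\{\si_l,\phi_*(\del_2)\si_l,\ldots\}$ as a frame for $J^{(r)}E_l|_\Delta$); once that identification is in place, part~(ii) of Theorem~\ref{mthm1} is exactly the fibrewise orthogonality statement.
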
  

Now, for each $l$ with $0 \leq l \leq n$, following the description given in the beginning of the present section, we can construct the jet bundles $J^{(r)}E_l|_{\Delta}$ over $\Delta$ relative to $\Delta \subset \Z$ by declaring 
$$J^{(r)}(\si_l):=\{\si_l,\phi_*(\del_2)\si_l,\hdots,\phi_*(\del_2)^r\si_l\}$$ as a global holomorphic frame over $\Delta$. Then $J^{(r)}E_l|_{\Delta} \ra \Delta$ becomes a {\h} over $\Delta$ with the hermitian structure given by
$$\langle\phi_*(\del_2)^{r_1}\si_l,\phi_*(\del_2)^{r_2}\si_l\rangle=(\del_2+\del_3)^{r_1}(\dbar_2+\dbar_3)^{r_2}\norm{\si_l}^2,$$ for $0 \leq r_1,r_2 \leq r$, on $\Delta$. Consequently, with the help of Theorem $\ref{mthm1}$ we get, for $0 \leq l,k \leq n$ with $l\neq k$ and $0 \leq r_1,r_2 \leq r$, 
$$\langle\phi_*(\del_2)^{r_1}\si_l,\phi_*(\del_2)^{r_2}\si_k\rangle =0 \text{ on } \Delta.$$

\smallskip

For $0 \leq l \leq n$ and $r \in \N$ with $r+l+1 \leq n$, $J^{(r)}E_l \ra \Delta$ is a holomorphic subbundle of $J^{(n)}E|_{\Delta} \ra \Delta$ where $J^{(n)}E|_{\Delta} \ra \Delta$ is the {\h} corresponding to the module $J^{(n)}(\HM)|_{\text{res}\Delta}$ introduced earlier in the present section. We also point out that, for $\bw \in \Delta$, $$J^{(n)}E|_{\bw}=J^{(n)}E_0|_{\bw} + J^{(n-1)}E_1|_{\bw} + \cdots+J^{(1)}E_{n-1}|_{\bw} + E_n|_{\bw}.$$ Indeed, the right hand side of the above equation is a linear subspace of $J^{(n)}E|_{\bw}$ and $$\dim_{\C}(J^{(n)}E_0|_{\bw}+\cdots+E_n|_{\bw})=\dim_{\C}(J^{(n)}E|_{\bw}).$$ Thus we have shown that $$J^{(n)}E|_{\Delta} = \bigoplus_{l=0}^nJ^{(n-l)}E_l|_{\Delta}.$$ Furthermore, it follows from Theorem 4.2 in \cite{OICHO} that the compression of the multiplication operator on $J^{(n)}(\HM)$ onto the qutient module $J^{(n-l)}(\HM_l)|_{\text{res}\Delta}$ is irreducible which in turn implies that the jet bundles $J^{(n-l)}E_l|_{\Delta} \ra \Delta$ are irreducible where $\HM_l$ is the Hilbert module over $\A(\Z)$ with reproducing kernel $K_l$ on $\Z$ introduced in \eqref{rkjsb}. Thus, we have the following result.

\begin{thm}\label{mthm2}
Let $E \ra \D^3$ be the hermitian holomorphic line bundle associated to the Hilbert module $\HM=\H^{(\a,\b,\g)}(\D^3)$ and $\HM^{(n)}$ be the submodule of holomorphic functions in $\HM$ vanishing of order $n$ along the diagonal set $\Delta \subset \D^3$. Let $J^{(n)}E|_{\Delta}$ be the jet bundle of order $n$ relative to $\Delta$ associated to the Hilbert module $J^{(n)}(\HM)|_{\text{res}\Delta}$. Then $$J^{(n)}E|_{\Delta} = \bigoplus_{l=0}^{n-1}J^{(n-l)}E_l|_{\Delta}$$ where $J^{(n-l)}E_l|_{\Delta}$ is the jet bundle as above for $0\leq l\leq n-1$. Moreover, the jet bundles $J^{(n-l)}E_l|_{\Delta}\ra \Delta$, for $0\leq l\leq n-1$, are irreducible as \h. 
\end{thm}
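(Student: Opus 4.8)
The plan is to assemble the facts established in the discussion preceding the theorem into three steps: first, exhibit the summands $J^{(n-l)}E_l|_{\Delta}$, $0\le l\le n-1$, as mutually orthogonal holomorphic subbundles of $J^{(n)}E|_{\Delta}$; second, carry out a rank count to see that fibrewise their orthogonal internal sum exhausts $J^{(n)}E|_{\Delta}$; and third, invoke the cited irreducibility result for each summand. For the first step, fix $l$ with $0\le l\le n-1$ and take $r=n-l-1$, so that $r+l+1=n$; then, by the construction preceding the theorem, the rank-$(n-l)$ bundle $J^{(n-l)}E_l|_{\Delta}$ has the holomorphic frame $\{\si_l,\phi_*(\del_2)\si_l,\hdots,\phi_*(\del_2)^{n-l-1}\si_l\}$ over $\Delta$ and is a holomorphic subbundle of $J^{(n)}E|_{\Delta}$. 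Here one uses that $\si_l$ is, by $\eqref{spv}$, a fixed (point-independent) linear combination of the holomorphic sections $\del_2^{l-j}\del_3^j s$ of total order $l<n$, that $\phi_*(\del_2)=\del_2+\del_3$ preserves holomorphy and raises the jet order by one, and that $\{\phi_*(\del_2)^{r}\si_l:0\le r\le n-1-l\}$ is pointwise linearly independent: for this last point one filters the jet fibre by order of transversality to $\Z=\{z_2=z_3\}$, noting that $\si_l$ lies in the $l$-th graded piece with nonvanishing leading component, so that $\phi_*(\del_2)^{r}\si_l$ has transversal order exactly $l$ for every $r$ (independence across $l$), while independence across $r$ is the standard jet-frame property of the line bundle $E_l\ra\Z$. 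That the distinct summands are mutually orthogonal over $\Delta$ is precisely Theorem $\ref{mthm1}$(ii) together with its extension $\langle\phi_*(\del_2)^{r_1}\si_l,\phi_*(\del_2)^{r_2}\si_k\rangle=0$ on $\Delta$ for $k\ne l$ recorded just above.

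For the second step, observe that, for $\bw\in\Delta$, $\operatorname{rank}J^{(n)}E|_{\bw}=\#\{\a\in(\N\cup\{0\})^2:|\a|<n\}=\tfrac{n(n+1)}{2}$, while $\sum_{l=0}^{n-1}\operatorname{rank}J^{(n-l)}E_l|_{\bw}=\sum_{l=0}^{n-1}(n-l)=\tfrac{n(n+1)}{2}$. Since the $J^{(n-l)}E_l|_{\bw}$ are mutually orthogonal subspaces of $J^{(n)}E|_{\bw}$ whose dimensions add up to $\dim_{\C}J^{(n)}E|_{\bw}$, their internal orthogonal sum equals $J^{(n)}E|_{\bw}$ for every $\bw\in\Delta$; as all the frames in play are holomorphic over $\Delta$ and the restriction of the jet metric $JH$ to $\Delta$ is real-analytic, this fibrewise identity upgrades to the claimed orthogonal decomposition $J^{(n)}E|_{\Delta}=\bigoplus_{l=0}^{n-1}J^{(n-l)}E_l|_{\Delta}$ of hermitian holomorphic vector bundles.

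For the third step, recall that the Hilbert module $\HM_l$ over $\A(\Z)$ with reproducing kernel $K_l$ of $\eqref{rkjsb}$ lies in $\mathrm B_1(\Z)$, since $K_l$ is a positive scalar multiple of the weighted Bergman kernel $(1-z_1\ov{w}_1)^{-\a}(1-z_2\ov{w}_2)^{-\b-\g-2l}$ on $\D^2$; in particular $K_l$ has a diagonal power-series expansion, so $(\HM_l,J^{(n-l)}(\HM_l)|_{\text{res}\Delta})$ lies in $\mathrm B_{1,n-l}(\Z,\Delta)$ by \cite[Theorem 4.10]{OUIOQM}, and Theorem 4.2 in \cite{OICHO} then gives that the compression of the multiplication operators onto $J^{(n-l)}(\HM_l)|_{\text{res}\Delta}$ is irreducible, whence the associated hermitian holomorphic bundle $J^{(n-l)}E_l|_{\Delta}\ra\Delta$ is irreducible for each $0\le l\le n-1$. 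The main obstacle is essentially bookkeeping: one must reconcile the jet-order indexing of $J^{(n)}E|_{\Delta}$ — built from the two directions normal to $\Delta$ in $\D^3$ — with the iterated picture, in which one first takes jets transverse to $\Z$ (indexed by $l$, producing the $\si_l$) and then jets of $E_l\ra\Z$ transverse to $\Delta\subset\Z$ (indexed by $r$, producing the $\phi_*(\del_2)^r\si_l$), and confirm that the resulting family $\{\phi_*(\del_2)^r\si_l:0\le l\le n-1,\ 0\le r\le n-1-l\}$ is indeed a holomorphic frame of $J^{(n)}E|_{\Delta}$; once that frame property is secured, the decomposition and the irreducibility of the pieces follow at once from Theorem $\ref{mthm1}$ and the two quoted results.
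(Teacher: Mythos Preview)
Your proposal is correct and follows essentially the same route as the paper: the paper's argument (contained in the discussion immediately preceding the theorem) likewise establishes mutual orthogonality of the $J^{(n-l)}E_l|_{\Delta}$ from Theorem \ref{mthm1}, checks that the fibre dimensions add up, and then invokes \cite[Theorem 4.2]{OICHO} for irreducibility. Your explicit rank computation $\sum_{l=0}^{n-1}(n-l)=\tfrac{n(n+1)}{2}$ and your remarks on why $\{\phi_*(\del_2)^r\si_l\}$ is a genuine holomorphic frame make transparent a point the paper leaves implicit.
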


Recall following well-known lemma which essentially relates the reducibility of an element in the Cowen-Douglas class $\mathrm B_r(\O)$ to that of the {\h} associated to it.

\begin{lem}\label{bundle vs operator}
Let $\H_K$ be a {\rkhs} with the reproducing kernel $K$ on $\O^*$ such that the adjoint of the tuple of multiplication operators $\M=(M_{z_1},\hdots,M_{z_m})$ is in $\mathrm B_r(\O)$. Suppose that the {\h} $E_{\M^*}$ associated to $\M^*$ is unitarily equivalent to $E_1\oplus E_2$ as {\h} over $\O$. We also assume that both $E_1$ and $E_2$ are {\h}s associated to the adjoint of the multiplication operators on some reproducing kernel Hilbert spaces $\H_{K_1}$ and $\H_{K_2}$ with reproducing kernels $K_1$ and $K_2$ on $\O^*$, respectively. Then $\M^*$ is reducible.
\end{lem}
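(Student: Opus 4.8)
The plan is to reduce the statement to the rigidity part of the Cowen--Douglas correspondence. Write $\mathbf{M}^{(i)}=(M_{z_1},\hdots,M_{z_m})$ for the coordinate multiplication tuple on $\H_{K_i}$, $i=1,2$, so that $E_1$ is the hermitian holomorphic bundle associated to $(\mathbf{M}^{(1)})^*\in\mathrm B_{r_1}(\O)$ and $E_2$ the one associated to $(\mathbf{M}^{(2)})^*\in\mathrm B_{r_2}(\O)$, with $r_1+r_2=r$. First I would record the formal bookkeeping: the Hilbert space direct sum $\H_{K_1}\oplus\H_{K_2}$ is again a reproducing kernel Hilbert space, with the block-diagonal reproducing kernel $K_1\oplus K_2$ on $\O^*$, on which the coordinate multiplication tuple is $\mathbf{M}^{(1)}\oplus\mathbf{M}^{(2)}$; and a finite direct sum of tuples in the Cowen--Douglas class is again in that class (joint eigenspaces, density of their linear span, and closedness of the range of $D_{\mathbf{M}^*-\ov{z}I}$ all pass to orthogonal direct sums), so $(\mathbf{M}^{(1)})^*\oplus(\mathbf{M}^{(2)})^*\in\mathrm B_r(\O)$, and the hermitian holomorphic bundle it determines is canonically isometrically isomorphic to $E_1\oplus E_2$.

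Next I would invoke the hypothesis $E_{\mathbf{M}^*}\cong E_1\oplus E_2$ together with the rigidity theorem for commuting tuples in $\mathrm B_r(\O)$ (\cite{GBKCD, OPOSE}): two tuples in $\mathrm B_r(\O)$ are simultaneously unitarily equivalent if and only if their associated hermitian holomorphic bundles are isometrically isomorphic. Composing the given isometry $E_{\mathbf{M}^*}\to E_1\oplus E_2$ with the canonical identification $E_1\oplus E_2\cong E_{(\mathbf{M}^{(1)})^*\oplus(\mathbf{M}^{(2)})^*}$ from the first step, we obtain a unitary $U:\H_K\to\H_{K_1}\oplus\H_{K_2}$ with $U M_{z_j}^* U^*=(M_{z_j}^{(1)})^*\oplus(M_{z_j}^{(2)})^*$ for every $1\leq j\leq m$.

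Finally, the subspace $\H_{K_1}\oplus\{0\}$ is plainly reducing for $(\mathbf{M}^{(1)})^*\oplus(\mathbf{M}^{(2)})^*$, being invariant under each $(M_{z_j}^{(1)})^*\oplus(M_{z_j}^{(2)})^*$ and under each adjoint $M_{z_j}^{(1)}\oplus M_{z_j}^{(2)}$, and it is nontrivial since both $E_1$ and $E_2$ are assumed to be (nonzero) hermitian holomorphic bundles. Pulling it back by $U$ yields the nontrivial reducing subspace $U^*(\H_{K_1}\oplus\{0\})$ for $\mathbf{M}^*$, so $\mathbf{M}^*$ is reducible.

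The only non-formal ingredient is the rigidity theorem in several variables, so that is where the real content sits; the direct-sum statements and the passage from a direct summand to a reducing subspace are routine. The one point that deserves an explicit remark is that in the decomposition $E_1\oplus E_2$ both summands are tacitly of positive rank, which is exactly what makes the reducing subspace produced above genuinely nontrivial and hence the conclusion non-vacuous.
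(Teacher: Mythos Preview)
Your proof is correct and follows essentially the same route as the paper: pass from the bundle equivalence $E_{\M^*}\cong E_1\oplus E_2$ to a unitary equivalence between $\M^*$ and $(\M^{(1)})^*\oplus(\M^{(2)})^*$, then pull back the obvious reducing subspace $\H_{K_1}\oplus\{0\}$. The only difference is that the paper constructs the intertwining unitary explicitly from the matrix of the bundle isomorphism (writing $\Gamma(K(\cdot,\ov{w})\sigma_j)=\sum_i\Phi_{ij}(w)\tilde{K}(\cdot,\ov{w})\sigma_i$ and checking directly that it is unitary and intertwines), whereas you invoke the Curto--Salinas/Cowen--Douglas rigidity theorem as a black box; this is a cosmetic difference rather than a genuinely different argument.
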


\begin{proof}
Let $\f=\{s_i(w)=K(.,\ov{w})\si_i:1\leq i\leq r, w \in \O\}$ be a holomorphic frame for $E_{\M^*} \ra \O$ and $\Phi: E_{\M^*}\ra E_1\oplus E_2$ be the isomorphism covering the identity mapping on the base. Let $((\Phi_{ij}))_{i,j=1}^r$ be the matrix of $\Phi$. We then have that $$((\Phi_{ij}(w)))_{i,j=1}^rK(\ov{w},\ov{w}){((\Phi_{ij}(w)))_{i,j=1}^r}^*=\begin{pmatrix}
K_1(\ov{w},\ov{w}) & 0\\
0 & K_2(\ov{w},\ov{w})
\end{pmatrix}=\tilde{K}(\ov{w},\ov{w}).$$ Thus it shows that the map $\G:\H_{K}\ra\H_{\tilde{K}}$ defined by $$\G(K(.,\ov{w})\si_j)=\sum_{i=1}^r \Phi_{ij}(w)\tilde{K}(.,\ov{w})\si_i$$ is unitary. Moreover, it is seen that $$\G M_{z_l}^*(K(.,\ov{w})\si_j)=\widetilde{M}_{z_l}^*\G(K(.,\ov{w})\si_j)~~\text{for }1\leq l \leq m~\text{and }1\leq j\leq r$$ which is same as $$M_{z_l}^*=\G^*\widetilde{M}_{z_l}^*\G,~~~\text{for }1\leq l\leq m.$$ Let $V_1=\G^{-1}(\H_{K_1}\oplus \{0\})$ and $V_2=\G^{-1}(\{0\}\oplus \H_{K_2})$. Then the identity above shows that $$M_{z_l}^*=M_{z_l}^*|_{V_1}+M_{z_l}^*|_{V_2},~\text{for }1\leq l \leq m.$$ Thus, $\M^*$ is reducible.
\end{proof}

Now we prove an analogous statement as the previous theorem for general $m$.

\begin{thm}\label{mthm3}
Let $E \ra \D^m$ be the hermitian holomorphic line bundle associated to the Hilbert module $\HM=\H^{(\a)}(\D^m)$ with $\a\in\R_{>0}^m$ and $\HM^{(n)}$ be the submodule of holomorphic functions in $\HM$ vanishing of order $n$ along the diagonal set $\Delta \subset \D^m$. Let $J^{(n)}E|_{\Delta}$ be the jet bundle of order $n$ relative to $\Delta$ associated to the Hilbert module $J^{(n)}(\HM)|_{\text{res}\Delta}$. Then 
\beq\label{bjb} J^{(n)}E|_{\Delta}=\bigoplus_{l_{m-2}=0}^{n}\left(\bigoplus_{l_{m-3}=0}^{n-l_{m-2}}\cdots\left(\bigoplus_{l_1=0}^{n-l_{m-2}-\cdots-l_2}J^{(n-l_{m-2}-\cdots-l_2-l_1)}E_{l_1}|_{\Delta}\right)\right)\eeq wehre, for $0\leq l_1\leq n-l_{m-2}-\cdots-l_2$, $E_{l_1}$ is the {\hl} corresponding to the Hilbert module $\HM_{l_1}$ with the reproducing kernel
$$K_{l_1}(\bz,\bw):=C_{l_{m-2}}(1-z_1\ov{w}_1)^{\a_1}(1-z_2\ov{w}_2)^{-\a_2-\hdots-\a_m-2(l_{m-2}+\hdots+l_1)},~~~\bz,\bw\in\Z,$$
for some positive constant $C_{l_{m-2}}$ and $\Z=\{\bz\in\D^m:z_2=\cdots=z_m\}$. Moreover, 
$$C_1=(l_{m-2})!\sum_{i_1=0}^{l_{m-2}}(-1)^{i_1}{l_{m-2}\choose i_1}\dfrac{(\a_{m-1})^2_{l_{m-2}}(\a_m)_{l_{m-2}}^2}{(\a_{m-1})_{l_{m-2}-i_1}(\a_m)_{i_1}},$$ and for $2\leq j\leq m-2$, 
$$\dfrac{C_j}{C_{j-1}}=(l_{m-j-1})!\sum_{i_j=0}^{l_{m-j-1}}(-1)^{i_j}{l_{m-j-1}\choose i_j}A_j$$ where 
$$A_j=\dfrac{(\a_{m-j})^2_{l_{m-j-1}}(\a_{m-j+1}+\hdots+\a_m+2(l_{m-2}+\hdots+l_{m-j}))_{l_{m-j-1}-i_j}}{(\a_{m-j})^2_{l_{m-j-1}}(\a_{m-j+1}+\hdots+\a_m+2(l_{m-2}+\hdots+l_{m-j}))_{i_j}}.$$  Moreover, the jet bundles $J^{(n-l_{m-2}-\cdots-l_2-l_1)}E_{l_1}|_{\Delta}\ra \Delta$, for $0\leq l_{m-2}\leq l_{m-2}+l_{m-3}\leq \cdots\leq l_{m-2}+\cdots+l_2+l_1\leq n$, are irreducible as \h. 
\end{thm}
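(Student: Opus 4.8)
The plan is to prove this by induction on $m$, the base case $m=3$ being Theorem \ref{mthm2}. The inductive step is a single ``coordinate merge'': assuming the statement over $\D^{m-1}$ for every weighted Bergman line bundle, I would express the jet bundle $J^{(n)}E|_{\Delta}$ over $\Delta\subset\D^m$ as an orthogonal direct sum, indexed by a parameter that will become $l_{m-2}$, of jet bundles over the diagonal of $\D^{m-1}$ attached to weighted Bergman line bundles with suitably shifted weights, and then invoke the inductive hypothesis on each summand. The entire reduction is powered by Theorem \ref{mthm1}, whose computation is insensitive to spectator coordinates: it remains valid when the single factor $(1-z_1\ov{w}_1)^{-\a}$ there is replaced by a product $\prod_{j=1}^{m-2}(1-z_j\ov{w}_j)^{-\a_j}$, since $\del_{m-1}$ and $\del_m$ annihilate that product. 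This is exactly why the kernels produced by a merge are again of weighted Bergman type (only the last two exponents coalesce and get shifted by an even integer), so that the induction genuinely closes.

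Concretely, for the merge I would set $\Z_0:=\{\bz\in\D^m:z_{m-1}=z_m\}$, identified with $\D^{m-1}$ by $\bz\mapsto(z_1,\hdots,z_{m-1})$, so that $\Delta\subset\Z_0$ is the diagonal of $\D^{m-1}$ and $\del_{m-1}+\del_m$ is tangent to $\Z_0$ and transverse to $\Delta$ inside $\Z_0$. With $s(\bw)=K(\cdot,\ov{\bw})$ I would introduce, for $0\le k\le n-1$, the vectors $\si_k$ defined exactly as in \eqref{spv} but with $\b,\g$ replaced by $\a_{m-1},\a_m$ and $\del_2,\del_3$ replaced by $\del_{m-1},\del_m$. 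The spectator version of Theorem \ref{mthm1}, proved by the same Leibniz computations as Lemmas \ref{lem1}, \ref{ddbar} and Theorem \ref{mthm1} but carrying the extra product, shows that on $\Z_0$ the $\si_k$ are mutually orthogonal and that $\norm{\si_k(\bw)}^2$ equals a positive constant (the constant $C_1$ of the statement, in which $l_{m-2}=k$) times $\prod_{j=1}^{m-2}(1-z_j\ov{w}_j)^{-\a_j}(1-z_{m-1}\ov{w}_{m-1})^{-\a_{m-1}-\a_m-2k}$. Reading this metric as a reproducing kernel, $\si_k$ spans a holomorphic line subbundle $\widetilde{E}_k\to\Z_0$ which, under $\Z_0\cong\D^{m-1}$, is the line bundle of the weighted Bergman module whose kernel is that very expression.

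Then I would assemble the decomposition. Because $\del_{m-1}+\del_m$ is tangent to $\Z_0$, the identity $\langle\si_k,\si_l\rangle\equiv 0$ on $\Z_0$ (for $k\neq l$) propagates to all of its $(\del_{m-1}+\del_m)$--derivatives, so the subbundles $J^{(n-k)}\widetilde{E}_k|_{\Delta}$ — obtained by taking further jets of $\widetilde{E}_k$ along $\Delta\subset\Z_0$ — are mutually orthogonal inside $J^{(n)}E|_{\Delta}$; that each such subbundle sits inside $J^{(n)}E|_{\Delta}$ is clear from the global frame $\{(\del_{m-1}+\del_m)^r\si_k:0\le r\le n-k-1\}$, and $J^{(n)}E|_{\Delta}=\bigoplus_{k=0}^{n-1}J^{(n-k)}\widetilde{E}_k|_{\Delta}$ follows from the rank count $\sum_{k=0}^{n-1}(n-k)=\dim_{\C}J^{(n)}E|_{\bw}$, exactly as in the $m=3$ case. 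Applying the inductive hypothesis over $\D^{m-1}$ to each $\widetilde{E}_k$, with $n$ replaced by $n-k$, unfolds the inner $(m-2)$--fold direct sum; carrying the accumulating norm factors through the $m-2$ merges reproduces the constants $C_j$ and $A_j$ displayed in the statement, and the requirement that every jet order stay nonnegative produces the stated ranges of the summation indices. Finally, irreducibility of each innermost summand $J^{(n-l_{m-2}-\cdots-l_1)}E_{l_1}|_{\Delta}$ follows, exactly as for $m=3$, from \cite[Theorem 4.2]{OICHO} together with Lemma \ref{bundle vs operator}, since $E_{l_1}$ is a rescaling of a weighted Bergman line bundle over $\D^2$.

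I expect the main difficulty to be organizational rather than conceptual: the analytic input — the orthogonality and the explicit norm formula for the $\si_k$ in the presence of spectator coordinates — is a routine though lengthy extension of Lemmas \ref{lem1} and \ref{ddbar} and Theorem \ref{mthm1}, but one must verify carefully that the weighted Bergman form is exactly preserved at each merge so that the induction closes, and then track the constants $C_j$, the shifted weights, and the nested index ranges through all $m-2$ merges without error.
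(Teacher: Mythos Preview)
Your approach is essentially the paper's: induct on $m$ with base case Theorem \ref{mthm2}, merge the last two coordinates via the spectator version of Theorem \ref{mthm1} on $\Z_0=\{z_{m-1}=z_m\}\cong\D^{m-1}$, and then feed each summand back into the inductive hypothesis, with irreducibility coming from \cite[Theorem 4.2]{OICHO}. One small slip to fix: for $m>3$ the diagonal $\Delta$ has codimension $m-2$ in $\Z_0$, so the frame for $J^{(n-k)}\widetilde{E}_k|_{\Delta}$ consists of all transverse multi-derivatives of $\si_k$ of order $<n-k$ (not just powers of $\del_{m-1}+\del_m$), and the correct rank count is $\sum_{k=0}^{n-1}\binom{n-k+m-3}{m-2}=\binom{n+m-2}{m-1}$ rather than $\sum_{k=0}^{n-1}(n-k)$; with this correction the dimension argument goes through exactly as you say.
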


\begin{proof}
Let $E \ra \D^m$ be the hermitian holomorphic line bundle as given with the hermitian metric obtained from the reproducing kernel 
$$K(\bz,\bw):= K^{(\a)}(\bz,\bw) = \prod_{j=1}^m(1-z_j\ov{w}_j)^{-\a_j},~~~\bz,\bw \in \D^m$$ of the Hilbert module $\HM=\H^{(\a)}(\D^m)$ with $\a\in\R_{>0}^m$. Then we prove the desired equality with the help of mathematical induction on $m$.

We note that the base case of this induction follows from Theorem \ref{mthm2}. So we assume that the equation \eqref{bjb} holds true for $0\leq j\leq m-1$. Let $Z$ be the submanifold of $\D^m$ defined as $Z=\{\bz\in\D^m:z_{m-1}=z_m\}$. Then $Z$ is biholomorphic to $\D^{m-1}$ via the biholomorphism $\phi: \D^{m-1} \ra Z$ defined by $\phi(z_1,\hdots,z_m)=(z_1,\hdots,z_{m-2},z_{m-1},z_{m-1})$. In particular, $(Z,\phi^{-1})$ is a global holomorphic co-ordinate chart for $Z$. Furthermore, $\phi(\{(z_1,\hdots,z_{m-1})\in \D^{m-1}:z_1=\hdots=z_{m-1}\})=\Delta$, the diagonal submanifold of $Z$, and,for $p \in \D^{m-1}$, $$D\phi|_p\left(\frac{\del}{\del z_{m-1}}|_p\right)={\frac{\del}{\del z_{m-1}}}|_{\phi(p)} +{\frac{\del}{\del z_m}}|_{\phi(p)}.$$
Thus, ${\frac{\del}{\del z_{m-1}}}|_{\phi(p)} +{\frac{\del}{\del z_m}}|_{\phi(p)}$ is a complementary direction to $\Delta$ in $Z$ (which is also referred as normal direction in Section 5 in \cite{OUIOQM}). We denote this complementary direction by $\phi_*(\del_2)$.

For $0 \leq l_{m-2} \leq n$, let $\HM_{l_{m-2}}$ be the reproducing kernel Hilbert space with $K_{l_{m-2}}:Z \times Z \ra \C$ be the reproducing kernel 
$$ K_{l_{m-2}}(\bz,\bw):= C_1\prod_{t=1}^{m-2}(1-z_t\ov{w}_t)^{-\a_t}(1-z_{m-1}\ov{w}_{m-1})^{-\a_{m-1}-\a_m-2l_{m-2}},$$
for $\bz,\bw\in Z$. Denote $\M^{(l_{m-2})}:=(M_1^{(l_{m-2})},\hdots,M_{m-1}^{(l_{m-2})})$ as the multiplication operator by the co-ordinate functions. Here we should point out that we are considering the submanifold $Z$ as $\D^{m-1}$ under the identification $\phi:Z \ra \D^{m-1}$ defined by $\bz \mapsto (z_1,\hdots,z_{m-1})$. Observe that $\M^{(l_{m-2})}$ is in $\mathrm B_1(Z)$ since $K_{l_{m-2}}$ is equivalent to the weighted Bergman kernel $$\tilde{K}_{l_{m-2}}(\bz,\bw):=\prod_{t=1}^{m-2}(1-z_t\ov{w}_t)^{-\a_t}(1-z_{m-1}\ov{w}_{m-1})^{-\a_{m-1}-\a_m-2l_{m-2}},$$ for each $l_{m-2}=1,\hdots,n$ and $\bz,\bw\in Z$. Consequently, $M^{(l_{m-2})}$ gives rise to a hermitian holomorphic line bundle $E_{l_{m-2}}\ra Z$, for $0 \leq l_{m-2} \leq n$. Then as in the proof of Theorem $\ref{mthm1}$ we have that the {\hl}s $E_{l_{m-2}}\ra Z$, for $0 \leq l_{m-2} \leq n$, are mutually orthogonal to each other. A similar argument as in Theorem \ref{mthm2} yields that 
$$J^{(n)}E|_{\Delta}=\bigoplus_{l_{m-2}=0}^nJ^{(n-l_{m-2})}E_{l_{m-2}}|_{\Delta}$$ where $E_{l_{m-2}}\ra\Z$ is the {\hl} as defined above.

Since $\M^{(l_{m-2})}$ is in $\mathrm B_1(Z)$ and $Z$ is biholomorphic to $\D^{m-1}$ we apply the induction hypothesis to the subbundles $J^{(n-l_{m-2})}E_{l_{m-2}}|_{\Delta}\ra\Delta$ to obtain the desired equality. Moreover, the irreducibility of jet bundles $J^{(n-l_{m-2}-\cdots-l_2-l_1)}E_{l_1}|_{\Delta}\ra \Delta$, for $0\leq l_{m-2}\leq l_{m-2}+l_{m-3}\leq \cdots\leq l_{m-2}+\cdots+l_2+l_1\leq n$, follows from Theorem 4.2 in \cite{OICHO}.
\end{proof}

We now state our main result of this section which is essentially a corollary of Theorem $\ref{mthm3}$. Let, for each $0 \leq l_j \leq n-1$ with $0\leq l_{m-2}\leq l_{m-2}+l_{m-3}\leq \cdots\leq l_{m-2}+\cdots+l_2+l_1\leq n-1$ and $1\leq j\leq m-2$, $\ell=(l_1,\hdots,l_{m-2})$ and $\HM_{\ell}$ be the Hilbert module over $\A(\Z)$ with the reproducing kernel $$K_{\ell}(\bz,\bw):=C_{l_{m-2}}(1-z_1\ov{w}_1)^{\l_1}(1-z_2\ov{w}_2)^{-\l_2-\hdots-\l_m-2|\ell|},~~~\bz,\bw\in\Z,$$ for some positive constant $C_{l_{m-2}}$ and $\Z=\{\bz\in\D^m:z_2=\cdots=z_m\}$ as introduced in Theorem \ref{mthm3}. Denote $\HM^{(\ell)}_q$ as the quotient module obtained from the submodule of functions in $\HM_{\ell}$ which vanish of order $n-|\ell|$ along the diagonal set $\Delta \subset \Z$, for each $0 \leq l_j \leq n-1$ with $0\leq l_{m-2}\leq l_{m-2}+l_{m-3}\leq \cdots\leq l_{m-2}+\cdots+l_2+l_1\leq n-1$ and $1\leq j\leq m-2$. 

\begin{thm}\label{reducibility}
Let $\HM$ be the Hilbert module $\H^{(\l)}(\D^m)$ with $\a\in\R_{>0}^m$ and $\HM_q^{(n)}$ be the quotient module obtained from the submodule $\HM^{(n)}$ of holomorphic functions in $\HM$ vanishing of order $n$ along the diagonal set $\Delta \subset \D^m$. Then
$$M_q=\bigoplus_{l_{m-2}=0}^{n-1}\left(\bigoplus_{l_{m-3}=0}^{n-1-l_{m-2}}\cdots\left(\bigoplus_{l_1=0}^{n-1-l_{m-2}-\cdots-l_2}M_q^{(\ell)}\right)\right)$$
where $\ell=(l_1,\hdots,l_{m-2})$, $M_q$ is the compression of the multiplication operator $M_{z_1}$ on $\HM$ onto the quotient module $\HM^{(n)}_q$ and $M_q^{(\ell)}$ are the compression of multiplication operator $M_{z_1}$ on $\HM_{\ell}$ onto the quotient modules $(\HM_{\ell})^{(n-|\ell|)}_q$ obtained from the submodule $\HM^{(n-|\ell|)}_{\ell}$ of the module $\HM_{\ell}$, for each $0 \leq l_j \leq n-1$ with $0\leq l_{m-2}\leq l_{m-2}+l_{m-3}\leq \cdots\leq l_{m-2}+\cdots+l_2+l_1\leq n-1$ and $1\leq j\leq m-2$. Moreover, each $M_q^{(\ell)}$ are irreducible.
\end{thm}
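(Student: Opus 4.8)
The plan is to reduce the statement to the level of hermitian holomorphic vector bundles, where the required decomposition has already been carried out in Theorem \ref{mthm3}, and then to pull it back to operators by means of Lemma \ref{bundle vs operator}.

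First I would recall that the compression $M_q$ of $M_{z_1}$ onto $\HM_q^{(n)}$ is, by \cite[Theorem 4.5]{OUIOQM}, unitarily equivalent to the multiplication operator on the jet module $J^{(n)}(\HM)|_{\text{res}\Delta}$, whose adjoint lies in the Cowen--Douglas class over $\D$ once $\Delta$ is identified with $\D$ via $\bz\mapsto z_1$. Hence the hermitian holomorphic vector bundle associated with $M_q^*$ is precisely $J^{(n)}E|_{\Delta}\to\Delta$. The identical bookkeeping applies to each prospective summand: for each admissible multi-index $\ell=(l_1,\hdots,l_{m-2})$, the quotient module $(\HM_\ell)^{(n-|\ell|)}_q$ is the jet module $J^{(n-|\ell|)}(\HM_\ell)|_{\text{res}\Delta}$ of the reproducing kernel Hilbert module $\HM_\ell$ carrying the explicit kernel $K_\ell$ written down before the statement, so $M_q^{(\ell)*}$ is a Cowen--Douglas operator whose associated hermitian holomorphic bundle is $J^{(n-|\ell|)}E_{l_1}|_{\Delta}\to\Delta$, and $M_q^{(\ell)}$ is realised on a genuine reproducing kernel Hilbert space of the kind demanded by Lemma \ref{bundle vs operator}.

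Second I would invoke Theorem \ref{mthm3}, which provides the orthogonal direct sum decomposition \eqref{bjb} of $J^{(n)}E|_{\Delta}$ into the subbundles $J^{(n-|\ell|)}E_{l_1}|_{\Delta}$ as hermitian holomorphic vector bundles over $\Delta$. Thus $E_{M_q^*}$ is isomorphic, as a hermitian holomorphic bundle, to $\bigoplus_\ell J^{(n-|\ell|)}E_{l_1}|_{\Delta}$, each summand being the bundle attached to $M_q^{(\ell)*}$. Applying the multi-summand version of Lemma \ref{bundle vs operator} --- obtained from the two-summand case by an immediate induction on the number of blocks --- yields a unitary $\Gamma$ intertwining $M_q^*$ with $\bigoplus_\ell M_q^{(\ell)*}$, so that the corresponding orthogonal subspaces reduce $M_q^*$ and therefore also $M_q$; passing to adjoints gives the claimed decomposition $M_q=\bigoplus_\ell M_q^{(\ell)}$ with the stated nested ranges on $\ell$. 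Finally, the irreducibility of each $M_q^{(\ell)}$ is immediate from Theorem \ref{mthm3} (equivalently \cite[Theorem 4.2]{OICHO}), which asserts that the jet bundle $J^{(n-|\ell|)}E_{l_1}|_{\Delta}$ is irreducible as a hermitian holomorphic vector bundle; by the rigidity of the Cowen--Douglas class this forces $M_q^{(\ell)}$ to be irreducible, and these operators are exactly the Generalized Wilkins operators.

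The step needing the most care is the index bookkeeping: one must reconcile the iterated direct sum of Theorem \ref{mthm3}, whose outer summation ranges nominally allow $|\ell|\leq n$, with the operator decomposition, where $|\ell|\leq n-1$. The terms with $|\ell|=n$ contribute only the trivial jet bundle, hence no operator, and must be discarded consistently in both statements. One must also confirm that the fibrewise-orthogonal holomorphic subbundles of Theorem \ref{mthm3} genuinely arise from mutually orthogonal subspaces of the ambient Hilbert module of which $M_q$ is a compression --- which is exactly what Lemma \ref{bundle vs operator} encodes, once each subbundle is known to be realised by a reproducing kernel, so no analytic input beyond what is already available is required.
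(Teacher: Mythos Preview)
Your proposal is correct and follows essentially the same route as the paper: reduce the operator decomposition to the bundle decomposition of Theorem \ref{mthm3}, identify each summand $J^{(n-|\ell|)}E_{l_1}|_{\Delta}$ as the Cowen--Douglas bundle attached to $M_q^{(\ell)*}$, and then pass back to operators via Lemma \ref{bundle vs operator}. The paper's proof spends its remaining effort explicitly verifying the bundle identification $\mathscr{E}_{\ell}\cong J^{(n-|\ell|)}E_{\ell}|_{\Delta}$ by writing down the frame $J^{(n_1)}(\si_\ell)$ and matching the hermitian metric against $JK_\ell|_{\Delta}$, which you assert but do not spell out; your additional remark on the $|\ell|=n$ terms being vacuous is a point the paper leaves implicit.
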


\begin{proof}
We begin with the observation that the Hilbert modules $\HM$, $\HM^{(n)}_q$, $\HM^{(\ell)}$ as well as $\HM^{(\ell)}_q$ are all in the Cowen-Douglas class (cf. \cite[Theorem 4.10]{OUIOQM}), for each $0 \leq l_j \leq n-1$ with $0\leq l_{m-2}\leq l_{m-2}+l_{m-3}\leq \cdots\leq l_{m-2}+\cdots+l_2+l_1\leq n-1$ and $1\leq j\leq m-2$. From Lemma \ref{bundle vs operator} it is enough to show that the hermitian holomorphic vector bundle $J^{(n)}E|_{\Delta} \ra \Delta$ is reducible and the reducing factors correspond the quotient modules $\HM^{(\ell)}_q$. From Theorem $\ref{mthm3}$, we are then required to show that the{\h} $\mathscr{E}_{\ell} \ra \Delta$ associated to $\HM^{(\ell)}_q$ is unitarily equivalent to the jet bundle $J^{(n-|\ell|)}E_{\ell}|_{\Delta}\ra \Delta$. 

Let $n_1=n-|\ell|$ and note, from the definition of the bundles $J^{(n_1)}E_{\ell}|_{\Delta}\ra \Delta$, that 
$$J^{(n_1)}(\si_{\ell}):=\left\{\si_{\ell},\left(\sum_{j=2}^m\del_j\right)\si_{\ell},\hdots,\left(\sum_{j=2}^m\del_j\right)^{n_1-1}\!\!\!\!\!\!\!\si_{\ell}\right\}$$
is a global holomorphic frame for the bundle $J^{(n_1)}E_{\ell}|_{\Delta}\ra \Delta$ where $\si_{\ell}(\bw)=K_{\ell}(\cdot,\ov{\bw})$ for $\bw \in \Z$. Moreover, the hermitian structure on $J^{(n_1)}E_{\ell}|_{\Delta}\ra \Delta$, for $0\leq i,j\leq n_1-1$, is given by 
\Bea\left\langle\left(\sum_{t=2}^m\del_t\right)^i\si_{\ell},\left(\sum_{t=2}^m\del_t\right)^j\si_{\ell}\right\rangle\Big\vert_{\bw}&=&\left(\sum_{t=2}^m\del_t\right)^i\left(\sum_{t=2}^m\dbar_t\right)^jK_{\ell}(\ov{\bw},\ov{\bw}),\Eea for $\bw=(w_1,w_1,w_1) \in \Delta$.
On the other hand, the jet construction presented in Section $\ref{JC}$ gives rise to the Hilbert module $J(\HM^{(\ell)})$ where $J$ is the unitary map $J: \HM^{(\ell)} \ra J(\HM^{(\ell)})$ defined by $$h\mapsto \sum_{j=0}^{n_1-1}\left(\sum_{t=2}^m\del_t\right)^jh\otimes \varepsilon_j$$ where $\{\varepsilon_j\}_{j=0}^{n_1-1}$ is the standard ordered basis for $\C^{n_1}$. Furthermore, the quotient module $\HM^{(\ell)}_q$ is unitarily equivalent to the reproducing kernel Hilbert module $J(\HM^{(\ell)})|_{\Delta}$ with the reproducing kernel $$JK_{\ell}|_{\Delta}(\ov{\bz},\ov{\bw})_{ij}=\left(\sum_{t=2}^m\del_t\right)^i\left(\sum_{t=2}^m\dbar_t\right)^jK_{\ell}(\ov{\bz},\ov{\bw})~~~\text{for}~~~\bz,\bw\in\Delta.$$ Therefore, the vector bundle $\mathscr{E}_{\ell}$ is unitarily equivalent to the jet bundle $J^{(n_1)}E_{\ell}|_{\Delta}\ra\Delta$. This completes the proof.
\end{proof}

\section{Appendix}
\subsection{Toy example I}
Let $\a,\b,\g>0$ and $\H^{(\a,\b,\g)}(\D^3)$ be the weighted Bergman module over $\D^3$ with the reproducing kernel 
$$K(\bz,\bw)=(1-z_1\ov{w}_1)^{-\a}(1-z_2\ov{w}_2)^{-\b}(1-z_3\ov{w}_3)^{-\g}~\bz,\bw\in\D^3.$$ Let us take the submodule $\H_0^{(2)}$ consisting of functions in $\H^{(\a,\b,\g)}(\D^3)$ vanishing of order $2$ along the diagonal set $\Delta=\{(z_1,z_2,z_3)\in\D^3:z_1=z_2=z_3\}$. Denote $\H_q^{(2)}$ as the quotient space $\H^{(\a,\b,\g)}(\D^3)\ominus \H^{(2)}_0$. It then follows from \cite{OUIOQM} that $\H^{(2)}_q$ is \rkhs with the reproducing kernel $$JK(z,w)=\begin{pmatrix}
         (1-z\ov{w})^2 & \b z(1-z\ov{w}) & \g z(1-z\ov{w})\\
          \b \ov{w}(1-z\ov{w}) & \b(1+\b z\ov{w}) & \b\g z\ov{w} \\
         \g \ov{w}(1-z\ov{w})  &    \b\g z\ov{w}  & \g(1+\g z\ov{w})\end{pmatrix}\times (1-z\ov{w})^{-\a-\b-\g-2}.$$ It can be seen that $\{e^{(p)}_1,e^{(p)}_2,e^{(p)}_3:p \in \N \cup \{0\}\}$ forms a basis for the quotient module $\H^{(2)}_q $ where
$$
e^{(p)}_1 \mapsto
\left(
\begin{smallmatrix}
{-(\a+\b+\g)\choose p}^{\frac{1}{2}}z^p\\
\a\sqrt{\frac{p}{\a+\b+\g}}{-(\a+\b+\g+1)\choose (p-1)}^{\frac{1}{2}}z^{p-1}\\
\b\sqrt{\frac{p}{\a+\b+\g}}{-(\a+\b+\g+1)\choose (p-1)}^{\frac{1}{2}}z^{p-1}\\
\end{smallmatrix}
\right),
~e^{(p)}_2  \mapsto
\left(
\begin{smallmatrix}
0\\
\frac{\a\b}{\sqrt{\b(\a+\g)}}\frac{1}{\sqrt{\a+\b+\g}}{-(\a+\b+\g+2)\choose (p-1)}^{\frac{1}{2}}z^{p-1}\\
\frac{\b\g}{\sqrt{\b(\a+\g)}}\frac{1}{\sqrt{\a+\b+\g}}{-(\a+\b+\g+2)\choose (p-1)}^{\frac{1}{2}}z^{p-1}\\
\end{smallmatrix}
\right)$$
$$\text{and}~~
e^{(p)}_3 \mapsto
\left(
\begin{smallmatrix}
0\\
\sqrt{\frac{\a\g}{\a+\g}}{-(\a+\b+\g+2)\choose (p-1)}^{\frac{1}{2}}z^{p-1}\\
-\sqrt{\frac{\a\g}{\a+\g}}{-(\a+\b+\g+2)\choose (p-1)}^{\frac{1}{2}}z^{p-1}\\
\end{smallmatrix}
\right).\\
$$

From the matrix of the compression of the multiplication operator $M_{z}$ onto $\H^{(2)}_q$ \w this orthonormal basis it is seen that $M_z$ is reducible. In fact, it is equivalent to the fact that the reproducing kernel $J^{(2)}K$ is equivalent to a $3\times 3$ block-diagonal matrix valued kernel. In other words, it is enough to find a $3\times 3$ invertible matrix $X$ such that $XJ^{(2)}KX^*$ is a $3\times 3$ block-diagonal matrix valued kernel.

In this regard, we first observe that $J^{(2)}K$ can be written as follows:
$$J^{(2)}K(z,w)=\left(\begin{smallmatrix}
         (1-z\ov{w}) & 0 & 0\\
          0 & 1 & 0 \\
         0 &    0  & 1\end{smallmatrix}\right)\left(\begin{smallmatrix}
         1 & 0 & 0\\
          \b \ov{w} & 1 & 0 \\
         \g \ov{w}  &    0  & 1\end{smallmatrix}\right)\left(\begin{smallmatrix}
         1 & 0 & 0\\
          0 & \b & 0 \\
         0 &    0  & \g\end{smallmatrix}\right)\left(\begin{smallmatrix}
         1 & \b z & \g z\\
          0 & 1 & 0 \\
         0 &    0  & 1\end{smallmatrix}\right)\left(\begin{smallmatrix}
         (1-z\ov{w}) & 0 & 0\\
          0 & 1 & 0 \\
         0 &    0  & 1 \end{smallmatrix}\right)\times (1-z\ov{w})^{-\a-\b-\g-2}.$$
Since the matrix $\left(\begin{smallmatrix}
         (1-z\ov{w}) & 0 & 0\\
          0 & 1 & 0 \\
         0 &    0  & 1\end{smallmatrix}\right)$ is a block diagonal matrix it is enough to have a $3\times 3$ matrix $X$ such that $X\exp{S\ov{w}}J^{(2)}K(0,0)\exp{S^*z}X^*$ is a block diagonal matrix where $S=\left(\begin{smallmatrix}
         0 & 0 & 0\\
          \b  & 0 & 0 \\
         \g   &    0  & 0 \end{smallmatrix}\right)$. It turns out that $X=\left(\begin{smallmatrix}
         1 & 0 & 0\\
          0 & 1 & 1 \\
         0 &  -\g & \b\end{smallmatrix}\right)$ serves the purpose. More precisely, we have that 
$$XJ^{(2)}K(z,w)X^*=\left(\begin{smallmatrix}
         (1-z\ov{w})^2 & (\b+\g)z(1-z\ov{w}) & 0\\
          (\b+\g)\ov{w}(1-z\ov{w}) & (\b+\g)(1+(\b+\g)z\ov{w}) & 0 \\
         0 &    0  & \b\g(\b+\g)\end{smallmatrix}\right)\times (1-z\ov{w})^{-\a-\b-\g-2}.$$

Recall that the adjoint of the multiplication operator $M_z$ on $\H^{(2)}_q$ corresponds a {\h}, say $J^{(2)}E|_{\Delta}\ra\Delta$, of rank $3$ with the hermitian structure given by the reproducing kernel $J^{(2)}K$. Then $K(.,\ov{\bw}),\del_2K(.,\ov{\bw}),\del_3K(.,\ov{\bw})$ form a holomorphic frame for $J^{(2)}E|_{\Delta}$. In this set up, we point out that $X$ defines a isometric isomorphism $\Phi^{(2)}_X$ of the vector bundle $J^{(2)}E|_{\Delta}$ as follows:
$$\Phi^{(2)}_X(K(.,\ov{\bw}))=K(.,\ov{\bw}),~\Phi^{(2)}_X(\del_2K(.,\ov{\bw}))=(\del_2+\del_3)K(.,\ov{\bw}),~\text{and}$$
$$\Phi^{(2)}_X(\del_3K(.,\ov{\bw}))=-\g\del_2K(.,\ov{\bw})+\b\del_3K(.,\ov{\bw}),~\text{for}~\bw\in\Delta.$$ 
Further, $\Phi^{(2)}_X(\del_3K(.,\ov{\bw}))$ is perpendicular to both $\Phi^{(2)}_X(K(.,\ov{\bw}))$ and $\Phi^{(2)}_X(\del_2K(.,\ov{\bw}))$. At this point, we observe that the subbundle determined the holomorphic frame $\{K(.,\ov{\bw}),(\del_2+\del_3)K(.,\ov{\bw}):\bw\in\Delta\}$ is associated to the quotient space obtained from the submodule of functions in $\H^{(\a,\b+\g)}(\D^2)\cong \H^{(\a,\b,\g)}(\D^3)|_{\Z}:=\{f|_{\Z}:f\in\H^{(\a,\b,\g)}(\D^3)\}$ vanishing of order $2$ along the diagonal set $\Delta\subset \Z\subset\D^3$ where $\Z=\{\bz\in\D^3:z_2=z_3\}$. 

Let us now consider the quotient Hilbert space $\H^{(3)}_q$ obtained from the submodule $\H^{(3)}_0$ consisting of functions in $\H^{(\a,\b,\g)}(\D^3)$ vanishing of order $3$ along the diagonal set $\Delta$. Then as above one can see, for $$X=\begin{pmatrix}
         1 & 0 & 0 & 0 & 0 & 0\\
          0 & 1 & 1 & 0 & 0 & 0\\
         0 &  0 & 0 & 1 & 2 & 1\\
         0 & -\g & \b & 0 & 0 & 0\\
         0 & 0 & 0 & -\g & \b-\g & \b\\
         0 & 0 & 0 & \g(\g+1) & -2(\b+1)(\g+1) & \b(\b+1)\end{pmatrix},$$ that $XJ^{(3)}KX^*$ becomes a block diagonal matix valued kernel. Moreover, the corresponding bundle isomorphism $\Phi^{(3)}_X:J^{(3)}E_{\Delta}\ra J^{(3)}E|_{\Delta}$ is given by the following formulas:
\Bea
\Phi^{(3)}_X(K(.,\ov{\bw}))&=& K(.,\ov{\bw})\\
\Phi^{(3)}_X(\del_2K(.,\ov{\bw})) &=& (\del_2+\del_3)K(.,\ov{\bw})\\
\Phi^{(3)}_X(\del_2K(.,\ov{\bw})) &=& (\del_2^2 + 2\del_2\del_3 + \del_3^2)K(.,\ov{\bw})\\
\Phi^{(3)}_X(\del^2_2K(.,\ov{\bw})) &=& (-\g\del_2+\b\del_3)K(.,\ov{\bw})\\
\Phi^{(3)}_X(\del_2\del_3K(.,\ov{\bw})) &=& (-\g\del_2^2 + (\b-\g)\del_2\del_3 + \b\del_3^2)K(.,\ov{\bw})\\
\Phi^{(3)}_X(\del^2_3K(.,\ov{\bw})) &=& ((\g)_2\del_2^2 - 2(\b+1)(\g+1)\del_2\del_3 + (\b)_2)K(.,\ov{\bw}).
\Eea
Set $\si_0(\bw)=K(.,\ov{\bw})$, $\si_1(\bw)=-\g\del_2K(.,\ov{\bw})+\b\del_3K(.,\ov{\bw})$ and $$\si_2(\bw)=((\g)_2\del_2^2 - 2(\b+1)(\g+1)\del_2\del_3 + (\b)_2)K(.,\ov{\bw})=\sum_{i=0}^2(-1)^i{2\choose i}\frac{(\b)_2(\g)_2}{(\b)_{2-i}(\g)_i}\del_2^{2-i}\del_3^iK(.,\ov{\bw}).$$
It then implies that the bundle isomorphism $\Phi_X^{(3)}$ breaks the jet bundle $J^{(3)}E|_{\Delta}$ into orthogonal direct sum of three holomorphic subbundles $E_0,E_1$ and $E_2$ with global holomorphic frames $\{\si_0(\bw),(\del_2+\del_3)\si_0(\bw),(\del_2+\del_3)^2\si_0(\bw)\}$, $\{\si_1(\bw),(\del_2+\del_3)\si_1(\bw)\}$ and $\{\si_2(\bw)\}$, respectively. We note that $E_0$ is the jet bundle associated to the quotient Hilbert space obtained from the submodule of functions in $\H^{(\a,\b+\g)}(\D^2)$ vanishing of order $3$ along the diagonal set $\Delta\subset \Z\subset\D^3$, $E_1$ is the jet bundle associated to the quotient Hilbert space obtained from the submodule of functions in $\H_K$ vanishing of order $2$ along the diagonal set $\Delta\subset \Z\subset\D^3$ with the kernel $K(\bz,\bw)$ on $\Z$ obtained by polarizing the identity $\norm{\si_1(\bw)}^2$ for $\bw\in\Z$, and $E_2$ is the line bundle associated to the \rkhs with the reproducing kernel obtained by polarizing $\norm{\si_2(\bw)}^2$. These observations lead us to consider the general case as presented in the previous section.

\subsection{Toy example II}
Let $\a,\b,\g,\d>0$ and $\H^{(\a,\b,\g,\d)}(\D^4)$ be the weighted Bergman module over $\D^4$ with the reproducing kernel 
$$K(\bz,\bw)=(1-z_1\ov{w}_1)^{-\a}(1-z_2\ov{w}_2)^{-\b}(1-z_3\ov{w}_3)^{-\g}(1-z_4\ov{w}_4)^{-\d}~\bz,\bw\in\D^4.$$ Let us take the submodule $\H_0^{(2)}$ consisting of functions in $\H^{(\a,\b,\g,\d)}(\D^4)$ vanishing of order $2$ along the diagonal set $\Delta=\{(z_1,z_2,z_3,z_4)\in\D^3:z_1=z_2=z_3=z_4\}$. Denote $\H_q^{(2)}$ as the quotient space $\H^{(\a,\b,\g,\d)}(\D^4)\ominus \H^{(2)}_0$. Then $\H^{(2)}_q$ is \rkhs with the reproducing kernel $$JK(z,w)=\begin{pmatrix}
         (1-z\ov{w})^2 & \b z(1-z\ov{w}) & \g z(1-z\ov{w}) & \d z(1-z\ov{w})\\
          \b \ov{w}(1-z\ov{w}) & \b(1+\b z\ov{w}) & \b\g z\ov{w} & \b\d z\ov{w}\\
      \g \ov{w}(1-z\ov{w})  &    \b\g z\ov{w}  & \g(1+\g z\ov{w}) & \g\d z\ov{w}\\
      \d \ov{w}(1-z\ov{w}) & \b\d z\ov{w} & \g\d z\ov{w} & \d(1+\d z\ov{w})
      \end{pmatrix}\times (1-z\ov{w})^{-\a-\b-\g-2}.$$ A similar computation as above yields that $XJ^{(2)}KX^*$ is a block diagonal matrix valued kernel where $X$ is the $4\times 4$ matrix 
$$X=\begin{pmatrix}
         1 & 0 & 0 & 0\\
          0 & 1 & 1 & 1\\
         0 &  -(\g+\d) & \b & \b\\
         0 & 0 & -\d & \g\end{pmatrix}.$$ So this $X$ corresponds a isometric bundle isomorphism $\Phi_X:J^{(2)}E|_{\Delta}\ra J^{(2)}E|_{\Delta}$ which acts on the global holomorphic frame $\{K(.,\ov{\bw}),\del_2 K(.,\ov{\bw}),\del_3 K(.,\ov{\bw}),\del_4 K(.,\ov{\bw})\}$ for the bundle $J^{(2)}E|_{\Delta}\ra\Delta$ as follows:
\Bea
\Phi_X(K(.,\ov{\bw})) &=& K(.,\ov{\bw})\\
\Phi_X(\del_2 K(.,\ov{\bw})) &=& (\del_2+\del_3+\del_4)K(.,\ov{\bw})\\
\Phi_X(\del_3 K(.,\ov{\bw})) &=& (-(\g+\d)\del_2+\b(\del_3+\del_4))K(.,\ov{\bw})\\
\Phi_X(\del_4 K(.,\ov{\bw})) &=& (-\d\del_3+\g\del_4)K(.,\ov{\bw}).
\Eea
Let $\Z_1=\{\bz\in\D^4:z_2=z_3=z_4\}$ and $\Z_2=\{\bz\in\D^4:z_3=z_4\}$. So we have that $\Delta\subset\Z_1\subset\Z_2\subset\D^4$. Set $\si_0(\bw)=K(.,\ov{\bw})$ and $\si_1(\bw)=(-\d\del_3+\g\del_4)K(.,\ov{\bw})$ for $\bw\in\Z_2$.   It then turns out that $\<\si_0(\bw),\si_1(\bw)\>=0$ for $\bw\in\Z_2$. Consequently, both $\del_2\si_0(\bw)$ and $(\del_3+\del_4)\si_0(\bw)$ are also perpendicular to $\si_1(\bw)$. Thus, it shows that $$J^{(2)}E|_{\Delta}=\hat{E}_0|_{\Delta}\oplus E_1|_{\Delta}$$ where $\hat{E}_0|_{\Delta}$ is the {\h} over $\Delta$ of rank $3$ with the global holomorphic frame $\{\si_0(\bw),\del_2\si_0(\bw),(\del_3+\del_4)\si_0(\bw)\}$ and $E_1|_{\Delta}$ is the {\hl} over $\Delta$ with global holomorphic frame $\{\si_1(\bw)\}$. Now we note that if we identify $\Z_2$ with $\D^3$ via the biholomorphism $\phi:\D^3\ra\Z_2$ defined by $\phi(z_1,z_2,z_3)=(z_1,z_2,z_3,z_3)$ then the diagonal subset of $\D^3$ gets mapped onto the set $\Delta$ and $\phi_*(\del_2)=\del_2$ and $\phi_*(\del_3)=\del_3+\del_4$. Therefore, the vector bundle $\hat{E}_0$ is nothing else but the jet bundle $J^{(2)}E_0|_{\Delta}$ of the {\h} $E_0\ra\Z_2$ relative to $\Delta$ where $E_0\ra\Z_2$ is the vector bundle associated to the adjoint of the tuple of multiplication operators on $\H^{(\a,\b,\g+\d)}(\D^3)=\H^{(\a,\b\g,\d)}(\D^4)|_{\Z_2}:=\{f|_{\Z_2}:f\in \H^{(\a,\b\g,\d)}(\D^4)\}$. Thus, rewriting the equation above we have that $$J^{(2)}E|_{\Delta}=J^{(2)}E_0|_{\Delta}\oplus E_1|_{\Delta}.$$ 

Now as above one can see that $K(.,\ov{\bw})$ and $(-(\g+\d)\del_2+\b(\del_3+\del_4))K(.,\ov{\bw})$ are perpendicular for $\bw\in\Z_1$. Consequently, $(\del_2+\del_3+\del_4)K(.,\ov{\bw})$ is also orthogonal to $(-(\g+\d)\del_2+\b(\del_3+\del_4))K(.,\ov{\bw})$. Thus, we have that $$J^{(2)}E_0|_{\Delta}=\hat{F}_0|_{\Delta}\oplus F_1|_{\Delta}$$ where $\hat{F}_0|_{\Delta}\ra\Delta$ is the vector bundle with global holomorphic frame $\{K(.,\ov{\bw}),(\del_2+\del_3+\del_4)K(.,\ov{\bw})\}$ and $\hat{F}_0|_{\Delta}$ is the line bundle with global holomorphic frame $\{(-(\g+\d)\del_2+\b(\del_3+\del_4))K(.,\ov{\bw})\}$. Again as above we observe that $\hat{F}_0|_{\Delta}\ra\Delta$ is the jet bundle $J^{(2)}F_0|_{\Delta}\ra\Delta$ of the vector bundle $F_0$ associated to the adjoint of the tuple of multiplication operators on $\H^{(\a,\b+\g+\d)}(\D^2):=\H^{(\a,\b,\g,\d)}(\D^4)|_{\Z_1}=\{f|_{\Z_1}:f\in \H^{(\a,\b,\g,\d)}(\D^4)\}$. Thus, we have that
$$J^{(2)}E|_{\Delta}=J^{(2)}F_0|_{\Delta}\oplus F_1|_{\Delta}\oplus E_1|_{\Delta}.$$

\bibliographystyle{plain}

\end{document}